\date{}
   \newcommand{\beq}{\begin{equation}}
   \newcommand{\eeq}{\end{equation}}
   \newcommand{\beqs}{\arraycolsep1.5pt\begin{eqnarray}}
   \newcommand{\eeqs}{\end{eqnarray}\arraycolsep5pt}
   \newcommand{\beqsn}{\arraycolsep1.5pt\begin{eqnarray*}}
   \newcommand{\eeqsn}{\end{eqnarray*}\arraycolsep5pt}
\newtheorem{pft}{}                              
\newcommand{\bpf}{\begin{pft}\begin{proof}\mbox{\bf Proof}.\ \rm}
\newcommand{\bpfof}[1]{\begin{pft}\begin{proof}{\bf Proof of #1}\
\rm}
\newcommand{\bspf}{\begin{pft}\begin{proof}{\bf Sketch of proof}.\
\rm}
\newcommand{\epf}{\nopagebreak\end{proof}\end{pft}}
\newtheorem{thm}{Theorem}[section]
\newtheorem{rem}[thm]{Remark}
\newtheorem{cor}[thm]{Corollary}
\newtheorem{prop}[thm]{Proposition}
\newtheorem{defn}[thm]{Definition}
\newcommand{\dis}{\displaystyle}
\def\Bbb#1{{\fam\msbfam\relax#1}}
\def\R{{\mathbb R}}
\def\N{{\Bbb N}}
\newcommand{\ds}{\rightarrow}
\newcommand{\ee}{\varepsilon}
\newcommand{\la}{\lambda}
\newcommand{\Om}{\Omega}
\newcommand{\f}{\varphi}
\def\B{{\cal B}}
\def\F{{\cal F}}
\def\L{{\cal L}}
\newcommand{\ninf}{_{n \ds \infty}}
\def\supess{\mathop{\rm ess\: sup }}
\newcommand{\dy}{\,dy}
\def\supess{\mathop{\rm ess\: sup }}
\font\tenmsb=msbm10 \font\sevenmsb=msbm7 \font\fivemsb=msbm5
\def\Bbb#1{{\fam\msbfam\relax#1}}
\numberwithin{equation}{section}
\def\to{\rightarrow}
\title{A relaxation result in the vectorial setting and $L^p$-approximation  for $L^\infty$-functionals.}
\author{{\sc Francesca Prinari}\\
	Dip. di Matematica e Informatica \\ Universit\`a di Ferrara \\ Via Machiavelli 35\\
	44121 Ferrara (Italy) 
	\\
	and\\
	{\sc Elvira Zappale} \\
Dip. di Ingegneria Industriale 
\\
Universit\`a degli Studi di Salerno\\
Via Giovanni Paolo II, 132\\
84084 Fisciano (SA) (Italy)}
\begin{document}
\maketitle
\begin{abstract}
	We provide relaxation for not lower semicontinuous supremal functionals of the type $W^{1,\infty}(\Omega;\mathbb R^d) \ni u \mapsto\supess_{ x \in \Omega}f(\nabla u(x))$ in the vectorial case, where $\Omega\subset \mathbb R^N$ is a Lipschitz, bounded open set, and $f$ is level convex. The connection with indicator functionals is also enlightened, thus extending previous lower semicontinuity results in that framework. Finally we discuss the $L^p$-approximation of supremal functionals, with  non-negative, coercive densities $f=f(x,\xi)$, which are only $\L^N \otimes \B_{d \times N}$-measurable.  
\end{abstract}
\bigskip\noindent{{\it Keywords: supremal functionals, relaxation, level convexity, $\Gamma$-convergence.} 

MSC 2010: 49J45, 26B25, 47J22}

\section{Introduction}\label{intro}

Recently a great attention has been devoted to supremal functionals, i.e. functionals of the type 
\begin{align}\label{supess}
W^{1,\infty}(\Omega;\mathbb R^{d})\ni u\to F(u):=\supess_{ x \in \Omega}f(\nabla u(x)),
\end{align}
$\Omega$ being a bounded open set of $\R^N$ with Lipschitz boundary, and to their connections with partial differential equatons such as $\infty$-harmonic, $\infty$-biharmonic equations or Hamilton-Jacobi ones, also in light of the many applications to optimal transport, continuum mechanics, see for instance \cite{ABM, BBJ0, BBJ, BM1, BGP, K, KP, KPr, KS} among a wider literature. 
Many of the above questions can be formulated in terms of suitable minimization problems involving \eqref{supess}, and the direct methods have been proven to be a powerful tool to provide solutions. 
A crucial property to ensure the existence of minimizers is the lower semicontinuity of the functional $F$ in \eqref{supess}.
This in turn reflects in necessary and sufficient conditions on the supremand $f$. Such analysis started in the scalar case $d$ or $N=1$   in  \cite{BJ} and \cite{ABP}, and later extended in \cite{BJW, P08, P09} and so far a complete characterization is given: $F$ is weakly* sequentially lower semicontinuous if and only if  $f$ is lower semicontinuous and level convex, i.e. its sublevel sets \begin{equation}\label{Ela}
L_\lambda(f):=\{\xi \in \R^{d\times N}: f(\xi)\leq \lambda\},
\end{equation} 
are closed and convex. 

When the problem is truly vectorial, lower semicontinuity and  level convexity of the supremand $f$ are just sufficient conditions but no longer necessary. The notion which has been proven to be necessary and sufficient for weak*  sequential lower semicontinuity of $F$  in $W^{1,\infty}(\Omega;\mathbb R^{d\times N})$ is
{\it strong Morrey quasiconvexity}, introduced by \cite{BJW}, which reads as follows.
A Borel measurable function $f : \R^{d \times N}\to  \R$ is said to be
{\it strong Morrey quasiconvex} if for any $\varepsilon >0$, for any $\xi \in \R^{d \times N}$, and for any $K > 0$,
there exists a $\delta = \delta(\varepsilon,K, \xi) > 0$ such that if $\varphi \in W^{1,\infty}(Q;\R^d)$ satisfies
$
\|\nabla \varphi\|_{L^\infty(Q)}\leq K, \hbox{ and }
\max_{x \in \partial Q}
|\varphi (x)| \leq \delta,$
then
\begin{equation}\label{sMqcx}
f(\xi) \leq \supess_{x\in Q}f(\xi + \nabla \varphi(x)) + \varepsilon,
\end{equation}
where $Q$ denotes the cube $]0,1[^N$.
This notion is quite difficult to be verified in practice and stronger notions (but weaker than level convexity) have been introduced in order to ensure lower semicontinuity to supremal functionals or approximate them through integral functionals (see \cite{AP0, BJW, CDPP}). 

 Clearly if such conditions fail to be satisfied by the supremand $f$, one has to look for the best weak* lower semicontinuous functional $\Gamma_{w^*}(F)$, which approximates $F$ in the sense of admitting the same minimal values (see Theorem \ref{convmin} below). 
 The results available in literature are very satisfactory and quite exhaustive  in the case $d=1$  or $N =1$ when $F$ satisfies a coercivity assumption:
 in this case, $\Gamma_{w^*}(F)=\Gamma_{w^*_{seq}}(F)$ where $w^*_{seq}$ is the weak* sequential  topology   on $W^{1,\infty}(\Omega;\R^d)$.
 In \cite{BL} and \cite{P09} a complete representation formula for  the relaxed functional $\Gamma_{w^*}(F)$ 
  is given when $f=f(x,\xi)$ is a globally continuous function; in \cite{GPP}  the authors discuss the finslerian case and represent  $\Gamma_{w^*}(F)$   as a difference quotient; in  \cite{P09}  it is shown that  $\Gamma_{w*}(F)$ is level convex
 when $f$ is a  Carath\'eodory function satisfying  the additional assumption $f(x,\xi)=f(x,-\xi)$. In \cite{GP} the last assumption is dropped  and  the level convexity of the relaxed functional  is proved  for  a class of discontinuos supremand, not even coercive.
 Despite of all these scalar results,  very little is known in the vectorial setting, up to some sufficient conditions and in  particular cases (see \cite{AP, AP0, BJW, CDPP}).
 
 The first aim of this paper consists  in providing a relaxation result for a  class of supremal functionals when $d$ and $N>1$.  In the vectorial case we compute the lower semicontinuous envelope of $F$ in \eqref{supess}   with respect to the weak* topology when the supremand $f$ is level convex and  only  Borel measurable. We remark that level convex functions are not lower semicontinuous by definition (see \cite{RZ}).
 Our main result (which clearly holds also in the scalar case), is  the following:

 \begin{thm}\label{relax1}Let $\Omega$ be a bounded  open set of $\R^N$ with Lipschitz boundary and let $f: \R^{d\times N}\to \overline\R$ be a  Borel function such that \\
 
 $(H)$ for every $\la> \inf_{\mathbb R^{d \times N}}f$ the sublevel set $L_{\la}(f)$ in \eqref{Ela} is convex and  has nonempty interior.\\

\noindent Let $F:W^{1,\infty}(\Omega;\R^d)\to{\overline \R} $ be the supremal functional in \eqref{supess}. 
 	Then it holds
 	\beq\label{relrep}\Gamma_{w*}(F)(u)=\Gamma_{w^*_{seq}}(F)(u)=\supess_{x \in \Omega} f^{ls}(\nabla u(x)) \quad  \hbox{ for every  } u\in W^{1,\infty}(\Omega;\R^d)
	\eeq
	
 	 where $f^{ls}$ denotes the lower semicontinuous envelope of $f$.
 \end{thm}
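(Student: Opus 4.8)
The plan is to show the chain of inequalities
\[
G(u)\ \le\ \Gamma_{w^*}(F)(u)\ \le\ \Gamma_{w^*_{seq}}(F)(u)\ \le\ G(u),\qquad G(u):=\supess_{x\in\Omega}f^{ls}(\nabla u(x)),
\]
for every $u\in W^{1,\infty}(\Omega;\R^d)$, which at once yields \eqref{relrep}. The middle inequality is automatic, since every weak* lower semicontinuous functional is also weak* sequentially lower semicontinuous, hence $\Gamma_{w^*}(F)\le\Gamma_{w^*_{seq}}(F)$. The first inequality will follow once we know that $G$ is weak* lower semicontinuous and $G\le F$, because then $G$ is one of the competitors in the definition of $\Gamma_{w^*}(F)$. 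The last inequality is the construction of a recovery sequence. I would split the proof accordingly.

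\emph{Step 1: $f^{ls}$ inherits $(H)$.} From the pointwise formula $f^{ls}(\xi)=\sup_{r>0}\inf_{|\eta-\xi|<r}f(\eta)$ one checks the identity $L_\lambda(f^{ls})=\bigcap_{\mu>\lambda}\overline{L_\mu(f)}$ for every $\lambda\in\R$. By $(H)$ the sets $L_\mu(f)$ are convex for $\mu>\inf f$, hence so are their closures and the above intersection; moreover $L_\lambda(f)\subseteq L_\lambda(f^{ls})$, so $L_\lambda(f^{ls})$ has nonempty interior for $\lambda>\inf f$, and $\inf f^{ls}=\inf f$. Thus $f^{ls}$ is lower semicontinuous (by construction), level convex, and still satisfies $(H)$; in particular $G\le F$, since $f^{ls}\le f$ pointwise.

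\emph{Step 2: $G$ is weak* lower semicontinuous.} Since $f^{ls}$ is lower semicontinuous and level convex, the sufficiency part of the known lower semicontinuity criteria for supremal functionals (see \cite{BJW, P09}) gives that $G$ is weak* \emph{sequentially} lower semicontinuous on $W^{1,\infty}(\Omega;\R^d)$. To upgrade this to topological weak* lower semicontinuity I would argue on sublevel sets: by Step 1, for every $c$ the set $\{u:G(u)\le c\}=\{u\in W^{1,\infty}(\Omega;\R^d):\nabla u(x)\in L_c(f^{ls})\text{ for a.e.\ }x\}$ is convex (as $L_c(f^{ls})$ is convex) and weak* sequentially closed (as $G$ is weak* sequentially lower semicontinuous). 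Identifying $W^{1,\infty}(\Omega;\R^d)$, endowed with the weak* topology, with a weak* closed subspace of $(L^1(\Omega;\R^d)\times L^1(\Omega;\R^{d\times N}))^*$, i.e. with the dual of a separable Banach space, the weak* topology is metrizable on bounded sets and the Krein--\v{S}mulian theorem yields that these convex sublevel sets are weak* closed. Hence $G$ is weak* lower semicontinuous and, being a minorant of $F$, satisfies $G\le\Gamma_{w^*}(F)$. I expect this passage from sequential to topological lower semicontinuity — equivalently, the identity $\Gamma_{w^*}(F)=\Gamma_{w^*_{seq}}(F)$ in the absence of any coercivity — to be the main point, and it is precisely here that the convexity of the sublevel sets granted by $(H)$ is used.

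\emph{Step 3: recovery sequence.} Let $u$ be such that $G(u)<+\infty$ (otherwise $\Gamma_{w^*_{seq}}(F)(u)\le G(u)$ trivially), and fix $\mu>G(u)$; then $\mu>\inf f$. By Step 1 there is $\varepsilon>0$ with $G(u)+\varepsilon<\mu$ and $\nabla u(x)\in L_{G(u)+\varepsilon}(f^{ls})\subseteq\overline{L_\mu(f)}$ for a.e.\ $x$. Using $(H)$, pick $\xi_0\in\mathrm{int}\,L_\mu(f)$; since $L_\mu(f)$ is convex, $\xi_0$ is interior to the closed convex set $\overline{L_\mu(f)}$, whence
\[
(1-t)\,\nabla u(x)+t\,\xi_0\ \in\ \mathrm{int}\,\overline{L_\mu(f)}=\mathrm{int}\,L_\mu(f)\subseteq L_\mu(f)\qquad\text{for every }t\in(0,1]\text{ and a.e.\ }x.
\]
Setting $w_t(x):=(1-t)\,u(x)+t\,\xi_0\,x$, one has $\|w_t-u\|_{W^{1,\infty}(\Omega;\R^d)}=t\,\|\xi_0\,x-u\|_{W^{1,\infty}(\Omega;\R^d)}\to0$ as $t\to0^+$, so $w_{t_n}\wksto u$ along any sequence $t_n\downarrow0$, while $f(\nabla w_t(x))\le\mu$ for a.e.\ $x$, i.e. $F(w_t)\le\mu$. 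By definition of the sequential relaxation, $\Gamma_{w^*_{seq}}(F)(u)\le\liminf_n F(w_{t_n})\le\mu$; letting $\mu\downarrow G(u)$ gives $\Gamma_{w^*_{seq}}(F)(u)\le G(u)$ and closes the chain.
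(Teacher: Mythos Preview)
Your proof is correct and takes a genuinely different, more elementary route than the paper for the recovery-sequence step. The lower bound (your Steps~1--2) is handled essentially the same way in the paper: the paper invokes \cite[Remark 4.4]{ABP} to obtain directly that $u\mapsto\supess_{x\in\Omega}f^{ls}(\nabla u(x))$ is weak*-lower semicontinuous, while you reach the same conclusion via the convexity of the sublevel sets and a Krein--\v{S}mulian argument (which is also the content of Corollary~\ref{weakseq} in the paper). The substantive divergence is in the upper bound. The paper does not construct a recovery sequence directly; instead it passes through indicator functionals: it invokes Theorem~\ref{AHM} (the relaxation result of Anza Hafsa--Mandallena for $\int_\Omega I_C(\nabla u)\,dx$ with $C$ open, bounded, convex, $\underline 0\in C$) applied to $C={\rm int}\,L_{\lambda+\varepsilon}(f)$, and then removes the auxiliary hypotheses needed for that theorem through a four-step cascade (add coercivity $f\ge\alpha|\xi|$; reduce to $\min f=f(\underline 0)=0$; remove the assumption that the minimum is attained; finally treat $\inf f=-\infty$). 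Your Step~3 bypasses all of this: once $\mu>G(u)$ you pick $\xi_0\in{\rm int}\,L_\mu(f)$ (nonempty by $(H)$) and use the elementary convex-analysis fact that the open segment from a point of $\overline{L_\mu(f)}$ towards $\xi_0$ lies in ${\rm int}\,\overline{L_\mu(f)}={\rm int}\,L_\mu(f)\subseteq L_\mu(f)$, so that $w_t=(1-t)u+t\,\xi_0 x$ converges to $u$ \emph{strongly} in $W^{1,\infty}$ with $F(w_t)\le\mu$. This single argument works uniformly for all cases, including $G(u)=-\infty$ (where $\inf f=-\infty$ and every real $\mu$ is admissible), so no coercivity, no translations, no truncations are needed. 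What you gain is a self-contained, considerably shorter proof that does not rely on the external result \cite{AHM}; what the paper's route buys is that it simultaneously develops the connection between $\Gamma_{w^*}(F)$ and the relaxation of indicator functionals, which is then exploited to derive Corollary~\ref{unbddint}.
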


Note that $(H)$  is satisfied by a  wide class of discontinuous functions. For instance, it is satisfied by Borel level convex functions $f$  having an absolute minimum point $\bar \xi$ such that
$
 f(\overline{\xi})= \lim_{\xi\to \overline{ \xi}}f(\xi).
$

 Moreover in Corollary \ref{corfls} we show that if $f$ satisfies $(H)$ then $f^{ls}$  is the greatest strong Morrey quasiconvex function less  than or equal to  $f$.
 

%
%
%
%
%

Note that the class of Borel level convex functions is strictly contained in the class  of Borel functions $f$ (called {\sl weak Morrey quasiconvex}) satisfying 
$$
f(\xi) \leq \supess_{x\in Q}f(\xi + \nabla \varphi(x)),\quad \forall  \varphi \in W^{1,\infty}_0(Q;\R^d).
$$
 Differently from  strong Morrey quasiconvex functions which are lower semicontinuous (see \cite[Proposition 2.5]{BJW}), weak Morrey quasiconvex functions do not necessarily satisfy  this property.
  On the other hand, the representation result by means of $f^{ls}$ does not hold if we weaken the level convexity assumption on $f$, by  requiring that $f$ is 
only weak Morrey quasiconvex. Indeed, \cite[Example 2.7]{PNodea} exhibits  a weak Morrey quasiconvex function $f=f^{ls}$ that cannot represent the relaxed functional since it is not strong Morrey quasiconvex.\\

In order to prove Theorem \ref{relax1}, a key tool is the description of the level sets of the envelopes of the densities $f$, that is accomplished in  
Section \ref{prelim}. Indeed, after providing in Proposition \ref{flcbuild} a characterization of the level convex envelope of functions defined in generical vector spaces $(X,\tau)$, we specialize the result, giving  a complete representation formula of the sublevel sets of  $f^{lslc}$ in terms of closures and convexifications  of the sublevel sets of $f$ (see Proposition \ref{properties}). For computational counterpart in the continuous and bounded case we refer to \cite{AO} while in the nonlocal setting formulas analogous to \eqref{Flcrep} can be found in \cite{KZ}.\\

We also underline that, despite of the results currently available in the literature,  in the set of hypothesis of Theorem \ref{relax1}  we  drop any coercivity assumptions on $f$ thanks to arguments as in \cite[Theorem 3.1]{GP}.
 On the other hand,  the proof of  representation formula \eqref{relrep}  is given under homogeneity assumptions on the density $f$ since it relies on a particular case of \cite[Theorem 2.1]{AHM} (see Theorem \ref{AHM} below). Indeed a central role plays the connection with homogeneous indicator functionals of bounded convex sets with nonempty interior, as already emphasized in similar context by \cite{BJW} and later exploited in \cite{BGP}, and very recently in \cite{KZ, KZ2} in the nonlocal framework. 
 In turn, Theorem \ref{relax1} allows us to generalize some relaxation results for indicator functionals or, equivalently, improves the understanding of the asymptotics for vectorial differential inclusions (cf. Corollary \ref{unbddint} below). The interest in this type of functionals is motivated by the many applications: we refer to \cite{CDeA} and the references therein for the scalar case, to \cite{W0, W1} for multidimensional control problems,  to \cite{DG} for homogenization, to \cite{BPZ, Z0, Z} for the analysis of thin structures, and to \cite{BKS}, and the bibliography contained therein for the applications in continuum mechanics.

Motivated by the connection with PDEs and norm approximation, the last section of our paper is devoted to  an $L^p$-approximation theorem which  applies to a more general class of densities $f=f(x,\xi)$. Our result generalizes  \cite[Theorem 3.2]{PNodea}, since, under the same growth conditions, we just require measurability for $f$.


%
%
%
%
%

\begin{thm}\label{curlcase2} Let $\Omega\subseteq \R^N$ be a bounded open set with Lipschitz boundary.
	Let $f:\Omega\times \R^{d\times N}\to[0,+\infty)$ be a $\L^N \otimes \B_{d\times N}$-measurable function satisfying the following  growth condition:
	there exist $\beta\geq\alpha>0$ such that
	\beq\label{fcrescita2} 
	\frac{1}{\alpha}|\xi|-\alpha\leq f(x,\xi) \leq \beta(1+|\xi|) \qquad \hbox{ for a. e. }x \in \Omega \hbox{ and for every }
	\xi\in \R^{d\times N}.
	\eeq
	For every $p\geq 1$ let  
	$F_p: C(\bar \Omega;\R^d)\to [0,+\infty)$  be the functional given by 
	\beq\label{curl2} F_p(u):=\left\{\begin {array}{cl}
	\displaystyle \left( \int_{\Om} f^p(x,\nabla u(x))dx \right)^{1/p}
	&  \hbox{if } \, u\in W^{1,p}(\Omega;\R^d),\\
	+\infty  & \hbox{otherwise}.
\end{array}\right.
\eeq
\noindent Then there exists a $\L^N \otimes \B_{d\times N}$-measurable function $f_{\infty}:\Omega\times  \R^{d\times N}\to [0,+\infty)$ such that    $(F_p)_{p\geq 1}$ $\Gamma(L^{\infty})$-converges, as $p\to\infty$, to the functional $\bar F:C(\bar \Omega;\R^d)\to \overline \R$ defined as

 \begin{equation}\label{Finfty2}
\bar F(u):=\left\{\begin {array}{cl} \displaystyle \supess_{x\in \Om}
f_{\infty}(x,\nabla u(x))
&  \hbox{if } \, u\in W^{1,\infty}(\Omega;\R^d),\\
+\infty  & \hbox{otherwise.}
\end{array}\right.
\end{equation}
Moreover for a.e. $x\in \Omega$ $f_{\infty}(x,\cdot)$  is a strong Morrey quasiconvex function satisfying 
\beq\label{comparison}f_{\infty}(x,\cdot)\geq Q_{\infty}f(x,\cdot):=\sup_{p\geq 1} (Qf^p)^{1/ p}(x,\cdot),
\eeq
 where $Qf^p(x,\cdot):=Q(f^p)(x,\cdot)$ stands for the quasiconvex envelope of $f^p(x,\cdot)$ (cf. \eqref{Qg}).

\end{thm}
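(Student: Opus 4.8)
The strategy is to follow the scheme of \cite[Theorem 3.2]{PNodea} — localize the functionals, identify the localized $\Gamma$-limit as a supremal measure, and invoke an abstract supremal representation theorem — while carefully tracking measurability in order to replace continuity of $f$ by mere $\L^N\otimes\B_{d\times N}$-measurability. As a first step I would obtain the \emph{existence} of the $\Gamma(L^\infty)$-limit by a soft argument: for $1\le q\le p$ and $u\in W^{1,p}(\Omega;\R^d)$, H\"older's inequality gives $F_q(u)\le|\Omega|^{1/q-1/p}F_p(u)$, so the family $p\mapsto|\Omega|^{-1/p}F_p$ is nondecreasing; since $C(\bar\Omega;\R^d)$ with the sup norm is a separable metric space, a nondecreasing family $\Gamma$-converges, and because $|\Omega|^{-1/p}\to1$ the family $(F_p)_{p\ge1}$ itself $\Gamma(L^\infty)$-converges to some functional $\bar F$ (alternatively, extract a $\Gamma$-convergent subsequence by compactness and check independence of the subsequence a posteriori). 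Using the growth condition \eqref{fcrescita2} — the lower bound to keep $\|\nabla u_p\|_{L^p}$ bounded along sequences of bounded energy (whence, letting the integrability exponent tend to $+\infty$, the limit lies in $W^{1,\infty}$), the upper bound to build constant recovery sequences — one checks that $\bar F$ is finite exactly on $W^{1,\infty}(\Omega;\R^d)$.

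To put $\bar F$ in supremal form I would localize: set $F_p(u,A):=\left(\int_A f^p(x,\nabla u)\,dx\right)^{1/p}$ for open $A\subseteq\Omega$, and, via the De Giorgi--Letta criterion together with the $\Gamma$-compactness/monotonicity above, obtain the localized $\Gamma$-limit $\bar F(u,\cdot)$. The crucial structural fact is that $\bar F(u,\cdot)$ is a \emph{supremal measure}: it is increasing and inner regular (standard), and sup-additive, $\bar F(u,A\cup B)=\max\{\bar F(u,A),\bar F(u,B)\}$; the nontrivial inequality ``$\le$'' is obtained by gluing, with a De Giorgi slicing argument in the overlap $A\cap B$, the recovery sequences relative to $A$ and to $B$, exploiting that $\left(\int_{A\cup B}\cdot\right)^{1/p}\le 2^{1/p}\max\{(\int_A\cdot)^{1/p},(\int_B\cdot)^{1/p}\}$ and $2^{1/p}\to1$. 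Combining this with the weak*-lower semicontinuity of $\bar F(\cdot,A)$ on $W^{1,\infty}$ — which follows from $\Gamma(L^\infty)$-convergence because on norm-bounded subsets of $W^{1,\infty}$ weak* and uniform convergence coincide (Arzel\`a--Ascoli) — and with the growth bounds, an abstract representation theorem for supremal functionals (of the type used in \cite{PNodea,GPP}) yields $\bar F(u,A)=\supess_{x\in A}f_\infty(x,\nabla u(x))$, where the density is recovered by blow-up on affine maps, $f_\infty(x_0,\xi):=\lim_{\rho\to0}\bar F(\ell_\xi,B_\rho(x_0))$ with $\ell_\xi(x)=\xi x$. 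The $\L^N\otimes\B_{d\times N}$-measurability of $f_\infty$ — the point at which one departs from \cite{PNodea} — follows from this formula: for fixed $\xi$ the map $x\mapsto\bar F(\ell_\xi,B_\rho(x))$ is Borel, so $f_\infty(\cdot,\xi)$ is measurable, while $f_\infty(x,\cdot)$ is lower semicontinuous; being a normal integrand it is jointly measurable. Throughout one uses that for merely $\L^N\otimes\B$-measurable $f$ the envelopes $Qf^p$ are still Carath\'eodory with $p$-growth (the infimum defining $Qf^p(\cdot,\xi)$ can be taken over a countable family, and $Qf^p(x,\cdot)$ is locally Lipschitz), so all the relaxation and semicontinuity arguments are unaffected.

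The last two claims are then routine. That $f_\infty(x,\cdot)$ is strong Morrey quasiconvex for a.e.\ $x$ follows from the weak*-l.s.c.\ of $\bar F(\cdot,A)$ and the necessity direction of \cite{BJW}, applied after freezing $x=x_0$ (a standard localization of the density). For the comparison \eqref{comparison}, fix $q\ge1$: applying $F_q\le|\Omega|^{1/q-1/p}F_p$ along a recovery sequence for $\bar F$ gives $\overline{F_q}(u)\le|\Omega|^{1/q}\bar F(u)$, where $\overline{F_q}$ is the $L^\infty$-lower semicontinuous envelope of $F_q$; and the easy direction of relaxation — $f^q\ge Qf^q$ pointwise, together with the weak-$W^{1,q}$ lower semicontinuity of the quasiconvex Carath\'eodory functional $\int Qf^q(x,\nabla u)\,dx$ — gives $\overline{F_q}(u)\ge\left(\int_\Omega Qf^q(x,\nabla u)\,dx\right)^{1/q}$. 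Localizing both estimates on balls $B_\rho(x_0)$, taking $u=\ell_\xi$ and dividing by $|B_\rho(x_0)|^{1/q}$, one obtains $\left(\frac1{|B_\rho(x_0)|}\int_{B_\rho(x_0)}Qf^q(x,\xi)\,dx\right)^{1/q}\le\supess_{x\in B_\rho(x_0)}f_\infty(x,\xi)$; letting $\rho\to0$ at Lebesgue points of $Qf^q(\cdot,\xi)$ and using the blow-up formula for $f_\infty$ yields $(Qf^q)^{1/q}(x,\xi)\le f_\infty(x,\xi)$ for a.e.\ $x$ and every $\xi$ (first for a countable dense set of $\xi$, then for all $\xi$ by continuity of $Qf^q(x,\cdot)$ and lower semicontinuity of $f_\infty(x,\cdot)$). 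Taking the supremum over $q$ gives \eqref{comparison}.

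\textbf{Main obstacle.} The heart of the argument is the supremal representation of the localized $\Gamma$-limit: proving sup-additivity through the slicing/gluing of recovery sequences, and arranging the abstract representation theorem so that the density $f_\infty$ comes out genuinely $\L^N\otimes\B_{d\times N}$-measurable. This measurability bookkeeping — unnecessary in \cite{PNodea}, where $f$ is continuous — is the only new ingredient, and it is what requires care at essentially every step where an envelope, an infimum over competitors, or a blow-up is taken.
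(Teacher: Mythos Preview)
Your route is plausible but substantially heavier than the paper's, and it differs in where the work is done. You attack the $\Gamma$-limit $\bar F$ directly: localize, prove it is a supremal measure, invoke an abstract supremal representation theorem, and extract $f_\infty$ by blow-up, worrying about joint measurability at the end. The paper instead works at the level of each $F_p$ first. By Buttazzo's relaxation theorem (\cite[Theorem 4.4.1]{B}, stated as Theorem \ref{relaxint2}), for every $p$ there is a \emph{Carath\'eodory} integrand $\tilde{f^p}\ge Qf^p$, quasiconvex in $\xi$, representing the sequential weak relaxation of $u\mapsto\int_\Omega f^p(x,\nabla u)\,dx$ on $W^{1,p}$; a short argument using \eqref{fcrescita2} and Rellich--Kondrachov shows that for $p>N$ this coincides with $\Gamma_{L^\infty}(F_p)$ on $C(\bar\Omega;\R^d)$. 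One then \emph{defines} $f_\infty(x,\xi):=\sup_{p\ge1}(\tilde{f^p})^{1/p}(x,\xi)$ --- automatically $\L^N\otimes\B_{d\times N}$-measurable as a supremum of Carath\'eodory functions --- and verifies $\Gamma(L^\infty)\text{-}\lim_p F_p=\sup_p\Gamma_{L^\infty}(F_p)=\supess_\Omega f_\infty(\cdot,\nabla u)$ by two direct estimates (H\"older for the limsup, a positive-measure-set argument and Beppo Levi for the liminf). Strong Morrey quasiconvexity of $f_\infty(x,\cdot)$ and \eqref{comparison} then drop out of Proposition \ref{lev1} and $\tilde{f^p}\ge Qf^p$.

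The payoff of the paper's approach is that the measurability bookkeeping you flag as the main obstacle simply evaporates: Buttazzo's theorem delivers Carath\'eodory $\tilde{f^p}$'s for free, and everything downstream is a pointwise supremum of functions continuous in $\xi$ and measurable in $x$. Your approach postpones the identification of the density until after the $\Gamma$-limit, forcing you to prove measurability of the blow-up $x\mapsto\lim_{\rho\to0}\bar F(\ell_\xi,B_\rho(x))$ by hand --- and that step (why is $x\mapsto\bar F(\ell_\xi,B_\rho(x))$ Borel for a $\Gamma$-limit that is only defined variationally?) is not quite settled by your sketch. Your localization scheme would be the natural route for more general settings (non-Lipschitz $\Omega$, differential constraints other than curl); under the present hypotheses the paper's shortcut via $\tilde{f^p}$ is both shorter and avoids the abstract supremal representation machinery altogether.
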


In particular, under the assumptions of Theorem \ref{relax1}, the latter result guarantees that
the relaxed functional $W^{1,\infty}(\Omega;\mathbb R^d) \ni u \to \Gamma_{w*}(F)(u)=\supess_{x \in \Omega}f^{lslc}(\nabla u(x))$ can be obtained as the $\Gamma$-limit with respect to the uniform convergence of the sequence of the  integral functionals $(F_p(u))_{p\geq 1}$ defined by  \eqref{curl2}. 
More precisely, in Remark \ref{Lpd1} we will discuss several special cases of assumptions on $f$.

If the supremand  $f(x,\cdot)$ is upper semicontinuous for a.e. $x \in \Omega$, then $f_{\infty}(x,\cdot)=Q_{\infty}f(x,\cdot)$. The same conclusion holds when $f\equiv f(\xi)$.

\noindent In addiction, if $f(x,\cdot)$ is upper semicontinuous and level convex for a.e. $x\in \Omega$, then \eqref{Finfty2} can be specialized, since $$f_{\infty}(x,\cdot)=Q_{\infty}f(x,\cdot)=f^{ls}(x,\cdot) \;\;\hbox{ for a.e. }x \in \Omega.$$ 
The same conclusion holds when $f\equiv f(\xi)$ is level convex.

\noindent Moreover if $N=1$ or $d=1$, 
if $f(x,\cdot)$ is upper semicontinuous or $f\equiv f(\xi)$ then 
we get that $$f_{\infty}(x,\cdot)= Q_\infty f(x,\cdot)= f^{lslc}(x,\cdot)\;\;\hbox{ for a.e. }x \in \Omega.$$

\noindent Note that these results are new in literature since the other $L^p$-approximation results  
suppose that  $f$ is lower semicontinuous with respect to the gradient variable. 
Indeed Theorem 3.2  in \cite{PNodea} requires that $f=f(x,\xi)$ is a Carath\'eodory function satisfying a growth condition with respect to the second variable (uniformly with respect to $x$) of the type \eqref{fcrescita2}; anagously Theorem 3.1 in \cite{CDPP} applies when $f=f(x,\xi)$ is lower semicontinuous w.r.t the second variable.
\\

The paper is organized as follows: Section \ref{prelim} is devoted to preliminaries that will be exploited in the sequel and contains some results of borader scope on explicit representation of envelopes of functions and their effective domains, thus generalizing the results in \cite[Section 2]{RZ}, (cf. \cite{CDeA} for their counterparts in the convex setting). Theorem \ref{relax1} is  proven in Section \ref{relaxsec}, together with an integral representation result for the relaxation of unbounded integral functionals (see Corollary \ref{unbddint}). Finally in Section \ref{Lpapproximation} we provide the proof of Theorem \ref{curlcase2}, and discuss particular cases and special representations in Remark \ref{Lpd1}.

The following notation is adopted in the paper. 
\begin{description}
	\item{-} $(X, \tau)$ denote a topological vector space whose generic elements will be denoted by $x$;
	\item  {-}  for every  $Y \subset X$, by $\overline Y^{\,\tau}$ we mean the closure of $Y$ in $X$ with respect to the topology $\tau$. When $X$ is an Euclidean space and $\tau$ is the natural topology, we adopt just the symbol $\overline Y$;
	\item  {-}  for every set $S \subset X$ we denote by ${\rm co}S$ its convex hull, namely the smallest convex set containing $S$, which can be described as the intersection of all the convex sets (affine hyperplanes which contain $S$). It is easily seen that
	$
	\overline{{\rm co}S}^{\, \tau}= {\rm co}(\overline{S}^{\, \tau});
	$
\item  {-}  $\overline\R$ denotes the set $[-\infty;+\infty]$;
\item  {-} for every function $W:X\to \overline \R$, ${\rm dom}W$ denotes its effective domain, i.e.
	\begin{equation*}\label{dom}
	{\rm dom}W:=\{x \in X: W(x)<+\infty\},
	\end{equation*}
\noindent	and for every $\lambda \in \mathbb R$, $L_\lambda(W)$,
	$$L_\lambda(W):=\{x\in X: W(x)\leq \lambda\}$$
 is the level set of $W$ corresponding to $\lambda$;
	\item {-}  for every $N \in \mathbb N$, $\B_N$ and $\L^N$ denote the Borel measure in $\R^N$, and the Lebesgue one, respectively;

		\item {-} $w$*  denotes the weak* topology in $W^{1,\infty}(\Omega;\R^d)$, unless differently stated. 	
\end{description}


\section{Preliminary results}\label{prelim}
The aim of this section is twofold, from one hand we recall existing results which will be useful in the  body of paper, and from the other, we provide some characterizations of level convex functions defined in general topological vector space $(X,\tau)$. In particular some  of these results are new to our knowledge and of indipendent interest
In Subsection \ref{subsectGamma}, we recall the definition and the main properties of $\Gamma$-convergence. These topics,  together with  classical relaxation results for integral functionals  in the Sobolev setting  (see  Subsection \ref{lscrelint})  enable us to deal with the $L^p$- approximation of Section \ref{Lpapproximation}. 
Finally in Subsection \ref{2.4}  we specialize the properties of the level convex and lower semicontinuous envelope  $f^{lslc}$  when  $f: \R^{d\times N}\to \overline \R$.

%
%
\color{black}
\subsection{Relaxation and level convex envelopes}\label{2.1}
In this subsection we provide several  relations among envelopes of functions in $(X,\tau)$ that will be used in the sequel, thus generalizing some of the results contained in \cite[Section 2]{RZ}.

\begin{defn}\label{levconvx} A function  $F:(X,\tau)\to \overline\R$ is {\sl level convex} if 
$$
F(t x_1 + (1-t)x_2) \leq \max\{F(x_1), F(x_2)\} \quad \forall t \in (0,1),\ \forall x_1, x_2 \in X$$ that is, for every $\la\in \R$  the sublevel set  $L_\lambda(F)$ (see \eqref{Ela}) is convex.
\end{defn}

\begin{defn}  Let   $F:(X,\tau)\to \overline\R$ be a function.
 \begin{enumerate}
 \item The {\sl lower semicontinuous envelope} (or {\sl relaxed function}) of $F$ is defined as 
\begin{equation*}\label{envelopetau}
\Gamma_\tau(F) :=\sup\{G\,|\, G:(X,\tau)\to \overline\R \, ,   G \ \tau\hbox{-lsc}\ \hbox{and } G\le F \hbox{ on } X\}. 
\end{equation*}
\item The {\sl level convex envelope}  of $F$ is defined as 
\begin{equation*}\label{envelopelc}
F^{lc} :=\sup\{G\,|\, G:(X,\tau)\to \overline\R \, ,   G \ \hbox{level convex}\ \hbox{and } G\le F \hbox{ on } X\}. 
\end{equation*}

\end{enumerate}
\end{defn}
\noindent Note that $\Gamma_\tau(F)$ (resp. $F^{lc}$) is the greatest $\tau$-lower semicontinuous (shortly $\tau$-l.s.c) (resp. level convex) function which is less than or equal to $F$. 
	 \noindent By \cite[Proposition 3.5(a)]{DM}  we have that 
\begin{equation}
\label{lsflc0}  
\{\xi \in X: \Gamma_\tau(F)(x)\leq \la\}=\bigcap_{\varepsilon >0}\overline{	L_{\la+\varepsilon}(F)}^{\, \tau}.
\end{equation}
Moreover, by definition, it easily follows that  
	\begin{equation}
	\label{inf1}
	\inf_{X}F=\inf_{X}\Gamma_\tau(F) =\inf_{X}F^{lc}=\inf_{X}\Gamma_\tau(F^{lc}).
	\end{equation}

	\noindent Finally,  if $F:(X,\tau)\to \overline\R$,   we consider the envelope $$F^{lslc}:=\sup\{G\,|\, G:(X,\tau)\to \overline\R \, ,   G \ \hbox{ level convex and }\tau\hbox{-}  l.s.c.\ \hbox{and } G\le F \hbox{ on } X\}, $$
	that is the greatest lower semicontinuous and level convex function less than or equal to $F$.  We recall  that there exists a wide literature devoted to the study of a conjugation for level convex functions (see for example  \cite{BL}, \cite{V} and \cite{M-L} among the others).

\begin{prop}
\label{propertiesF}	
Let $F:X\to \overline\R $. Then 
\begin{equation}\label{envelopelslc}
\Gamma_\tau(F^{lc}) =F^{lslc}\leq (\Gamma_\tau(F)) ^{lc}.
\end{equation}
	In particular 
		if $F$ is level convex  then $\Gamma_\tau(F)$ is level convex and 
		\begin{equation}\label{envelopels}
		\Gamma_\tau(F) =F^{lslc}.
		\end{equation}
\end{prop}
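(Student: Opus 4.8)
The plan is to establish the chain \eqref{envelopelslc} by showing each envelope operation interacts with the level-convex and lower-semicontinuous operations in the expected way, and then deduce the special case. First I would prove the inequality $\Gamma_\tau(F^{lc}) \le (\Gamma_\tau(F))^{lc}$: since $\Gamma_\tau(F)\le F$ and relaxation is monotone, and since $(\Gamma_\tau(F))^{lc}\le \Gamma_\tau(F)\le F$ with $(\Gamma_\tau(F))^{lc}$ level convex, by the very definition of $F^{lc}$ as the largest level convex minorant of $F$ we get $(\Gamma_\tau(F))^{lc}\le F^{lc}$; but $(\Gamma_\tau(F))^{lc}$ is also $\tau$-l.s.c.\ (being a pointwise supremum of the l.s.c.\ function $\Gamma_\tau(F)$'s level-convex minorants — or more directly, because one shows $(\Gamma_\tau(F))^{lc}$ is itself l.s.c., see below), hence $(\Gamma_\tau(F))^{lc}\le \Gamma_\tau(F^{lc})$. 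Running this the other way needs the key fact that \emph{the lower semicontinuous envelope of a level convex function is level convex}: if $G$ is level convex, then by \eqref{lsflc0} its level sets $L_\la(\Gamma_\tau(G))=\bigcap_{\ee>0}\overline{L_{\la+\ee}(G)}^{\,\tau}$ are intersections of closures of convex sets, hence convex, so $\Gamma_\tau(G)$ is level convex. Applying this with $G=F^{lc}$ shows $\Gamma_\tau(F^{lc})$ is a level convex and $\tau$-l.s.c.\ minorant of $F^{lc}\le F$, hence $\Gamma_\tau(F^{lc})\le F^{lslc}$.

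For the reverse inequality $F^{lslc}\le \Gamma_\tau(F^{lc})$: any competitor $G$ in the supremum defining $F^{lslc}$ is level convex and $\le F$, hence $G\le F^{lc}$ by maximality of $F^{lc}$; and $G$ is $\tau$-l.s.c., so $G\le \Gamma_\tau(F^{lc})$. Taking the supremum over all such $G$ gives $F^{lslc}\le\Gamma_\tau(F^{lc})$. Combined with the previous step this yields the equality $\Gamma_\tau(F^{lc})=F^{lslc}$, and together with the first paragraph the full display \eqref{envelopelslc}.

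Finally, the ``in particular'' clause: if $F$ is itself level convex, then $F^{lc}=F$, so \eqref{envelopelslc} immediately gives $\Gamma_\tau(F)=\Gamma_\tau(F^{lc})=F^{lslc}$, which is \eqref{envelopels}; and $\Gamma_\tau(F)=\Gamma_\tau(F^{lc})$ is level convex by the key fact invoked above (the relaxation of the level convex function $F$ has convex sublevels by \eqref{lsflc0}).

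The main obstacle is the structural lemma that relaxation preserves level convexity, i.e.\ that $\Gamma_\tau$ of a level convex function has convex level sets. Everything else is a formal juggling of the maximality/minorant characterizations of the three envelope operations together with monotonicity of $\Gamma_\tau$. The cited identity \eqref{lsflc0} (from \cite[Proposition 3.5(a)]{DM}) is exactly what makes this lemma transparent — the level sets of the relaxed function are nested intersections of topological closures of the level sets of $F$, and closure preserves convexity, so one just needs to note that an intersection of convex sets is convex and that this holds simultaneously for all $\la$; a minor point to be careful about is handling the strict-versus-nonstrict sublevels and the $\ee\downarrow 0$ limit, but \eqref{lsflc0} packages this cleanly.
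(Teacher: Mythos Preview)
Your treatment of the equality $\Gamma_\tau(F^{lc})=F^{lslc}$ is correct and matches the paper's argument: you use \eqref{lsflc0} to see that the sublevel sets of $\Gamma_\tau(F^{lc})$ are intersections of closures of convex sets, hence convex, so $\Gamma_\tau(F^{lc})$ is level convex and $\tau$-l.s.c.\ and $\le F$, giving $\Gamma_\tau(F^{lc})\le F^{lslc}$; and conversely any $G$ competing for $F^{lslc}$ is level convex and $\le F$, so $G\le F^{lc}$, and l.s.c., so $G\le \Gamma_\tau(F^{lc})$. The ``in particular'' clause is also fine.

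The first paragraph, however, contains a genuine error. You announce the inequality $\Gamma_\tau(F^{lc})\le (\Gamma_\tau(F))^{lc}$, but the chain you write actually concludes with $(\Gamma_\tau(F))^{lc}\le \Gamma_\tau(F^{lc})$, i.e.\ the \emph{opposite} direction. More importantly, the step on which it hinges --- that $(\Gamma_\tau(F))^{lc}$ is $\tau$-l.s.c.\ --- is false in general: the level convex envelope of a lower semicontinuous function need not be lower semicontinuous (the paper gives an explicit counterexample in Remark~\ref{noteqineq}, where the inequality in \eqref{envelopelslc} is strict). Your justification ``being a pointwise supremum of the l.s.c.\ function $\Gamma_\tau(F)$'s level-convex minorants'' does not work, since those minorants are not required to be l.s.c. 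So as written, the inequality $F^{lslc}\le (\Gamma_\tau(F))^{lc}$ is nowhere established.

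The fix is immediate and is how the paper argues: $F^{lslc}$ is $\tau$-l.s.c.\ and $\le F$, hence $F^{lslc}\le \Gamma_\tau(F)$; and $F^{lslc}$ is level convex, hence $F^{lslc}\le (\Gamma_\tau(F))^{lc}$ by maximality of the level convex envelope. This one-line observation replaces your entire first paragraph.
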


\begin{proof} Since $F^{lslc}$ is $\tau$-l.s.c. and level convex, we have that 
\begin{equation}\label{first}
	F^{lslc}\leq \min\{  \Gamma_\tau(F^{lc}),( \Gamma_\tau(F))^{lc}\} \leq F.
\end{equation}	 
	
\noindent In order to conclude the proof of \eqref{envelopelslc}, observe that for every $ \la \geq \inf F$ and for every $\varepsilon>0$ 
 the set $ \{x \in X: F^{lc}(x)\leq \la+\varepsilon\}$ is convex. Then  its $\tau$-closure is still convex. Thanks to  \eqref{lsflc0},  we can deduce that  $\{x \in X:  \Gamma_\tau(F^{lc})(x)\leq \la\}$ is convex for every $\lambda\geq  \inf F= \inf \Gamma_\tau(F^{lc})$. Thus $ \Gamma_\tau(F^{lc})$ is level convex and lower semicontinuous; consequently, exploiting \eqref{first}, we get \eqref{envelopelslc}. 
	In the particular case when $F$ is level convex,   \eqref{envelopelslc} implies  \eqref{envelopels}.
	\end{proof}
\bigskip

		The following corollary of Proposition \ref{propertiesF} holds:
	\begin{cor}\label{weakseq}  Let $X$ be a separable Banach space, and $X'$ its dual. Let  $F:X'\to \overline \R$ be  level convex and let $$\Gamma_{w*}(F) =\sup\{G:X'\to \overline\R: G\hbox{ weak* lower semicontinuous}, G \leq F\},$$
	 where $w*$ denotes the weak* topology in $X'$. Then $\Gamma_{w*}(F)$ is level convex and  for every $y \in X'$
\beq\label{seqw*} 
\Gamma_{w*}(F)(y)=\inf\{\liminf_n F(y_n): y_n \overset{\ast}{\rightharpoonup} y\},
\eeq

\end{cor}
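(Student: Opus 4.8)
The plan is to prove Corollary~\ref{weakseq} by combining Proposition~\ref{propertiesF} with a classical metrizability argument on bounded subsets of the dual of a separable Banach space. First I would establish the level convexity of $\Gamma_{w*}(F)$: since $F$ is level convex, Proposition~\ref{propertiesF} (specifically \eqref{envelopels}, applied with $\tau=w*$) gives $\Gamma_{w*}(F)=F^{lslc}$, which is level convex by construction; alternatively one invokes \eqref{lsflc0} with $\tau = w*$ together with the observation that the $w*$-closure of a convex set is convex, exactly as in the proof of Proposition~\ref{propertiesF}. So the level convexity is immediate from the already-proven material.

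The substantive part is the sequential representation formula \eqref{seqw*}. Denote the right-hand side by $\widetilde F(y):=\inf\{\liminf_n F(y_n): y_n \overset{\ast}{\rightharpoonup} y\}$, the $w^*_{seq}$-relaxed functional. One inclusion is soft: any $w*$-lower semicontinuous functional is $w*_{seq}$-lower semicontinuous, so $\Gamma_{w*}(F)\leq \widetilde F$ always holds (and $\widetilde F\leq F$ by taking the constant sequence). For the reverse inequality I would argue that $\widetilde F$ is itself $w*$-lower semicontinuous, which by maximality of $\Gamma_{w*}(F)$ among $w*$-l.s.c.\ minorants of $F$ forces $\widetilde F\leq \Gamma_{w*}(F)$. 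To see that $\widetilde F$ is $w*$-l.s.c., the key step is that its sublevel sets $L_\lambda(\widetilde F)$ are $w*$-closed. Fix $\lambda\in\R$ and a net — but here is where separability enters: it suffices to check sequential closedness on bounded sets and then use that $w*$-closed-on-bounded-sets plus convexity of the sublevels (which we have, since $\widetilde F$ inherits level convexity from $F$ by passing to the limit along convex combinations of sequences) implies $w*$-closed via the Krein--Smulian theorem. Concretely: $\overline{B_{X'}}$ is $w*$-metrizable because $X$ is separable, so on each ball the $w*$-topology is sequential; a standard diagonal argument shows $L_\lambda(\widetilde F)\cap r\overline{B_{X'}}$ is $w*$-sequentially closed hence $w*$-closed for every $r>0$; since this set is also convex, Krein--Smulian yields that $L_\lambda(\widetilde F)$ is $w*$-closed.

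The main obstacle, and the step deserving the most care, is precisely this Krein--Smulian passage from ``sequentially closed on balls'' to ``$w*$-closed'': it genuinely requires both the separability of $X$ (to metrize balls) and the convexity of the sublevel sets of $\widetilde F$ (to apply Krein--Smulian). Without convexity the statement would be false in general, so the level convexity of $F$ is not merely decorative here — it is what makes the sequential and topological relaxations coincide. A subsidiary technical point is verifying that $\widetilde F$ is level convex: given $y_1,y_2$ and recovery sequences $y_n^1\overset{\ast}{\rightharpoonup} y_1$, $y_n^2\overset{\ast}{\rightharpoonup} y_2$ with $\liminf_n F(y_n^i)$ close to $\widetilde F(y_i)$, the sequence $t y_n^1+(1-t)y_n^2$ converges $w*$ to $ty_1+(1-t)y_2$, and level convexity of $F$ gives $F(t y_n^1+(1-t)y_n^2)\leq \max\{F(y_n^1),F(y_n^2)\}$, whence $\widetilde F(ty_1+(1-t)y_2)\leq\max\{\widetilde F(y_1),\widetilde F(y_2)\}$ after passing to the $\liminf$. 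Finally, one records that \eqref{inf1} and \eqref{envelopels} guarantee $\inf_{X'} F=\inf_{X'}\Gamma_{w*}(F)$, so the formula \eqref{seqw*} is also consistent with the preservation of infima, closing the argument.
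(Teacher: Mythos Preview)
Your proof is correct in strategy and considerably more explicit than the paper's own argument, which consists of two lines: observe via Proposition~\ref{propertiesF} that $\Gamma_{w*}(F)=F^{lslc}$ (hence level convex), and then cite \cite[Proposition~2.16]{GP} for the sequential formula~\eqref{seqw*}. So the paper defers the substantive content entirely to an external reference, whereas you supply a self-contained argument---presumably close to what underlies that cited proposition. The architecture you describe (show $\widetilde F$ is level convex; use metrizability of bounded dual balls together with Krein--Smulian to upgrade sequential closedness of the convex sublevel sets to $w*$-closedness) is exactly the right mechanism and makes transparent why both separability of $X$ and level convexity of $F$ are essential hypotheses.

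One technical caveat: the step you label a ``standard diagonal argument'' for showing $L_\lambda(\widetilde F)\cap r\overline{B_{X'}}$ is $w*$-sequentially closed is a bit more delicate than it first appears. Given $z_k\overset{\ast}{\rightharpoonup} z$ in $r\overline{B_{X'}}$ with $\widetilde F(z_k)\le\lambda$, the recovery sequences $(y_n^{(k)})_n$ for each $z_k$ are bounded by Banach--Steinhaus, but with bounds $M_k$ that need not be uniform in $k$; a naive diagonal extraction may then fail to be bounded, hence fail to $w*$-converge. The clean way around this is to work one level down: for each $\varepsilon>0$ one has $z_k\in\overline{L_{\lambda+\varepsilon}(F)}^{w*}$, hence $z\in\overline{L_{\lambda+\varepsilon}(F)}^{w*}$, and then one invokes the fact (equivalent to what you are proving, and the real content of the Krein--Smulian step for convex sets in duals of separable spaces) that every point in the $w*$-closure of a \emph{convex} set is already the $w*$-limit of a \emph{sequence} from that set. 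This yields a recovery sequence for $z$ directly from $L_{\lambda+\varepsilon}(F)$, bypassing the double diagonalization. Your outline is sound, but in writing it up this is the step that deserves the care you allotted to it.
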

\begin{proof}  It is sufficient to observe that $\Gamma_{w*}(F)=F^{lslc}$ (where the symbol ${ls}$ refers to the topology $w*$ in $X'$) and to apply \cite[Proposition 2.16]{GP}.
\end{proof}

\begin{prop}
	\label{Functionalslslc}
	For every $F:(X,\tau)\to \overline\R$ and for  every continuous strictly increasing function $\Phi:\overline\R\to [a,b]$, it results
	\begin{equation}
	\label{DM1eq}
	\Gamma_\tau(\Phi(F)=\Phi(\Gamma_\tau (F))\end{equation}
	\begin{equation}
	\label{due}
	(\Phi(F))^{lc}= \Phi(F^{lc})
	\end{equation} 
	and 
	\begin{equation*}
	\label{DMeq2}
	(\Phi(F))^{lslc}=\Phi(F^{lslc}).
	\end{equation*}
	\end{prop}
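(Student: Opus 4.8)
The plan is to prove the three identities in Proposition~\ref{Functionalslslc} by reducing everything to a single clean fact about how superlevel/sublevel sets transform under a continuous strictly increasing $\Phi:\overline\R\to[a,b]$, and then to invoke the level-set characterizations already recorded in this subsection, namely \eqref{lsflc0} for the $\tau$-lower semicontinuous envelope and Definition~\ref{levconvx} (level convexity $\iff$ convexity of all sublevel sets) together with the sublevel-set description of $F^{lc}$ used in the proof of Proposition~\ref{propertiesF}. The key elementary observation is that, since $\Phi$ is a continuous strictly increasing bijection onto its image, for every $\mu\in[a,b]$ one has
\[
\{x\in X:\Phi(F)(x)\le\mu\}=\{x\in X: F(x)\le \Phi^{-1}(\mu)\}=L_{\Phi^{-1}(\mu)}(F),
\]
with the obvious conventions at the endpoints $a,b$ (and for $\mu<a$ the set is empty, for $\mu\ge b$ it is all of $X$), so the sublevel sets of $\Phi(F)$ are exactly the sublevel sets of $F$, merely reindexed by the homeomorphism $\Phi$ of the level parameter; moreover $\Phi^{-1}$ is itself continuous and strictly increasing, so $\Phi^{-1}(\mu+\varepsilon)\downarrow \Phi^{-1}(\mu)$ as $\varepsilon\downarrow 0$.

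First I would prove \eqref{due}: since $F^{lc}$ is level convex and $\le F$, and $\Phi$ is increasing, $\Phi(F^{lc})$ is level convex (its sublevel sets coincide with those of $F^{lc}$, which are convex) and $\le\Phi(F)$, hence $\Phi(F^{lc})\le(\Phi(F))^{lc}$. For the reverse inequality, apply the same argument to $\Phi^{-1}$ (which is also continuous and strictly increasing) and to the function $\Phi(F)$: one gets $\Phi^{-1}\!\big((\Phi(F))^{lc}\big)\le \big(\Phi^{-1}(\Phi(F))\big)^{lc}=F^{lc}$, and applying $\Phi$ to both sides (monotone) yields $(\Phi(F))^{lc}\le\Phi(F^{lc})$. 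Next, \eqref{DM1eq} is exactly \cite[Proposition~3.5]{DM} / follows from \eqref{lsflc0}: indeed
\[
\{x:\Gamma_\tau(\Phi(F))(x)\le\mu\}=\bigcap_{\varepsilon>0}\overline{L_{\mu+\varepsilon}(\Phi(F))}^{\,\tau}
=\bigcap_{\varepsilon>0}\overline{L_{\Phi^{-1}(\mu+\varepsilon)}(F)}^{\,\tau}
=\bigcap_{\delta>0}\overline{L_{\mu'+\delta}(F)}^{\,\tau}
\]
where $\mu'=\Phi^{-1}(\mu)$ and the last equality uses that $\Phi^{-1}(\mu+\varepsilon)$ ranges over exactly a right-neighbourhood $(\mu',\mu'+\delta_0)$ of $\mu'$ as $\varepsilon\downarrow0$ (continuity and strict monotonicity of $\Phi^{-1}$); the right-hand side is $\{x:\Gamma_\tau(F)(x)\le\mu'\}=\{x:\Phi(\Gamma_\tau(F))(x)\le\mu\}$, giving \eqref{DM1eq}. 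Since $\Phi$ is a bijection onto $[a,b]$ and the chosen test levels $\mu$ exhaust $[a,b]$, equality of all sublevel sets forces equality of the functions.

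Finally, for $(\Phi(F))^{lslc}=\Phi(F^{lslc})$ I would simply compose the two previous identities via Proposition~\ref{propertiesF}: by \eqref{envelopelslc}, $(\Phi(F))^{lslc}=\Gamma_\tau\big((\Phi(F))^{lc}\big)$, which by \eqref{due} equals $\Gamma_\tau\big(\Phi(F^{lc})\big)$, which by \eqref{DM1eq} equals $\Phi\big(\Gamma_\tau(F^{lc})\big)$, which by \eqref{envelopelslc} again equals $\Phi(F^{lslc})$. I expect the only real point requiring care — the ``main obstacle'', such as it is — to be the bookkeeping at the endpoints and the handling of the values $\pm\infty$: one must check that $\Phi$ strictly increasing and continuous on $\overline\R$ with values in the compact interval $[a,b]$ makes $\Phi^{-1}$ well-defined and continuous as a map $[a,b]\to\overline\R$, and that the conventions $\Gamma_\tau(F)(x)=+\infty$, $F^{lc}(x)=-\infty$ etc. interact correctly with $\Phi$; once this is pinned down, all three statements are immediate consequences of the level-set identities above and of Proposition~\ref{propertiesF}.
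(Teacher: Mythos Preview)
Your proposal is correct and follows essentially the same route as the paper: the argument for \eqref{due} via the two inequalities using $\Phi$ and $\Phi^{-1}$ is identical, and the final identity is obtained in both cases by chaining \eqref{due}, \eqref{DM1eq}, and Proposition~\ref{propertiesF}. The only cosmetic difference is that for \eqref{DM1eq} the paper simply quotes \cite[Proposition~6.16]{DM}, whereas you sketch a direct proof of that fact via the level-set characterization \eqref{lsflc0}.
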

	\begin{proof}
	\eqref{DM1eq} follows  by   \cite[Proposition 6.16]{DM}.
	In order to show \eqref{due}, note that  $\Phi(F^{lc})\leq \Phi(F)$ implies 
	\begin{equation}
\label{DMeq}
\Phi(F^{lc})\leq (\Phi(F))^{lc}.
\end{equation}
since the composition of an increasing function and a level convex one is still level convex.
Moreover 
$$
\Phi^{-1}((\Phi(F))^{lc})\leq \Phi^{-1}(\Phi(F))=F.
$$
Hence
$$
\Phi^{-1}((\Phi(F))^{lc})\leq F^{lc}.
$$
Thus
	$$
	(\Phi(F))^{lc}\leq \Phi(F^{lc}),
	$$
	which, together with \eqref{DMeq}, gives \eqref{due}. Finally, Proposition \ref{propertiesF}, \eqref{due}  and \eqref{DM1eq}   entail
$$
(\Phi(F))^{lslc}=\Gamma_\tau((\Phi(F))^{lc})=\Gamma_\tau(\Phi(F^{lc}))=\Phi(\Gamma_\tau(F^{lc}))=\Phi(F^{lslc}).
$$
\end{proof}
\bigskip
\begin{rem}
	\label{arctan}{\rm 
 By \eqref{DM1eq} it follows that 
$$
\Gamma_\tau (F)=\Phi^{-1}(	\Gamma_\tau(\Phi(F)).
$$
In particular, if $\Omega\subset \R^{d\times N}$ is a bounded open set, $g:\R^{d\times N}\to \overline \R$ is a Borel  function and  $G:W^{1,\infty}(\Omega;\mathbb R^d)\to \overline\R$ is the supremal functional defined as $$G(u):=\supess_{\Omega}g(\nabla u),$$   in order to detect $\Gamma_\tau(G)$, it suffices to detect $\Gamma_\tau(\arctan G)$. Since it  holds  that  $$(\arctan G)(u)=\supess_{\Omega}\arctan g(\nabla u),$$ without loss of generality, we can assume that $g$ is finite valued.}
\end{rem}
\bigskip

\noindent We conclude this subsection by proving a  general representation result for the functional $F^{lc}$. 

\begin{prop}
	\label{flcbuild}
	Let $F:(X,\tau)\to \overline \R.$ Then
	\begin{equation}\label{Flcrep}
	F^{lc}(x)=\inf\{\lambda \in \mathbb R: x \in {\rm co}L_\lambda (F)\}.
	\end{equation}
\end{prop}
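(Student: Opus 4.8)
The plan is to show the two inequalities between $F^{lc}(x)$ and the candidate function $G(x):=\inf\{\lambda\in\mathbb R: x\in{\rm co}\,L_\lambda(F)\}$ (with the usual convention $\inf\emptyset=+\infty$). First I would check that $G$ is itself level convex: its sublevel sets can be written as $L_\mu(G)=\bigcap_{\lambda>\mu}{\rm co}\,L_\lambda(F)$ (or one shows directly that if $x_1\in{\rm co}\,L_{\lambda_1}(F)$ and $x_2\in{\rm co}\,L_{\lambda_2}(F)$ then any convex combination lies in ${\rm co}\,L_{\max\{\lambda_1,\lambda_2\}}(F)$, using the monotonicity $L_{\lambda}(F)\subset L_{\lambda'}(F)$ for $\lambda\le\lambda'$ and the fact that the convex hull of a set containing two convex pieces contains their segment joins). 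Hence each $L_\mu(G)$ is convex, so $G$ is level convex. Moreover $G\le F$: if $F(x)=\lambda$ then $x\in L_\lambda(F)\subset{\rm co}\,L_\lambda(F)$, so $G(x)\le\lambda$. By the very definition of $F^{lc}$ as the supremum of all level convex minorants, this gives $G\le F^{lc}$.

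For the reverse inequality $F^{lc}\le G$ it suffices to observe that $F^{lc}$ is level convex and $\le F$, hence for every $\lambda\in\mathbb R$ we have $L_\lambda(F)\subset L_\lambda(F^{lc})$, and the latter is convex, so ${\rm co}\,L_\lambda(F)\subset L_\lambda(F^{lc})$. Consequently, whenever $x\in{\rm co}\,L_\lambda(F)$ we get $F^{lc}(x)\le\lambda$; taking the infimum over all such $\lambda$ yields $F^{lc}(x)\le G(x)$. Combining the two inequalities gives \eqref{Flcrep}.

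The only delicate point is the case analysis at the boundary values of $\lambda$ and the empty-set convention: one must make sure that when no $\lambda$ works (i.e. $x\notin{\rm co}\,L_\lambda(F)$ for all $\lambda$) the formula correctly returns $+\infty$, which matches $F^{lc}(x)=+\infty$ since then $F^{lc}(x)\le\lambda$ fails for every $\lambda$; and dually, if $F\equiv+\infty$ or $\inf F=-\infty$ the infimum is read in $\overline{\mathbb R}$. I expect the main (mild) obstacle to be the verification that $G$ is level convex — specifically handling convex combinations across two different sublevel parameters — but this reduces to the elementary inclusion ${\rm co}\,L_{\lambda_1}(F)\cup{\rm co}\,L_{\lambda_2}(F)\subset{\rm co}\,L_{\max\{\lambda_1,\lambda_2\}}(F)$ together with convexity of ${\rm co}\,L_{\max\{\lambda_1,\lambda_2\}}(F)$; no topology is used here, so $\tau$ plays no role in this particular statement.
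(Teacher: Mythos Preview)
Your proposal is correct and follows essentially the same route as the paper: define the candidate $\iota$ (your $G$), show it is a level convex minorant of $F$ via the inclusion ${\rm co}\,L_{\lambda_1}(F)\cup{\rm co}\,L_{\lambda_2}(F)\subset{\rm co}\,L_{\max\{\lambda_1,\lambda_2\}}(F)$ to get $G\le F^{lc}$, and then use ${\rm co}\,L_\lambda(F)\subset L_\lambda(F^{lc})$ for the reverse inequality. The paper carries out the level convexity verification with an explicit $\varepsilon$-argument rather than via the sublevel-set identity $L_\mu(G)=\bigcap_{\lambda>\mu}{\rm co}\,L_\lambda(F)$, but this is only a cosmetic difference.
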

\begin{proof}[Proof]
	Let $$\iota:x \in X \to \inf\{\lambda \in \mathbb R: x \in {\rm co}L_\lambda (F)\}.$$
	Clearly 
	\begin{equation}
	\label{ileqf}
	\iota(x)\leq F(x) \hbox{ for every } x \in X.
	\end{equation}
	Moreover $i$ is level convex. Indeed, fixed $x_1,x_2\in X$, for every $\varepsilon >0$ there exists $\lambda_1$ and $\lambda_2$ such that
	\begin{align*}
	\lambda_1 < \iota(x_1)+\varepsilon, \hbox{ and }
	\lambda_2 < \iota(x_2)+\varepsilon,
	\end{align*}
	and 
	\begin{align*}
	x_1 \in {\rm co}L_{\lambda_1}(F), \hbox{ and } x_2 \in {\rm co}L_{\lambda_2}(F).
	\end{align*}
	Thus
	$$tx_1 + (1-t)x_2 \in  {\rm co}L_{{\rm max}\{\lambda_1, \lambda_2\}}(F), \; \; t \in [0,1],
	$$	
	and so 
	$$
	\iota(t x_1+ (1-t)x_2) \leq \max\{\lambda_1,\lambda_2\} < \max\{\iota(x_1), \iota(x_2)\} + 2 \varepsilon.
	$$
	The arbritrariness of $\varepsilon$ guarantees the level convexity of $\iota$, which together with \eqref{ileqf}, guarantees
	\begin{equation}
	\label{ileqflc}
	\iota(x)\leq F^{lc}(x) \hbox{ for every } x \in X.
	\end{equation}
	In order to prove the opposite inequality we have that 
	for every level convex $G \leq F$,
	\begin{align*}
	L_{\lambda}(G)\supseteq L_\lambda (F) \hbox{ for every }\lambda,
	\end{align*}
	which gives
	\begin{align*}
	{\rm co}L_\lambda(F)\subseteq {\rm co}L_\lambda(G)=L_\lambda(G) \hbox{ for every }\lambda \in \mathbb R. 
	\end{align*}
	Since for every $x \in X$ it results 
	\begin{align*}
	G(x)=\inf\{\lambda \in \mathbb R: x \in L_\lambda (G)\}\leq
	\inf\{\lambda \in \mathbb R: x \in {\rm co }L_\lambda(F)\}= \iota(x),
	\end{align*}
	by choosing $G= F^{lc}$ we get 
	$ \iota \geq F^{lc}.$
	The latter inequality and  \eqref{ileqflc} conclude the proof.

\end{proof} 
%
\subsection{$\Gamma$-convergence}\label{subsectGamma}
Now we recall the notion of $\Gamma$-convergence for family of functionals defined in the topological space $(X,\tau)$, (for more details on the theory we refer to \cite{DM}).
To this end, we denote by $\mathcal{U}(x)$ the set of all open neighbourhoods of $x$ in $X$ .
\begin{defn} Let $F_n:X \to \R$ be a sequence of functions. 
	The {\sl $\Gamma({\tau})$-lower limit} and the {\sl $\Gamma({\tau})$-upper limit} of the sequence
	$(F_n)$  are the functions from $X$ into $ \overline{\R}$  defined by
	$$ \Gamma({\tau})\hbox{-}\liminf_{n\to \infty} F_n(x) :=\sup_{U\in \mathcal{U}(x) \  }\liminf_{n\to \infty} \inf_{y\in U}F_n(y) $$
	$$ \Gamma({\tau})\hbox{-}\limsup_{n\to \infty} F_n(x) :=\sup_{U\in \mathcal{U}(x) \  }\limsup_{n\to \infty} \inf_{y\in U}F_n(y) $$
	If there exists a function $F:X\to \overline{\R}$  such that $\Gamma (\tau) \hbox{-}\liminf\limits_{n\to \infty} F_n= \Gamma({\tau})\hbox{-}\limsup\limits_{n\to \infty} F_n,$
	then we write $$F=\Gamma({\tau})\hbox{-}\lim_{n\to \infty} F_n$$ and we say that the sequence  $(F_n)$
	$\Gamma(\tau)$-converges to $F$ or that $F$ is the $\Gamma({\tau})$-limit of $(F_n)_{n}$.
	
\end{defn}
Given a family of functionals $G_\ee :X \ds \overline{\R}$, we say that $(G_\ee)_{\ee}$ 
$\Gamma(\tau)$-converges to the functional $G$, as $\ee\to 0$,  
if for every $(\ee_n)\to 0$ the sequence $(G_{\ee_n})$ $\Gamma({\tau})$-converges to $G.$

The introduction of this variational convergence by De Giorgi and Franzoni (see \cite{DM} and the bibliography therein) is motivated by the  next theorem.  Indeed, under the assumption of equicoercivity for the sequence $(F_n)$, it holds the
important  property of convergence of the minimum values.

\begin{thm}\label{convmin} Suppose that the sequence $(F_n)$ is equi-coercive  in $X$, i.e. for every $t\in \R$ there exists a closed  compact subset $K_t$ of $X$ such that $\{ F_n \le t \} \subset K_t$ for every $n\in\N$.
	\noindent If $(F_n)$ $\Gamma({\tau})$-converges to a function $F$ in X,  then
	\beq\nonumber
	\min_{x\in X} F(x)=\lim_{n\to \infty} \inf_{x\in X} F_n(x).
	\eeq
	Moreover if $x_n$ is such that $F_n(x_n)\leq \inf_X F_n+\ee_n$, where $\ee_n\to 0$ and $x_{n_k}\to x$ for some
	subsequence $(x_{n_k})_k$ of $(x_n)$ then $F(x)=\min_X F$.
	
\end{thm}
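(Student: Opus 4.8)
The statement is the fundamental theorem of $\Gamma$-convergence, so the plan is to work directly from the neighbourhood definitions of $\Gamma(\tau)$-lower and $\Gamma(\tau)$-upper limit, invoking equi-coercivity only to produce and to locate convergent subsequences. Throughout set $m:=\inf_X F$ and $m_n:=\inf_X F_n$, and recall that $F$ is the common value of $\Gamma(\tau)\text{-}\liminf_n F_n$ and $\Gamma(\tau)\text{-}\limsup_n F_n$.

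First I would record the \emph{liminf inequality}: for every $x\in X$ and every sequence $x_n\to x$ one has $F(x)\le\liminf_n F_n(x_n)$. This is immediate from the definition of $\Gamma(\tau)\text{-}\liminf$, since for each $U\in\mathcal U(x)$ the points $x_n$ eventually lie in $U$, so that $\inf_{y\in U}F_n(y)\le F_n(x_n)$ and hence $\liminf_n\inf_{y\in U}F_n(y)\le\liminf_n F_n(x_n)$; taking the supremum over $U\in\mathcal U(x)$ and using $F=\Gamma(\tau)\text{-}\lim_n F_n$ yields the claim. Symmetrically, I would use $\Gamma(\tau)\text{-}\limsup$ to bound the infima from above: fixing $\varepsilon>0$ and $\bar x$ with $F(\bar x)\le m+\varepsilon$, the equality $\Gamma(\tau)\text{-}\limsup_n F_n(\bar x)=F(\bar x)$ forces $\limsup_n\inf_{y\in U}F_n(y)\le m+\varepsilon$ for every $U\in\mathcal U(\bar x)$; since $m_n=\inf_X F_n\le\inf_{y\in U}F_n(y)$, this gives $\limsup_n m_n\le m+\varepsilon$, and letting $\varepsilon\to0$ produces $\limsup_n m_n\le m$.

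It remains to prove $\liminf_n m_n\ge m$ together with attainment of $m$, and this is exactly where equi-coercivity enters; it also delivers the second assertion in one stroke. I would take almost-minimizers $x_n$ with $F_n(x_n)\le m_n+\varepsilon_n$, $\varepsilon_n\to0$, and pass to a subsequence along which $m_{n_k}\to\liminf_n m_n=:\ell$ (if $\ell=+\infty$, then by the previous paragraph $m\ge\limsup_n m_n=+\infty$, so $\lim_n m_n=m=+\infty$ and nothing further is needed). For finite $\ell$, fix any $t>\ell$: for $k$ large $F_{n_k}(x_{n_k})\le t$, hence $x_{n_k}\in\{F_{n_k}\le t\}\subseteq K_t$, and compactness of $K_t$ lets me extract a convergent further subsequence $x_{n_{k_j}}\to x$. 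The liminf inequality then gives
\[
m\le F(x)\le\liminf_j F_{n_{k_j}}(x_{n_{k_j}})\le\liminf_j\bigl(m_{n_{k_j}}+\varepsilon_{n_{k_j}}\bigr)=\ell,
\]
so $m\le\ell=\liminf_n m_n$; combined with the bound $\limsup_n m_n\le m$ this forces $\lim_n m_n=m$ and, since $F(x)=m$, shows the minimum is attained, legitimising $\min_X F$. Applying the identical chain to an arbitrary almost-minimizing sequence that admits a convergent subsequence $x_{n_k}\to x$ gives $F(x)\le\lim_k(m_{n_k}+\varepsilon_{n_k})=\lim_n m_n=m$, whence $F(x)=\min_X F$, which is precisely the final assertion.

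The only delicate point is the extraction of a genuine convergent \emph{subsequence} from $(x_{n_k})\subseteq K_t$: in a general topological space compactness supplies only a convergent subnet, so strictly one needs $K_t$ to be sequentially compact, as is the case for the bounded weak* compact subsets of $W^{1,\infty}(\Omega;\mathbb R^d)$ relevant to this paper, or one appeals to the metrizability hypotheses under which this classical result is established in \cite{DM}. Everything else reduces to a direct manipulation of the neighbourhood definitions and of the inclusion $\{F_n\le t\}\subseteq K_t$.
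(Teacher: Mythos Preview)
The paper does not supply its own proof of this statement; immediately after it, the authors write ``For a proof, see \cite[Theorem 7.8 and Corollary 7.17]{DM}.'' Your argument is precisely the standard one from Dal Maso's monograph: the $\Gamma$-$\limsup$ side yields $\limsup_n m_n\le m$, equi-coercivity together with the $\Gamma$-$\liminf$ inequality gives $\liminf_n m_n\ge m$ and produces a minimizer, and the same inequality applied to a convergent almost-minimizing subsequence delivers the final assertion. Your closing caveat about sequential versus net compactness is well placed and in line with the first-countability setting in which \cite{DM} develops these results.
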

\noindent For a proof, see \cite[Theorem 7.8 and Corollary 7.17]{DM}.\\

In the following proposition we summarize some properties of the $\Gamma$-convergence useful in the sequel (see \cite[Proposition 6.8, Proposition 6.11, Proposition 5.7, Remark 5.5,  Proposition 6.26]{DM}).

\begin{prop}\label{gammaprop}  Let $F_n: X \ds \overline{\R}$ be a sequence of functions.  
	Then 
	\begin{itemize}
		\item[\rm 1.]  
Let $\hat F:= \Gamma(\tau)\hbox{-}\lim\limits_{n\to \infty}F_n$, then $\hat F$ is $\tau$-lower semicontinuous on $X$;
		\item [\rm 2.] if $(F_n)$ is a not increasing sequence which  pointwise converges to $F$  then  $\Gamma(\tau)\hbox{-}\lim\limits_{n\to \infty} F_n= \Gamma_\tau (F)$. In particular if $F_n=F$ for every $n\in \N$ then  $\Gamma(\tau)\hbox{-}\lim\limits_{n\to \infty} F= \Gamma_\tau (F);$
\item[\rm 3.]  if $\Gamma_{\tau}(F_n)$ is the $\tau$-l.s.c. envelope of  $F_n$, 
 then the sequence  $(F_n)$ $\Gamma(\tau)$-converges to $F$ if and only if the sequence of the relaxed functions $(\Gamma_{\tau}(F_n))$
 $\Gamma(\tau)$-converges to $F$, and   
$$\Gamma(\tau)\hbox{-}\lim\limits_{n\to \infty} F_n=\Gamma(\tau)\hbox{-}\lim\limits_{n\to \infty}\Gamma_{\tau}(F_n);  $$
%

		\item [\rm 4.] if $(F_n)$ is an increasing sequence of $\tau$-lower semicontinuous functions which  pointwise converges to $F$  then  $\Gamma(\tau)\hbox{-}\lim\limits_{n\to \infty} F_n= F$.
		\item [\rm 5.] for every $c\in \R$   $\Gamma_{\tau}(\max\{F,c\})=\max \{\Gamma_\tau (F),c\}.$
		
	\end{itemize}
\end{prop}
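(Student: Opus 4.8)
The plan is to derive all five items from two elementary facts about the lower semicontinuous envelope in a general topological space, so that no appeal to metrizability or first countability is needed (alternatively one may simply invoke the corresponding statements in \cite{DM}). The first fact is the representation
$$\Gamma_\tau(G)(x)=\sup_{U\in\mathcal{U}(x)}\inf_{y\in U}G(y),$$
valid for every $G:(X,\tau)\to\overline\R$ and consistent with \eqref{lsflc0}. The second is the lemma that for every \emph{open} set $U$ one has $\inf_U\Gamma_\tau(G)=\inf_U G$: the inequality ``$\leq$'' is immediate from $\Gamma_\tau(G)\leq G$, while ``$\geq$'' follows because for each $x\in U$ the open set $U$ is a neighbourhood of $x$, so $\Gamma_\tau(G)(x)\geq\inf_U G$ by the representation. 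Throughout I would keep in mind that the neighbourhood quantities defining $\Gamma(\tau)\hbox{-}\liminf$ and $\Gamma(\tau)\hbox{-}\limsup$ are monotone along inclusion of neighbourhoods, and that for a fixed open $U$ the value $\inf_{y\in U}F_n(y)$ does not depend on the base point.

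For item~1 I would show that the superlevel sets of both $\Gamma(\tau)\hbox{-}\liminf F_n$ and $\Gamma(\tau)\hbox{-}\limsup F_n$ are open: if the value at $x_0$ exceeds $t$, a witnessing open neighbourhood $U$ of $x_0$ satisfies the same strict inequality at every $x\in U$ (as $U$ is then a neighbourhood of $x$ too), so $\{\,\cdot>t\,\}$ is open; hence each is $\tau$-lower semicontinuous, and so is their common value $\hat F$ when the $\Gamma$-limit exists. For item~2, with $F_n\searrow F$ the inner sequence $n\mapsto\inf_{y\in U}F_n(y)$ is non-increasing, so its $\liminf$ and $\limsup$ agree and equal $\inf_n\inf_{y\in U}F_n(y)=\inf_{y\in U}F(y)$; taking $\sup_{U\in\mathcal{U}(x)}$ shows $\Gamma(\tau)\hbox{-}\liminf F_n=\Gamma(\tau)\hbox{-}\limsup F_n=\Gamma_\tau(F)$ by the representation. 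For item~3 I would invoke the open-set lemma: since $\inf_{y\in U}\Gamma_\tau(F_n)(y)=\inf_{y\in U}F_n(y)$ for every open $U$, the neighbourhood quantities of $(\Gamma_\tau(F_n))$ and of $(F_n)$ coincide, whence their $\Gamma(\tau)\hbox{-}\liminf$ and $\Gamma(\tau)\hbox{-}\limsup$ coincide; this gives both the equivalence of $\Gamma$-convergence and the displayed equality of limits.

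Item~4 I would prove by sandwiching. The constant recovery sequence $y_n\equiv x$ gives the general upper bound $\Gamma(\tau)\hbox{-}\limsup F_n(x)\leq\limsup_n F_n(x)=F(x)$, using $\inf_{y\in U}F_n(y)\leq F_n(x)$. For the lower bound, monotonicity $F_n\geq F_m$ for $n\geq m$ yields $\Gamma(\tau)\hbox{-}\liminf F_n\geq\Gamma_\tau(F_m)=F_m$, where the last equality uses that each $F_m$ is already $\tau$-lower semicontinuous; taking the supremum over $m$ gives $\Gamma(\tau)\hbox{-}\liminf F_n\geq\sup_m F_m=F$. The two bounds force the $\Gamma$-limit to exist and equal $F$. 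Finally, item~5 is a pointwise computation from the representation together with the elementary identity
$$\inf_{y\in U}\max\{F(y),c\}=\max\Big\{\inf_{y\in U}F(y),\,c\Big\},$$
valid on $\overline\R$, and the commutation of the continuous increasing map $t\mapsto\max\{t,c\}$ with $\sup_{U\in\mathcal{U}(x)}$; combining these gives $\Gamma_\tau(\max\{F,c\})(x)=\max\{\Gamma_\tau(F)(x),c\}$.

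The statements are entirely standard, so the only real care lies in the $\overline\R$-valued bookkeeping (handling $\pm\infty$ inside the suprema, infima and maxima) and in working with the neighbourhood filter rather than with sequences, since $(X,\tau)$ is a general topological vector space. Both the representation formula and the open-set lemma are the crux on which items~2--5 rest, and establishing them once cleanly is what makes the remainder routine; alternatively the whole proposition may be quoted verbatim from \cite[Propositions 6.8, 6.11, 5.7, Remark 5.5 and Proposition 6.26]{DM}.
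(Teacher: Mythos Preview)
Your proof is correct. The paper does not prove this proposition at all; it simply records it as a summary of standard facts with a parenthetical reference to \cite[Proposition 6.8, Proposition 6.11, Proposition 5.7, Remark 5.5, Proposition 6.26]{DM}, which is precisely the citation you offer as an alternative at the end of your proposal. So your approach is strictly more detailed than the paper's: you supply a self-contained argument based on the neighbourhood representation $\Gamma_\tau(G)(x)=\sup_{U\in\mathcal U(x)}\inf_U G$ and the open-set lemma $\inf_U\Gamma_\tau(G)=\inf_U G$, from which all five items follow by the elementary manipulations you describe. This has the advantage of making the proposition independent of \cite{DM} and of clarifying exactly which order-theoretic identities (swapping $\inf_n$ with $\inf_{y\in U}$ in item~2, the identity $\inf_U\max\{F,c\}=\max\{\inf_U F,c\}$ in item~5) are doing the work; the paper, by contrast, treats the result as background and moves on.
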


Next we  recall  the sequential
characterization of $\Gamma({\tau})$-liminf, $\Gamma({\tau})$-limsup and
$\Gamma({\tau})$-limit when the topological space $(X,\tau)$ satisfies  the first
axiom of countability (for a proof see \cite[Proposition 8.1]{DM}). 

\begin{prop} \label{seqcharacsupinf}
	Let $F_n:X \ds \overline \R $ be a sequence of functions. Then the  function

$${\dis F(x)=\Gamma({\tau})\hbox{-}\lim\ninf F_n(x)}$$ is characterized by the
	following inequalities:
	\begin{itemize}
		\item[-]($\Gamma$-liminf inequality) for every $x \in X$ and for every sequence $(x_n)$
		converging to $x$ in $X$ it is
		$$ F(x)\leq \liminf\ninf F_n(x_n); $$
		\item[-] ($\Gamma$-limsup inequality) for every $x \in X$ there exists a sequence $(x_n)$ (called a {\rm recovering sequence}) converging to $x$ in $X$   such that
		$$ F(x)= \lim\ninf F_n(x_n).  $$
	\end{itemize}
\end{prop}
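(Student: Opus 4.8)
The plan is to reduce the statement to two separate sequential characterizations, one for the $\Gamma(\tau)$-lower limit $F':=\Gamma(\tau)\text{-}\liminf\ninf F_n$ and one for the $\Gamma(\tau)$-upper limit $F'':=\Gamma(\tau)\text{-}\limsup\ninf F_n$, and then to combine them on the set where the two coincide. Concretely, I would prove that first countability forces
$$F'(x)=\min\{\liminf\ninf F_n(x_n):x_n\to x\},\qquad F''(x)=\min\{\limsup\ninf F_n(x_n):x_n\to x\},$$
with both minima attained. Granting these, the $\Gamma$-liminf inequality and the existence of a recovering sequence follow at once, and so does the converse (that any $G$ satisfying both inequalities must be the $\Gamma$-limit); this last point is where I would use that $F'\le F''$ holds in general.

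First I would establish the two lower bounds, which require no countability. Fix $x$ and a sequence $x_n\to x$. For every $U\in\mathcal{U}(x)$ one has $x_n\in U$ for $n$ large, whence $\inf_{y\in U}F_n(y)\le F_n(x_n)$ eventually; passing to the $\liminf$ (resp. $\limsup$) in $n$ and then to the supremum over $U\in\mathcal{U}(x)$ yields $F'(x)\le\liminf\ninf F_n(x_n)$ and $F''(x)\le\limsup\ninf F_n(x_n)$. In particular the $\Gamma$-liminf inequality is already proved, and $F'(x)$, $F''(x)$ are lower bounds for the respective quantities over all sequences converging to $x$.

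The core of the argument, and the place where first countability is essential, is the construction of recovering sequences attaining these lower bounds. I would fix a decreasing countable neighbourhood base $U_1\supseteq U_2\supseteq\cdots$ at $x$ and set $a_k:=\liminf_n\inf_{y\in U_k}F_n(y)$ and $b_k:=\limsup_n\inf_{y\in U_k}F_n(y)$; since the $U_k$ are decreasing and cofinal, $a_k\uparrow F'(x)$ and $b_k\uparrow F''(x)$. Assuming the relevant value is finite (the $+\infty$ case being settled by the constant sequence $x_n=x$ together with the lower bounds above), I would run a diagonal construction on blocks of indices exhausting $\mathbb N$, choosing $x_n\in U_k$ with $F_n(x_n)$ within $1/k$ of $\inf_{y\in U_k}F_n(y)$, so that $x_n\to x$ automatically. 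The main obstacle is the asymmetry between the two cases: for $F'$ the bound $a_k\le F'(x)$ only guarantees $\inf_{y\in U_k}F_n(y)<F'(x)+1/k$ for infinitely many $n$, so the $k$-th block may be chosen along a subsequence and one gets $\liminf\ninf F_n(x_n)\le F'(x)$; for $F''$ the bound $b_k\le F''(x)$ gives $\inf_{y\in U_k}F_n(y)<F''(x)+1/k$ for all $n\ge m_k$, controlling the whole tail and yielding $\limsup\ninf F_n(x_n)\le F''(x)$. Care must be taken to make the blocks cover every index $n$ while keeping $x_n\to x$.

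Finally I would assemble the characterization. On the set where $F:=\Gamma(\tau)\text{-}\lim\ninf F_n$ exists we have $F=F'=F''$; the $\Gamma$-liminf inequality is the lower bound from the second step, while the recovering sequence attaining $F''(x)=F(x)$ also satisfies $\liminf\ninf F_n(x_n)\ge F'(x)=F(x)$ by the same lower bound, which squeezes $\lim\ninf F_n(x_n)=F(x)$ and gives the $\Gamma$-limsup inequality. For the converse, if $G$ satisfies both inequalities, then the $\Gamma$-liminf inequality gives $G\le F'$, the recovering sequence gives $F''\le G$, and together with $F'\le F''$ this forces $G=F'=F''$; hence the $\Gamma$-limit exists and equals $G$, which proves that the two inequalities do characterize $F$.
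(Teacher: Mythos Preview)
Your argument is correct and is the standard textbook proof of the sequential characterization of $\Gamma$-limits under first countability. The paper does not supply its own proof of this proposition; it simply cites \cite[Proposition~8.1]{DM}, and what you have written is essentially a self-contained version of that argument. One trivial omission: you handle the case $F'(x)=+\infty$ (resp.\ $F''(x)=+\infty$) explicitly, but the case $F'(x)=-\infty$ (resp.\ $F''(x)=-\infty$) is not addressed separately; there the same diagonal scheme works once ``within $1/k$ of the infimum'' is replaced by ``below $-k$'', so this does not affect the validity of your approach.
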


Finally we note that the level convexity is stable under both  pointwise and $\Gamma$-convergence (for a proof see  \cite[Proposition 2.11]{GP}). 
\begin{prop}\label{lcGammalimite} Let $(X,\tau)$ be a topological vector space and let $F_n:X\to \overline{\R}$ be  a sequence of level convex functions.  Then 
	\begin{itemize}
		\item [\rm 1.] the function $ F^{\#}(x)=\limsup \limits_{n\to\infty} F_n(x)$   is level convex;
		\item  [\rm 2.] the function $\Gamma(\tau)$-$\limsup\limits_{n\to\infty} F_n$ is level convex. 
	\end{itemize}
	
\end{prop}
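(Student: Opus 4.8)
The plan is to reduce both statements to the single elementary fact recorded in Definition \ref{levconvx}: a function is level convex if and only if all its sublevel sets are convex, together with the observation that a pointwise supremum or increasing union/intersection of convex sets is handled by the usual set-theoretic manipulations. For part 1, fix $\lambda \in \R$; I would show that
$L_\lambda(F^{\#}) = \{x : \limsup_n F_n(x) \le \lambda\}$ is convex. The clean way is to write, for each $\varepsilon>0$,
$L_{\lambda+\varepsilon}^{\,\varepsilon} := \bigcup_{m} \bigcap_{n \ge m} L_{\lambda+\varepsilon}(F_n)$,
noting that each $L_{\lambda+\varepsilon}(F_n)$ is convex by hypothesis, each finite/countable intersection $\bigcap_{n\ge m} L_{\lambda+\varepsilon}(F_n)$ is convex, and the union over $m$ is an increasing (nested) union of convex sets, hence convex. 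Then I would prove
$L_\lambda(F^{\#}) = \bigcap_{\varepsilon>0}\bigcup_{m}\bigcap_{n\ge m}L_{\lambda+\varepsilon}(F_n)$: indeed $\limsup_n F_n(x)\le \lambda$ iff for every $\varepsilon>0$ there is $m$ with $F_n(x)\le \lambda+\varepsilon$ for all $n\ge m$. Being an intersection of convex sets, $L_\lambda(F^{\#})$ is convex, so $F^{\#}$ is level convex.

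For part 2, I would argue similarly but invoking the machinery already set up in the excerpt. First, by Proposition \ref{gammaprop}(1) the function $\hat G := \Gamma(\tau)\hbox{-}\limsup_n F_n$ is $\tau$-lower semicontinuous, and by definition it does not exceed the $\Gamma(\tau)$-$\limsup$ of the relaxed sequence; but more directly, I would use that $\Gamma(\tau)$-$\limsup_n F_n \ge \Gamma(\tau)$-$\limsup$ computed along any subsequence and compare it with $(F^{\#})$. A cleaner route: since $\Gamma(\tau)\hbox{-}\limsup_n F_n = \Gamma_\tau\bigl(\limsup_n F_n\bigr)$ fails in general, I instead bound $\Gamma(\tau)\hbox{-}\limsup_n F_n$ above by $\Gamma_\tau(F^{\#})$ — this inequality holds because $\Gamma(\tau)$-$\limsup$ is the largest $\tau$-lsc minorant dominated appropriately — wait, the safe statement is simply: $\Gamma(\tau)\hbox{-}\limsup_n F_n \le \Gamma_\tau(\limsup_n F_n)$ is not generally true either, so I would instead argue via sublevel sets directly. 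Fix $\lambda$; using the definition of the $\Gamma$-upper limit,
$\Gamma(\tau)\hbox{-}\limsup_n F_n(x) = \sup_{U \in \mathcal U(x)} \limsup_n \inf_{y\in U} F_n(y)$,
and each set $\{y : F_n(y) \le \lambda\}$ is convex, hence so is $\{y : \inf_{y' \in U} F_n(y') \le \lambda\}$ for... no — that last set need not be convex.

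So the correct and robust plan is the one used in the cited \cite[Proposition 2.11]{GP}: combine part 1 with the general fact that the $\tau$-lsc envelope of a level convex function is level convex (this is exactly \eqref{lsflc0} plus the observation that the $\tau$-closure of a convex set is convex, which the excerpt records in the notation section, $\overline{{\rm co}S}^\tau = {\rm co}(\overline S^\tau)$). Concretely: for fixed subsequences one has $\Gamma(\tau)\hbox{-}\limsup_n F_n \le \Gamma(\tau)\hbox{-}\limsup_n \Gamma_\tau(F_n)$ with equality by Proposition \ref{gammaprop}(3)-type reasoning, each $\Gamma_\tau(F_n)$ is level convex and $\tau$-lsc by part 1's method applied to a constant sequence (i.e.\ $\Gamma_\tau$ of a level convex function is level convex, via \eqref{lsflc0} and convexity of closures), and then the $\Gamma(\tau)$-$\limsup$ of a sequence of level convex $\tau$-lsc functions is level convex by repeating the sublevel-set computation of part 1 with $\bigcup_m \bigcap_{n \ge m}$ replaced by the appropriate $\varepsilon$-nested families built from the $\Gamma$-upper limit — whose sublevel sets, crucially, are already nice because the functions are $\tau$-lsc and the $\Gamma$-upper limit of lsc functions admits the representation as an intersection over $\varepsilon$ of closures of unions over $m$ of intersections over $n\ge m$ of sublevel sets. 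The main obstacle is precisely this last point: unwinding the double $\sup_U \limsup_n \inf_U$ definition of the $\Gamma$-upper limit into a statement purely about sublevel sets that visibly preserves convexity. I expect to handle it by passing to the sequential characterization (Proposition \ref{seqcharacsupinf}) if $(X,\tau)$ is first countable, or otherwise by the neighbourhood-filter identity $\Gamma(\tau)\hbox{-}\limsup_n F_n(x) = \inf\{\lambda : x \in \bigcap_{\varepsilon>0}\overline{\bigcup_m \bigcap_{n\ge m} L_{\lambda+\varepsilon}(F_n)}^{\,\tau}\}$, after which convexity is immediate from part 1 and $\overline{{\rm co}S}^\tau={\rm co}(\overline S^\tau)$. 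All remaining steps are routine set-theoretic bookkeeping.
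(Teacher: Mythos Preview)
The paper does not give its own proof of this proposition; it refers to \cite[Proposition 2.11]{GP}. So the comparison is with correctness only.

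Your argument for part 1 is fine: the identity
\[
L_\lambda(F^{\#})=\bigcap_{\varepsilon>0}\bigcup_{m}\bigcap_{n\ge m}L_{\lambda+\varepsilon}(F_n)
\]
is correct, the inner intersection is convex, the union is nested hence convex, and intersecting over $\varepsilon$ preserves convexity. (A shorter route: from $F_n(tx_1+(1-t)x_2)\le\max\{F_n(x_1),F_n(x_2)\}$ take $\limsup$ and use $\limsup_n\max\{a_n,b_n\}=\max\{\limsup_n a_n,\limsup_n b_n\}$.)

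Part 2 has a genuine gap. After several false starts you settle on the identity
\[
L_\lambda\bigl(\Gamma(\tau)\text{-}\limsup_n F_n\bigr)=\bigcap_{\varepsilon>0}\overline{\bigcup_{m}\bigcap_{n\ge m}L_{\lambda+\varepsilon}(F_n)}^{\,\tau},
\]
but this is false in general. One inclusion does hold (the right-hand side is contained in the left), yet the reverse fails: from $\limsup_n\inf_U F_n\le\lambda$ you obtain, for $n$ large, points $y_n\in U$ with $F_n(y_n)\le\lambda+\varepsilon$ that depend on $n$; you cannot extract a \emph{single} $y\in U$ with $F_n(y)\le\lambda+\varepsilon$ for all large $n$, which is what membership in the closure of $\bigcup_m\bigcap_{n\ge m}L_{\lambda+\varepsilon}(F_n)$ demands. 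A simple counterexample at the set level is $A_n=\{1/n\}\subset\R$: here $\bigcup_m\bigcap_{n\ge m}A_n=\emptyset$, while the Kuratowski lower limit is $\{0\}$. Replacing $F_n$ by $\Gamma_\tau(F_n)$ does not rescue the identity.

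Two correct routes are available. The direct one uses the vector-space structure: given $x=tx_1+(1-t)x_2$ and $U\in\mathcal U(x)$, pick $U_i\in\mathcal U(x_i)$ with $tU_1+(1-t)U_2\subseteq U$; level convexity of $F_n$ yields $\inf_U F_n\le\max\{\inf_{U_1}F_n,\inf_{U_2}F_n\}$, and taking $\limsup_n$ then $\sup_U$ finishes. Alternatively, use the correct level-set formula $L_\lambda(\Gamma\text{-}\limsup_n F_n)=\bigcap_{\varepsilon>0}K\text{-}\liminf_n L_{\lambda+\varepsilon}(F_n)$ (Kuratowski lower limit of sets) and check that $K\text{-}\liminf$ of convex sets is convex, again via the neighbourhood $tU_1+(1-t)U_2\subseteq W$ trick.
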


\subsection{Lower semicontinuity and relaxation result in the integral setting}
\label{lscrelint}

In the sequel we collect some definitions and results that will be crucial for the proof of Theorem \ref{curlcase2}. We refer the reader to \cite{B} and \cite{Dac} for a detailed treatment of this subject.

\begin{defn}\label{defqcx} Let $g:\R^{d\times N}\to\R$ be a  Borel  function and let $Q:=]0,1[^N$. Then $g$ is said quasiconvex (in the sense of Morrey) if $$g(\xi)\leq \int_Q g( \xi+ \nabla u(y))\dy$$
	for every  $u\in W^{1,\infty}_0(Q; \R^d)$ and $\xi\in \R^{d\times N}$.
\end{defn}

By  \cite[Theorem 5.3(4)]{Dac} it follows that any quasiconvex function is continuous.
The quasiconvexity is a sufficient (and necessary) condition for the lower semicontinuity of an integral functional on $W^{1,p}(\Omega; \R^d)$  with respect to the weak topology of $W^{1,p}(\Omega; \R^d)$. In order to state such results,  let $1 \leq p < +\infty$ and let $g:\mathbb R^{d\times N}\to \R$ be a quasiconvex function, such that \beq\label{fcrescitapb} 
0\leq g(\xi) \leq \beta (1+|\xi|^p) \qquad \hbox{ for every }
\xi\in \R^{d\times N}.
\eeq
Let $G:W^{1,p}(\Omega;\mathbb R^d)$ be the integral functional defined by
\begin{equation*}
\label{intG}
G(u):=\int_\Omega g(\nabla u(y))dy.
\end{equation*}
Then  $G$ is sequentially weakly lower semicontinuous on $W^{1,p}(\Omega; \R^d)$ (see \cite[Theorem 8.4]{Dac}).

%
%
%

If the Borel function $g:\R^{d\times N}\to\R$ fails to be quasiconvex, one can introduce its quasiconvex envelope, namely 
\begin{align}\label{Qg}
Qg:=\sup\{h\,|\, h:\R^{d\times N}\to\R\, ,   h \ \hbox{quasiconvex}\ \hbox{and } h\le g\}. 
\end{align}
	
	\begin{rem}\label{Qgcont}{\rm
It is worth to observe that, being $Qg$ quasiconvex, then $Qg$ is a continuous function (see \cite[Lemma 5.42]{AFP} and \cite[Theorem 5.3]{Dac}).}
\end{rem}

\noindent The following representation formula holds:

\begin{thm}\label{repqcx} {\rm \cite[Theorem 6.9]{Dac}} Let $g:\R^{d\times N}\to\R$ be a  Borel and locally bounded function. Assume that there exists a quasiconvex function $h:\R^{d\times N}\to\R$ such that $g\geq h$. Then for every $\xi\in \R^{d\times N}$ 
	$$Qg(\xi)=\inf\Bigl\{ \int_Q g( \xi+ \nabla u(y))\dy \ :\ u\in W^{1,\infty}_0(Q; \R^d) \Bigr\}\,.$$
\end{thm}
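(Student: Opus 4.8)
Denote by $\mathcal{R}g(\xi):=\inf\{\int_Q g(\xi+\nabla u(y))\,dy:\ u\in W^{1,\infty}_0(Q;\R^d)\}$ the right-hand side; the plan is to show $\mathcal{R}g=Qg$ by proving the two inequalities separately. The easy inclusion is $Qg\le\mathcal{R}g$: if $h\le g$ is quasiconvex, then for every $u\in W^{1,\infty}_0(Q;\R^d)$ the definition of quasiconvexity together with $h\le g$ gives $h(\xi)\le\int_Q h(\xi+\nabla u(y))\,dy\le\int_Q g(\xi+\nabla u(y))\,dy$; taking the infimum over $u$ yields $h\le\mathcal{R}g$, and then the supremum over all such $h$ yields $Qg\le\mathcal{R}g$. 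Note also that testing with $u\equiv 0$ gives $\mathcal{R}g\le g$, and that the standing hypothesis $g\ge h$ with $h$ quasiconvex forces $\mathcal{R}g\ge h>-\infty$ everywhere; combined with the local boundedness of $g$ this shows $\mathcal{R}g$ is real-valued and locally bounded.

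For the reverse inequality $\mathcal{R}g\le Qg$ the plan is to prove that $\mathcal{R}g$ is itself quasiconvex; since $\mathcal{R}g\le g$, the maximality in the definition \eqref{Qg} of $Qg$ then gives $\mathcal{R}g\le Qg$ and hence equality (and, by \cite[Theorem 5.3(4)]{Dac}, continuity of the common function). Concretely, I would fix $\xi\in\R^{d\times N}$, $v\in W^{1,\infty}_0(Q;\R^d)$ and $\varepsilon>0$, and build a competitor $u\in W^{1,\infty}_0(Q;\R^d)$ for $\mathcal{R}g(\xi)$ with $\int_Q g(\xi+\nabla u(y))\,dy\le\int_Q\mathcal{R}g(\xi+\nabla v(y))\,dy+C\varepsilon$. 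To this end I would first replace $v$ by a piecewise affine map with a controlled Lipschitz bound agreeing with $v$ on $\partial Q$, so that $Q$ splits, up to a set of arbitrarily small measure, into small cubes $Q_i=y_i+\rho_i Q$ on which $\nabla v\equiv\eta_i$; on each $Q_i$ pick $w_i\in W^{1,\infty}_0(Q;\R^d)$ with $\int_Q g(\xi+\eta_i+\nabla w_i(y))\,dy\le\mathcal{R}g(\xi+\eta_i)+\varepsilon$, rescale it by $x\mapsto\rho_i\,w_i((x-y_i)/\rho_i)$, add it to $v$ on $Q_i$, and glue the pieces (they match the affine part across $\partial Q_i$ since the rescaled $w_i$ vanish there). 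A change of variables on each $Q_i$ gives $\int_{Q_i}g(\xi+\nabla u(x))\,dx\le|Q_i|\,(\mathcal{R}g(\xi+\eta_i)+\varepsilon)$; summing over $i$, adding the contribution of the exceptional small-measure set and of thin transition layers near $\partial Q_i$ (each bounded by the local supremum of $g$, hence negligible), and letting $\varepsilon\to0$, yields the quasiconvexity inequality $\mathcal{R}g(\xi)\le\int_Q\mathcal{R}g(\xi+\nabla v(y))\,dy$.

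The main obstacle is this last step, and two points require care, precisely where the hypotheses enter: (i) the integrand $y\mapsto\mathcal{R}g(\xi+\nabla v(y))$ must be measurable, which I would obtain by first establishing that $\mathcal{R}g$ is Borel --- indeed, one can argue directly from the infimum representation, the local boundedness of $g$, and the lower bound by the continuous quasiconvex minorant $h$, that $\mathcal{R}g$ is continuous; (ii) the piecewise affine approximation of $v$ must be compatible with passing to the limit in $\int_Q\mathcal{R}g(\xi+\nabla v(y))\,dy$, and the gluing must not lose mass, which again relies on the local boundedness of $g$ to absorb the boundary-layer and exceptional-set errors. Everything else is routine bookkeeping with the change of variables.
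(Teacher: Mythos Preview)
The paper does not prove this statement: it is quoted as \cite[Theorem 6.9]{Dac} and used as a black box, so there is no proof in the paper to compare against. Your outline is essentially the classical argument from Dacorogna's book --- the inequality $Qg\le\mathcal{R}g$ is immediate, and the reverse follows once $\mathcal{R}g$ is shown to be quasiconvex via the piecewise-affine/rescaling/gluing construction you describe.

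One point, however, is underspecified. In (i) you claim continuity of $\mathcal{R}g$ follows ``directly from the infimum representation, the local boundedness of $g$, and the lower bound by the continuous quasiconvex minorant $h$''. This is not clear as written: a continuous lower bound does not make an infimum continuous, and since $g$ is only Borel you cannot vary $\xi$ in $\int_Q g(\xi+\nabla u)\,dy$ and invoke dominated convergence. The standard fix (and the one in \cite{Dac}) is to first prove that $\mathcal{R}g$ is \emph{rank-one convex}: for $\xi,\eta$ with $\mathrm{rank}\,\eta\le1$ and $t\in(0,1)$, build a one-dimensional sawtooth competitor $v$ that is already piecewise affine with the two gradients $(1-t)\eta$ and $-t\eta$, plug in near-optimal $w_0,w_1$ on the two laminae, and obtain $\mathcal{R}g(\xi)\le t\,\mathcal{R}g(\xi+(1-t)\eta)+(1-t)\,\mathcal{R}g(\xi-t\eta)$. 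A finite-valued rank-one convex function is separately convex and hence continuous, which then legitimises both the measurability in (i) and the limit passage in (ii). With this intermediate step inserted, your sketch is complete and matches the source proof.
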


%
%
%
  The following result which holds under very general assumptions, i.e.  when $g=g(x,\xi)$ is only $\L^N \otimes \B_{d\times N}$-measurable function, will be crucial in the proof of Theorem \ref{curlcase2}.
\begin{thm}\label{relaxint2}{\rm \cite[Theorem 4.4.1]{B}} Let $\Omega\subseteq \R^N$ be a bounded open set, let  $1\leq p<+\infty$ and let $g:\Omega\times \R^{d\times N}\to[0,+\infty)$ be a $\L^N\otimes \B_{d\times N}$-measurable function, satisfying \eqref{fcrescitapb}.   
	Then there exists a Caratheodory function $\tilde g:\Omega\times \R^{d\times N}\to[0,+\infty)$,  quasiconvex  in the second variable,  such that  $$\Gamma_{w_{seq}}(G)(u)=\int_\Omega \tilde g(x, \nabla u(x))dx \hbox{ for every } u\in W^{1,p}(\Omega; \R^d),$$
	where $\Gamma_{w_{seq}}(G)$  denotes the sequential lower semicontinuous envelope of $G$ with respect to the weak topology in $W^{1,p}(\Omega;\R^d)$.
	 
	\noindent Moroever $$Qg(x,\xi)\leq \tilde g(x,\xi)  \hbox{ for a.e. } x\in \Omega, \hbox{ and for every } \xi\in \R^{ d\times N}.$$
	
	
\end{thm}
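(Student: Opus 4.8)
The statement to be proven is Theorem \ref{relaxint2}, the relaxation theorem for integral functionals whose density $g=g(x,\xi)$ is merely $\L^N\otimes\B_{d\times N}$-measurable and satisfies the $p$-growth bound \eqref{fcrescitapb}. This is quoted from \cite[Theorem 4.4.1]{B}, so the plan is to reconstruct the argument that produces a Carath\'eodory integrand $\tilde g$, quasiconvex in $\xi$, representing the sequentially weakly lower semicontinuous envelope $\Gamma_{w_{seq}}(G)$, together with the lower bound $Qg(x,\xi)\leq \tilde g(x,\xi)$. The core difficulty is purely measure-theoretic: there is no continuity in $x$ to start from, so one cannot directly freeze $x$ and quasiconvexify, and a genuine localization/measurable-selection machinery is required to build the integrand $\tilde g$.

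\textbf{First steps: reduction and localization.} I would first observe that $\Gamma_{w_{seq}}(G)$ is the greatest sequentially weakly lower semicontinuous functional below $G$ on $W^{1,p}(\Omega;\R^d)$, and that by the $p$-growth bound \eqref{fcrescitapb} it is finite on all of $W^{1,p}(\Omega;\R^d)$ and inherits the same $p$-growth. The plan is to introduce the localized functional, for each open $A\subseteq\Omega$,
\[
\mathcal{G}(u,A):=\int_A g(x,\nabla u(x))\,dx,\qquad u\in W^{1,p}(\Omega;\R^d),
\]
and to consider its relaxation $\overline{\mathcal{G}}(u,A):=\Gamma_{w_{seq}}(\mathcal{G}(\cdot,A))(u)$. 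The key abstract facts to establish are: (i) for fixed $u$, the set function $A\mapsto\overline{\mathcal{G}}(u,A)$ is (the restriction to open sets of) a Borel measure absolutely continuous with respect to $\L^N$, with $\L^N$-density bounded by $\beta(1+|\nabla u|^p)$; (ii) $\overline{\mathcal{G}}(\cdot,A)$ is local, i.e. depends only on $\nabla u$ on $A$. The standard tool for (i) is the De Giorgi--Letta criterion together with the fundamental estimate (a cut-off/gluing argument controlling the error in the relaxation when two competitors are patched across an intermediate region), which is where the growth condition \eqref{fcrescitapb} enters decisively.

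\textbf{Construction of $\tilde g$ and the comparison bound.} Once $\overline{\mathcal{G}}(u,\cdot)$ is a measure absolutely continuous with respect to $\L^N$, I would define, for each $\xi\in\R^{d\times N}$, the integrand candidate by differentiating the relaxed measure along affine functions: for the affine map $u_\xi(x)=\xi x$ and a.e.\ $x_0\in\Omega$,
\[
\tilde g(x_0,\xi):=\limsup_{\rho\to 0^+}\frac{\overline{\mathcal{G}}(u_\xi,B_\rho(x_0))}{\L^N(B_\rho(x_0))}.
\]
The plan is then to prove three things about $\tilde g$. First, a measurability/Carath\'eodory statement: using the continuity of $\xi\mapsto\overline{\mathcal{G}}(u_\xi,A)$ (a consequence of the $p$-growth and the fundamental estimate) one upgrades the density defined on a dense countable set of $\xi$'s to a function that is $\L^N\otimes\B_{d\times N}$-measurable and, after choosing a good representative, Carath\'eodory, with $\xi\mapsto\tilde g(x,\xi)$ quasiconvex for a.e.\ $x$ (quasiconvexity in $\xi$ follows because $\overline{\mathcal G}(\cdot,A)$ is weakly lower semicontinuous, hence its blow-up density must be quasiconvex, using the characterization that blow-up limits of relaxed local functionals are quasiconvex envelopes). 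Second, the integral representation $\overline{\mathcal{G}}(u,A)=\int_A\tilde g(x,\nabla u(x))\,dx$, obtained by approximating a general $u$ by piecewise affine maps, exploiting the measure property and locality to pass from affine pieces to arbitrary gradients, and invoking Remark \ref{Qgcont} to control continuity of the envelope. Third, the comparison $Qg(x,\xi)\leq\tilde g(x,\xi)$: since $\tilde g(x,\cdot)$ is quasiconvex and $\tilde g(x,\cdot)\leq g(x,\cdot)$ a.e.\ (because relaxation only lowers the functional, hence the density), it is a quasiconvex function dominated by $g(x,\cdot)$, and $Qg(x,\cdot)$ is by \eqref{Qg} the greatest such function, giving the inequality. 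The main obstacle throughout is the absence of any joint regularity of $g$ in $x$: every step that would be routine for a Carath\'eodory integrand (freezing $x$, applying Theorem \ref{repqcx} pointwise, measurable selection of recovery sequences) must instead be carried out through the measure-theoretic differentiation above, and the delicate point is proving that the blow-up density $\tilde g$ is genuinely Carath\'eodory and that the integral representation holds simultaneously for all $u$, rather than for a single fixed $u$.
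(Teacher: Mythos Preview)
The paper does not supply its own proof of Theorem \ref{relaxint2}; the result is quoted from \cite[Theorem 4.4.1]{B} and used as a black box. Your outline is broadly the standard localization/blow-up argument underlying that theorem, and the first two blocks (localization, De Giorgi--Letta plus the fundamental estimate, differentiation of $\overline{\mathcal G}(u_\xi,\cdot)$) are correctly sketched.

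However, your third step --- the comparison $Qg(x,\xi)\leq\tilde g(x,\xi)$ --- is argued in the wrong direction and contains a genuine error. You assert that $\tilde g(x,\cdot)\leq g(x,\cdot)$ a.e.\ and then invoke the maximality of $Qg$ in \eqref{Qg}. That reasoning would yield $\tilde g\leq Qg$, the \emph{opposite} inequality. Worse, the pointwise bound $\tilde g(x,\cdot)\leq g(x,\cdot)$ is not available in this generality: $\tilde g(x,\cdot)$ is continuous while $g(x,\cdot)$ is merely Borel, so differentiating $\overline{\mathcal G}(u_\xi,\cdot)\leq\int g(\cdot,\xi)$ gives the inequality only at Lebesgue points of $x\mapsto g(x,\xi)$, a $\xi$-dependent null set that cannot in general be made uniform in $\xi$. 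Indeed, if $\tilde g\leq g$ held, your argument would force $\tilde g=Qg$, contradicting Remark \ref{remBut}, which records that equality holds only under additional hypotheses (upper semicontinuity of $g(x,\cdot)$, or $g=g(\xi)$). The correct route to $Qg\leq\tilde g$ goes the other way: $Qg$ is Carath\'eodory (Theorem \ref{repqcx} represents it as an infimum over a separable family of test functions, and $Qg(x,\cdot)$ is continuous by Remark \ref{Qgcont}), so the functional $u\mapsto\int_\Omega Qg(x,\nabla u)\,dx$ is sequentially weakly lower semicontinuous on $W^{1,p}(\Omega;\R^d)$ and dominated by $G$; hence it lies below $\Gamma_{w_{seq}}(G)(u)=\int_\Omega\tilde g(x,\nabla u)\,dx$, and evaluating both sides on affine maps yields $Qg(x,\xi)\leq\tilde g(x,\xi)$ for a.e.\ $x$ and every $\xi$.
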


\begin{rem}\label{remBut} {\rm By  \cite[Remark 4.4.5]{B} it follows that 
\begin{itemize} 
\item[1.]  if $g=g(\xi)$  then $Qg= \tilde g$;
\item[2.] if  $g(x,\cdot)$ is upper semicontinuous for a.e. $x\in \Omega$  then 
$Qg(x,\xi)= \tilde g(x,\xi)$   for a.e.  $x\in \Omega$, and for every $\xi\in \R^{ d\times N}.$
\end{itemize}}
\end{rem}

\subsection{Envelopes of real functions}\label{2.4}

In this subsection we detail the results of subsection \ref{2.1} in the special case when $X=\mathbb R^{ d\times N}$ and $\tau$ is the natural topology.
\begin{defn}
	\label{lslcdef}
	Let $f:\mathbb R^{ d\times N}\to \overline\R$ be a function. Set $$\mathcal F_{lc}(f):=\{ g:\mathbb R^{ d\times N}\to \overline \R:\  g\leq f,\ g  \hbox{ level convex} \},$$ 
  $$\mathcal F_{ls}(f):=\{ g:\R^{ d\times N}\to {\overline\R}:\  g\leq f,\ g \hbox{ $\tau$-lower semicontinuous} \},$$ 
 and
  $$\mathcal F_{lslc}(f):=\{ g:\R^{ d\times N}\to {\overline\R}:\  g\leq f,\ g  \hbox{ $\tau$-lower semicontinuous and level convex} \}.$$
 
\noindent Consequently define $f^{lc}, f^{ls}, f^{lslc}:\R^{d\times N}\to \overline{\R}$, as
	 $$f^{lc}(\xi):= \sup\{ g(\xi):\  g\in \mathcal F_{lc}(f)  \},$$
	$$f^{ls}(\xi):= \sup\{ g(\xi):\  g\in \mathcal F_{ls}(f)  \},$$
and $$f^{lslc}(\xi):= \sup\{ g(\xi):\  g\in \mathcal F_{lslc} (f) \}.$$
\end{defn}
\noindent An explicit formula to compute $f^{lc}$ in given by Proposition \ref{flcbuild}, applied to $F=f$ and to $X=\mathbb R^{d \times N}$.
 	\begin{rem}\label{fslcls1} If $f:\R^{ d\times N}\to \overline\R$ , 
	by \eqref{envelopelslc}  we have that 
\begin{equation}\label{eqineq}
	f^{lslc}=(f^{lc})^{ls}\leq (f^{ls})^{lc}.
	\end{equation}
Then   if  $f$ is level convex, we get  that
	\begin{equation}
	\label{fslcls}
	f^{lslc}= f^{ls}.
	\end{equation}  
	In particular, thanks to \cite[Theorems 3.4 and 2.7]{BJW},  we get that $f^{ls}$ is a  strong Morrey quasiconvex  function less than or equal to $f$.
\end{rem}

\begin{rem}
	\label{noteqineq}{\rm 
	In general $(f^{lc})^{ls}\lneq (f^{ls})^{lc}$, since the level convex envelope of a lower semicontinuous function might not be lower semicontinuous. To this end, it suffices to consider the function
	$$\chi_{\mathbb R^2\setminus C}=\left\{\begin{array}{ll}
	 0 &\hbox{ if }x \in C,\\
	 1 &\hbox{otherwise}
	 \end{array}
	 \right.$$
	where $C:=\{(x_1,0): x_1 \in \mathbb R\}\cup \{(0,1)\}$. Indeed $\chi_{\mathbb R^2\setminus C}$ is lower semicontinuous but not level convex. On the other hand 
	$(\chi_{\mathbb R^2\setminus C})^{lc}=\chi_{\mathbb R^2\setminus D}$, where $D=\{(x_1,x_2): x_1 \in \mathbb R, 0\leq x_2< 1\}\cup \{(0,1)\}$, which is not closed. Clearly $(\chi_{\mathbb R^2\setminus C})^{lslc}= \chi_{\mathbb R^2\setminus \overline D}< \chi_{\mathbb R^2\setminus D}$.}
\end{rem}

Now we are in position to show a result characterizing the effective domain of $f^{lc}$, based on Carath\'eodory's Theorem (see \cite[Theorem 1.2.1]{CDeA}).

\begin{prop}
	\label{domflc}
	For every $f:\R^{d\times N} \to \overline \R$, it results
	\begin{equation*}\label{1.2.6CDA}
	{\rm dom}(f^{lc})={\rm co}({\rm dom}f)
	\end{equation*}
\end{prop}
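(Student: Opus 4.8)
The plan is to prove the two inclusions ${\rm dom}(f^{lc}) \subseteq {\rm co}({\rm dom}f)$ and ${\rm co}({\rm dom}f) \subseteq {\rm dom}(f^{lc})$ separately, using the explicit representation $f^{lc}(x) = \inf\{\lambda \in \R : x \in {\rm co}\,L_\lambda(f)\}$ from Proposition \ref{flcbuild}. For the first inclusion I would argue that every sublevel set satisfies $L_\lambda(f) \subseteq {\rm dom}f$, hence ${\rm co}\,L_\lambda(f) \subseteq {\rm co}({\rm dom}f)$ for every $\lambda \in \R$. If $x \in {\rm dom}(f^{lc})$, then $f^{lc}(x) < +\infty$, so by the representation formula there is some $\lambda \in \R$ with $x \in {\rm co}\,L_\lambda(f) \subseteq {\rm co}({\rm dom}f)$, giving $x \in {\rm co}({\rm dom}f)$.

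For the reverse inclusion I would invoke Carath\'eodory's Theorem (in the form \cite[Theorem 1.2.1]{CDeA}): if $x \in {\rm co}({\rm dom}f)$, then $x$ can be written as a convex combination $x = \sum_{i=1}^{m} t_i x_i$ with $x_i \in {\rm dom}f$, $t_i \geq 0$, $\sum_i t_i = 1$, where one may take $m \leq d\times N + 1$. Since each $x_i \in {\rm dom}f$, we have $f(x_i) < +\infty$, so there exists $\lambda \in \R$ with $f(x_i) \leq \lambda$, i.e. $x_i \in L_\lambda(f)$, for all $i = 1,\dots,m$ simultaneously (take $\lambda := \max_i f(x_i)$, which is finite because the maximum is over finitely many finite values — this is exactly where the finiteness of $m$ matters). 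Then $x = \sum_i t_i x_i \in {\rm co}\,L_\lambda(f)$, so by the representation formula $f^{lc}(x) \leq \lambda < +\infty$, whence $x \in {\rm dom}(f^{lc})$.

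The main (really the only) subtlety is the reverse inclusion: one needs the representation of points of ${\rm co}({\rm dom}f)$ as convex combinations of \emph{finitely many} points of ${\rm dom}f$, since an arbitrary convex combination of infinitely many points $x_i$ with $\sup_i f(x_i) = +\infty$ would not lie in any single sublevel set ${\rm co}\,L_\lambda(f)$. Carath\'eodory's Theorem supplies exactly this finiteness, bounding the number of terms by $d\times N+1$, which makes $\max_i f(x_i)$ a finite real number and closes the argument. The first inclusion, by contrast, is immediate from monotonicity of the convex hull.
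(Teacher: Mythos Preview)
Your proof is correct and follows essentially the same route as the paper's: both directions rest on the representation formula of Proposition \ref{flcbuild}, and the inclusion ${\rm co}({\rm dom}f)\subseteq{\rm dom}(f^{lc})$ is obtained via Carath\'eodory's Theorem exactly as in the paper. Your first inclusion is in fact slightly cleaner than the paper's (you use monotonicity of the convex hull directly, whereas the paper unnecessarily passes through a Carath\'eodory decomposition there too); one tiny imprecision is that $f$ may take the value $-\infty$, so $\max_i f(x_i)$ need not be a real number, but your preceding sentence ``there exists $\lambda\in\R$ with $f(x_i)\le\lambda$'' is already the correct statement and suffices.
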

\begin{proof}
	The result is achieved by proving a double inequality. If $\xi \in {\rm dom}(f^{lc})$, then there exists $\lambda \in \mathbb R$ such that $\xi \in {\rm co}{L_\lambda}(f)$, thus there exist $\xi_1, \dots, \xi_{d\times N+1} \in L_\lambda(f)$ and $t_i \in [0,1], i=1,\dots d\times N+1$ such  that $\sum_{i=1}^{d\times N+1}  t_i =1 $ and $\xi= \sum_{i=1}^{d\times N+1}t_i \xi_i$. Clearly $\xi_1,\dots \xi_{d\times N+1} \in {\rm dom }f$, hence $\xi \in {\rm co}({\rm dom}f)$. Thus  it remains to prove the opposite inequality: 
	if $\xi \in{\rm co}({\rm dom}f)$, again thanks to Carath\'eodory's Theorem there exist almost $d\times N+1$ points $\xi_1, \xi_2,\cdots, \xi_{d\times N+1} \in {\rm dom}f$ and $t_1,\cdots,t_{d\times N+1} \in [0,1]$ such that  $\sum_{i=1}^{d\times N+1}  t_i =1 $ and  $\xi= \sum_{i=1}^{n+1}  t_i \xi_i $. Hence there exists $\lambda \in \mathbb R$ such that $f(\xi_i) \leq \lambda$ for every $i\in \{ 1,\cdots,n+1\}.$
	Consequently Proposition \ref{flcbuild} entails that $f^{lc}(\xi)\leq \lambda$, i.e. $\xi \in {\rm dom}(f^{lc})$ and this concludes the proof.
\end{proof}

\begin{prop}
	\label{Lcqith coerci}
Let $f:\mathbb R^{ d\times N}\to \overline \R$, then,  for every $\la\in \R$
\begin{itemize}
\item[1.] ${\rm co}L_\lambda(f)\subseteq L_\lambda(f^{lc})$;  
\item [2.] if $f$ is coercive $($i.e. $\lim_{|\xi|\to \infty} f(\xi)=+\infty$$)$,  then  \begin{equation}\label{casocoer1} L_\lambda(f^{lc})\subseteq \overline{L_\lambda(f^{lc})}\subseteq{\rm co}L_\lambda(f^{ls}). \end{equation}In particular 
 \begin{equation}\label{casocoer} L_\lambda(f^{lc})\subseteq co(\bigcap_{\varepsilon >0}\overline{L_{\la+\ee}(f)});
 \end{equation}
\item [3.]   if  $f$ is lower semicontinuous and coercive then ${\rm co}L_\lambda(f)= L_\lambda(f^{lc})$.

\end{itemize}
\end{prop}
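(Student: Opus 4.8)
The plan is to handle the three items in order, since each builds on the previous. For item 1, I would start from the description of $f^{lc}$ in Proposition \ref{flcbuild}, namely $f^{lc}(\xi)=\inf\{\mu\in\mathbb R: \xi\in {\rm co}L_\mu(f)\}$. If $\xi\in {\rm co}L_\lambda(f)$ then $\lambda$ is admissible in this infimum, so $f^{lc}(\xi)\le\lambda$, i.e. $\xi\in L_\lambda(f^{lc})$; this gives ${\rm co}L_\lambda(f)\subseteq L_\lambda(f^{lc})$ directly. (Alternatively, since $f^{lc}\le f$ and $f^{lc}$ is level convex, $L_\lambda(f^{lc})\supseteq L_\lambda(f)$ is a convex set, hence contains ${\rm co}L_\lambda(f)$.)

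For item 2, assume $f$ is coercive. First I would observe that coercivity forces every sublevel set $L_\mu(f)$ to be bounded, and then (using Proposition \ref{flcbuild} together with Carath\'eodory's theorem as in the proof of Proposition \ref{domflc}) that $L_\mu(f^{lc})={\rm co}L_\mu(f)$ is bounded as well, so its closure is a compact convex set. The key inclusion $\overline{L_\lambda(f^{lc})}\subseteq {\rm co}L_\lambda(f^{ls})$ I would get as follows: take $\xi$ in the closure, write it via Carath\'eodory as a convex combination $\xi=\lim_k \sum_{i=1}^{d\times N+1}t_i^k\xi_i^k$ with $f(\xi_i^k)\le\lambda$ (using that points of $L_\lambda(f^{lc})={\rm co}L_\lambda(f)$ are such combinations and that we may choose $\lambda$ the same by the level-set structure; a small $\varepsilon$ may have to be inserted and removed at the end, which is why the statement is phrased with closures). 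By boundedness, pass to a subsequence so that $t_i^k\to t_i$ and $\xi_i^k\to\eta_i$; then $\xi=\sum_i t_i\eta_i$ and each $\eta_i\in\bigcap_{\varepsilon>0}\overline{L_{\lambda+\varepsilon}(f)}=L_\lambda(f^{ls})$ by \eqref{lsflc0} applied to $F=f$, $\tau$ the natural topology. This yields \eqref{casocoer1}; and since $L_\lambda(f^{ls})=\bigcap_{\varepsilon>0}\overline{L_{\lambda+\varepsilon}(f)}$ by that same formula, \eqref{casocoer} is just a rewriting of \eqref{casocoer1}.

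For item 3, suppose in addition $f$ is lower semicontinuous. Then $f^{ls}=f$ and all sublevel sets $L_\lambda(f)$ are already closed, so $L_\lambda(f^{ls})=L_\lambda(f)$ is compact by coercivity, whence ${\rm co}L_\lambda(f)$ is already closed. Combining item 1, the chain \eqref{casocoer1} of item 2, and $L_\lambda(f^{ls})=L_\lambda(f)$ gives
\[
{\rm co}L_\lambda(f)\subseteq L_\lambda(f^{lc})\subseteq \overline{L_\lambda(f^{lc})}\subseteq {\rm co}L_\lambda(f^{ls})=\overline{{\rm co}L_\lambda(f)}={\rm co}L_\lambda(f),
\]
so all these sets coincide, and in particular ${\rm co}L_\lambda(f)=L_\lambda(f^{lc})$.

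I expect the main obstacle to be the bookkeeping in item 2: getting from ``$\xi$ in the closure of a convex hull of a sublevel set'' to ``$\xi$ is an honest convex combination of limit points lying in $\bigcap_\varepsilon\overline{L_{\lambda+\varepsilon}(f)}$'' requires simultaneously (i) the Carath\'eodory representation with a uniform number of points, (ii) a compactness argument on coefficients and points that uses coercivity, and (iii) a careful $\varepsilon$-argument to identify the limit points as elements of $L_\lambda(f^{ls})$ rather than merely $L_{\lambda+\varepsilon}(f^{ls})$ — which is exactly why the clean equality appears only under the extra lower semicontinuity hypothesis in item 3, while item 2 must settle for the closure.
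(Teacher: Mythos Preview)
Your arguments for items 1 and 3 are correct and match the paper. The genuine gap is in item 2, where you assert as a preliminary observation that $L_\mu(f^{lc})={\rm co}\,L_\mu(f)$. This identity is false in general for merely coercive $f$: it is precisely the conclusion of item 3, which requires the additional lower semicontinuity hypothesis. A concrete counterexample (the one the paper records right after the proposition) is $f(\xi)=|\xi|$ for $\xi\neq 0$ and $f(0)=1$: then $L_0(f)=\emptyset$, hence ${\rm co}\,L_0(f)=\emptyset$, while $f^{lc}(\xi)=|\xi|$, hence $L_0(f^{lc})=\{0\}$. The analogy with Proposition \ref{domflc} breaks down because there one only needed \emph{some} finite $\lambda$ with $\xi\in{\rm co}\,L_\lambda(f)$, whereas here you would need $\lambda\le\mu$, and the infimum in Proposition \ref{flcbuild} is not attained in general.

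The correct route, which the paper takes, is to feed precisely this non-attainment into your Carath\'eodory--compactness scheme. Given $\xi\in L_\lambda(f^{lc})$, Proposition \ref{flcbuild} yields a sequence $\lambda_n\searrow f^{lc}(\xi)\le\lambda$ with $\xi\in{\rm co}\,L_{\lambda_n}(f)$; write $\xi=\sum_i t_n^i\xi_n^i$ with $f(\xi_n^i)\le\lambda_n$. Coercivity bounds all the $\xi_n^i$ uniformly (they lie in the bounded set $L_{\lambda_1}(f)$), so after extracting subsequences $\xi_n^i\to\xi^i$, $t_n^i\to\bar t^i$, one obtains $\xi=\sum_i\bar t^i\xi^i$ with $f^{ls}(\xi^i)\le\liminf_n f(\xi_n^i)\le\lim_n\lambda_n=f^{lc}(\xi)\le\lambda$. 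Thus $\xi\in{\rm co}\,L_\lambda(f^{ls})$ directly, without first passing to the closure on the left; the closure inclusion in \eqref{casocoer1} then follows because ${\rm co}\,L_\lambda(f^{ls})$, being the convex hull of a compact set, is closed. Your hedging that ``a small $\varepsilon$ may have to be inserted'' shows you sensed the difficulty, but you located it in the outer closure $\overline{L_\lambda(f^{lc})}$ rather than in the descent $\lambda_n\searrow f^{lc}(\xi)$, which is where the $\varepsilon$-slack actually lives.
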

\begin{proof}[Proof]
\begin{itemize}
\item[1.] It follows by the convexity of $L_\lambda(f^{lc})$ and by the fact that $L_\lambda (f)\subseteq L_\lambda (f^{lc})$.
 \item [2.]
 Assume that $f^{lc}(\xi)\leq \lambda$. By Proposition \ref{flcbuild} there exists a sequence $(\lambda_n)$ converging to $f^{lc}(\xi)$ such that $\xi\in co(L_{\lambda_n} (f)).$
In particular, thanks to the Carath\'eodory's Theorem,  for every $n\in \N$ there exist
	$\xi_n^1, \xi_n^2,\cdots, \xi^{d\times N+1}_n\in L_{\lambda_n}(f)$ and $t^i_n\in[0,1]$, $i\in \{1,\cdots,d\times N+1\}$ such that $\xi=\sum_{i=1}^{d\times N+1} t^i_n \xi_n^i $ and $\sum_{i=1}^{d\times N+1} t^i_n=1 $. 
	Since $ L_{\lambda_n}(f)$  is  bounded by coercivity, without loss of generality, we can assume, up to the extraction of not relabelled subsequences, that for every $i\in \{1,\cdots,d\times N+1\}$ there exist 	 $\lim_{n\to\infty}\xi_n^i=\xi^i$ and  $\lim_{n\to\infty} t^i_n =\bar t^i.$ In follows that $\xi=\sum_{i=1}^{d\times N+1} \bar t^i \xi^i $ and $\sum_{i=1}^{d\times N+1} \bar t^i=1 $.
	By definition of $f^{ls}$ it follows that 
	$$f^{ls}(\xi^i)\leq \liminf_{n\to \infty} f(\xi^i_n)\leq \lim_{n\to \infty} \lambda_n=f^{lc}(\xi)\leq \lambda.$$
	 Therefore $\xi\in 	  {\rm co} L_\lambda(f^{ls})$ and \eqref{casocoer1} follows.
 By  \eqref{casocoer1} and (\ref{lsflc0}), we obtain \eqref{casocoer}.
	\item [3.]  It follows by 1. and 2.
	
			\end{itemize}
	\end{proof}

\begin{rem}{\rm Let $f:\mathbb R^{d\times N} \to \mathbb R$ be defined by
	$$f(\xi)=\left\{\begin{array}{ll}
	|\xi| &\hbox{ if }\xi \not= 0,\\
	1 &\hbox{ if }\xi=0.\end{array}\right.$$
	Then 
	$L_0(f)=\emptyset$, so ${\rm co}L_0(f)=\emptyset$, while $f^{ls}(\xi)=f^{lc}(\xi)= |\xi|$, and so ${L_0}(f^{lc})=\{0\}$.
Thus we cannot expect equality in (i). Moreover this  example proves  also  that in general $ L_\lambda(f^{lc})\not\subseteq \overline{{\rm co}L_\lambda(f)}$ and $ L_\lambda(f^{ls})\not= \overline{L_\lambda(f)}$ showing the sharpness of (\ref{lsflc0}).
}
	\end{rem}
	
The following result specializes \eqref{lsflc0} when $X= \R^{d\times N}$, thus providing a useful description of  the sublevel sets of $f^{lslc}$.
	
\begin{prop}
\label{properties}	
Let $f:\mathbb R^{d\times N}\to \overline\R$. 
Then for every $\la\in \R$ it holds
	  $$L_{\la}(f^{lslc})=\bigcap_{\varepsilon >0}\overline{ co(L_{\la+\ee}(f))}.$$
\end{prop}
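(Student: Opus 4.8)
The plan is to deduce the formula from the general relaxation identity \eqref{lsflc0}, applied to the level convex envelope $F=f^{lc}$ on $X=\mathbb R^{d\times N}$ (with the natural topology $\tau$). Since by \eqref{envelopelslc} one has $f^{lslc}=\Gamma_\tau(f^{lc})$, formula \eqref{lsflc0} gives at once $L_\la(f^{lslc})=\bigcap_{\varepsilon>0}\overline{L_{\la+\varepsilon}(f^{lc})}$, so it remains only to show that this decreasing intersection of closed sets coincides with $\bigcap_{\varepsilon>0}\overline{{\rm co}(L_{\la+\varepsilon}(f))}$. I would prove the identity $L_\la(f^{lslc})=\bigcap_{\varepsilon>0}\overline{{\rm co}(L_{\la+\varepsilon}(f))}$ by two inclusions.

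For "$\supseteq$": since $f^{lc}\le f$ we have $L_{\la+\varepsilon}(f)\subseteq L_{\la+\varepsilon}(f^{lc})$, and the latter set is convex because $f^{lc}$ is level convex; hence ${\rm co}(L_{\la+\varepsilon}(f))\subseteq L_{\la+\varepsilon}(f^{lc})$ (this is precisely Proposition \ref{Lcqith coerci}(1)), so that $\overline{{\rm co}(L_{\la+\varepsilon}(f))}\subseteq\overline{L_{\la+\varepsilon}(f^{lc})}$ for every $\varepsilon>0$, and intersecting over $\varepsilon$ gives the inclusion.

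For "$\subseteq$": here I would invoke the explicit representation $f^{lc}(x)=\inf\{\lambda\in\mathbb R:\ x\in{\rm co}\,L_\lambda(f)\}$ from Proposition \ref{flcbuild}. If $f^{lc}(x)\le\la+\varepsilon$, then since $\la+\varepsilon<\la+2\varepsilon$ there exists $\lambda<\la+2\varepsilon$ with $x\in{\rm co}\,L_\lambda(f)\subseteq{\rm co}\,L_{\la+2\varepsilon}(f)$; thus $L_{\la+\varepsilon}(f^{lc})\subseteq{\rm co}(L_{\la+2\varepsilon}(f))$ and therefore $\overline{L_{\la+\varepsilon}(f^{lc})}\subseteq\overline{{\rm co}(L_{\la+2\varepsilon}(f))}$ for each $\varepsilon>0$. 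Intersecting over $\varepsilon>0$ and using that $\varepsilon\mapsto\overline{{\rm co}(L_{\la+\varepsilon}(f))}$ is nondecreasing, so that $\bigcap_{\varepsilon>0}\overline{{\rm co}(L_{\la+2\varepsilon}(f))}=\bigcap_{\varepsilon>0}\overline{{\rm co}(L_{\la+\varepsilon}(f))}$, yields the remaining inclusion.

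I do not expect a genuine obstacle: the result is essentially a specialization of \eqref{lsflc0} combined with Proposition \ref{flcbuild}, and it holds for every $\la\in\mathbb R$ with no case distinction (when $\la<\inf_{\mathbb R^{d\times N}}f$ both sides are empty). The only point requiring mild care is the interplay between taking closures and the intersection over $\varepsilon$ that defines $L_{\la+\varepsilon}(f^{lc})$; this is handled cleanly by first establishing the plain set inclusions ${\rm co}(L_{\la+\varepsilon}(f))\subseteq L_{\la+\varepsilon}(f^{lc})\subseteq{\rm co}(L_{\la+2\varepsilon}(f))$ and only afterwards passing to closures and to the intersection, so that one never needs to commute a closure with an infinite intersection.
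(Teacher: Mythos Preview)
Your argument is correct, and it is genuinely more direct than the paper's own proof. Both proofs share the same opening move: identifying $L_\la(f^{lslc})=\bigcap_{\varepsilon>0}\overline{L_{\la+\varepsilon}(f^{lc})}$ via \eqref{lsflc0} and \eqref{envelopelslc}, and obtaining the inclusion ``$\supseteq$'' from Proposition~\ref{Lcqith coerci}(1). The divergence is in the inclusion ``$\subseteq$''. The paper does \emph{not} use the sandwich ${\rm co}\,L_{\la+\varepsilon}(f)\subseteq L_{\la+\varepsilon}(f^{lc})\subseteq{\rm co}\,L_{\la+2\varepsilon}(f)$; instead it first establishes the result under a coercivity assumption by invoking Proposition~\ref{Lcqith coerci}(2) (which relies on compactness of sublevel sets and Carath\'eodory's theorem), and then removes coercivity through a four-step approximation: first by the coercive perturbations $f_n=\max\{f,\tfrac1n|\xi|\}$ when $f\ge 0$, then by translation when $\inf f>-\infty$, and finally by truncation $\varphi_n=\max\{f,-n\}$ together with a $\Gamma$-convergence argument when $\inf f=-\infty$.

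Your key observation is that Proposition~\ref{flcbuild} already gives, for every $\varepsilon>0$, the clean containment $L_{\la+\varepsilon}(f^{lc})\subseteq{\rm co}\,L_{\la+2\varepsilon}(f)$ with no hypotheses on $f$ whatsoever, so that the entire approximation machinery is unnecessary. This buys a proof that is shorter by an order of magnitude and avoids any case distinction on the range of $f$; the paper's route, by contrast, illustrates how coercivity interacts with the Carath\'eodory decomposition (Proposition~\ref{Lcqith coerci}(2)), which is of independent use elsewhere in the paper but is not needed here.
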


\proof	
First of all, we notice that, thanks to (\ref{eqineq}) and   (\ref{lsflc0}), we have that 

\begin{equation}\label{1}
	L_\lambda(f^{lslc}) = L_\lambda((f^{lc})^{ls}) =\bigcap_{\delta >0} \overline{L_{\la+\delta}(f^{lc})	},
	\end{equation}
	in particular, Proposition \ref{Lcqith coerci}(1) implies 
	\begin{equation*}\label{1b}
	\bigcap_{\varepsilon >0}\overline{ co(L_{\la+\ee}(f))} \subseteq L_{\la}(f^{lslc}).
	\end{equation*}
	The proof of the opposite inclusion will be developed in several steps.

\begin{itemize}
\item[Step 1. ] First we consider the case when  $f$ is coercive. 
\noindent Under this extra assumption, by applying \eqref{casocoer1}, we have that for every $\la\in \R$ and for every $\delta>0$
\begin{equation}\label{2}\overline{L_{\lambda+\delta}(f^{lc})}\subseteq co(\bigcap_{\varepsilon >0} \overline{L_{\la+\delta+\ee}(f)}).
\end{equation}
By putting together \eqref{1} and \eqref{2}, it follows
\begin{eqnarray}
\nonumber
L_\lambda(f^{lslc}) & \subseteq & \bigcap_{\delta >0}co(\bigcap_{\varepsilon >0} \overline{L_{\la+\delta+\ee}(f)}) \subseteq \bigcap_{\delta >0} \bigcap_{\varepsilon >0} co(\overline{L_{\la+\delta+\ee}(f)})\\
	\nonumber
&=&  \bigcap_{r >0} co(\overline{L_{\la+r}(f)})= \bigcap_{r >0}\overline{ co(L_{\la+r}(f))}
\nonumber
\end{eqnarray}
and this identity concludes the proof  in the coercive case.

%
%
\item[Step 2. ]  In the second step we consider the general case  when $f:\R^{d\times N}\to [0,+\infty]$. We define $f_n(\xi):=\max\{f(\xi), \frac 1 n|\xi|\}.$ 
Since $f\leq f_n$ then $f^{lc}\leq (f_n)^{lc}:=f_n^{lc}$ 
that implies 
   $$f^{lc}\leq (f^{lc})_n\leq f_n^{lc}\leq  f_n$$ for every $n\in \N$.
In particular $$f^{lc}\leq \inf_n f_n^{lc}\leq  \inf f_n=f.$$ 

By Proposition \ref{lcGammalimite}(1), since $(f_n^{lc})$ is monotone, the function $g(\xi):=\inf_n f_n^{lc}(\xi)$ is  level convex. Then $$f^{lc}=\inf_n f_n^{lc}.$$
\noindent Since $f_n$ is coercive,  by applying \eqref{casocoer1}, we have  that for every $n\in \N$, for every $\la \geq 0$ and $\ee>0$ $$L_{\lambda+\ee}(f_n^{lc})\subseteq  co L_{\lambda+\ee}(f_n^{ls})\subseteq  co L_{\lambda+\ee}(f^{ls}).$$
Now,  for fixed  $\la \geq 0$ and $\ee>0$,  if $\xi\in L_\lambda(f^{lc})$ then  for $n=n(\xi)$ big enough we get that  $\xi\in L_{\lambda+\ee}(f_n^{lc}).$
Thus 
 $$L_\lambda(f^{lc})\subseteq   co L_{\lambda+\ee}(f^{ls})$$
that implies  
\begin{equation}\label{3}
\overline{L_\lambda(f^{lc})}\subseteq   \bigcap_{\ee>0}co L_{\lambda+\ee}(f^{ls}).
\end{equation}
Thanks to \eqref{1}, \eqref{3}  and Proposition \ref{Lcqith coerci} (1), we have that 
\begin{eqnarray}
\nonumber
L_\lambda(f^{lslc}) &=&\bigcap_{\delta >0}\overline{	L_{\la+\delta}(f^{lc})}\subseteq  \bigcap_{\delta >0}\bigcap_{\ee>0}co L_{\lambda+\delta+\ee}(f^{ls})
	\\
	\nonumber
	&=& \bigcap_{\ee>0}co L_{\lambda+\ee}(f^{ls}) =  \bigcap_{\ee>0}co  \big(\bigcap_{\delta >0}\overline{L_{ \lambda+\varepsilon+\delta}	( f )}\big)\subseteq 
	\bigcap_{\ee>0} \bigcap_{\delta >0} co \overline{L_{ \lambda+\varepsilon+\delta}	( f )}\\
	\nonumber
&=& \bigcap_{r >0}  co \overline{L_{ \lambda+r}	( f )}= \bigcap_{r >0}\overline{ coL_{\la+r}(f)}
\end{eqnarray}
and this identity concludes the proof.
\item[Step 3. ]  Now we consider the  case  when $f:\R^{d\times N}\to \bar \R$ is such that $\inf f>-\infty$.
Then, it is sufficient to apply the previous step to the non negative function $g:=f- \inf f$ and use the fact that $g^{lc}=f^{lc}-\inf f$ and  $g^{lslc}=f^{lslc}-\inf f$.
\item[Step 4. ] 
Finally, when $f:\R^{d\times N}\to \bar \R$ is such that  $\inf f=-\infty$ we can consider the approximation $\f_n:=\max\{f,-n\}\geq f$.
Then for every $n\in \N$ and $\la \geq -n$, thanks to the previous step, it holds
	  \beq\label{fnlc}\bigcap_{\varepsilon >0}\overline{ co(L_{\la+\ee}(\f_n))}=L_{\la}(\f_n^{lslc}).\eeq

Denote by $\f_n^{lc}$ the function $(\f_n)^{lc}$. Then  $f^{lc}\leq \f_n^{lc}\leq  \f_n$ for every $n\in \N$.
In particular  $$f^{lc}\leq \inf_n \f_n^{lc}\leq  \inf \f_n=f.$$ Applying again Proposition \ref{lcGammalimite}(1),  in light of the monotonicity of $(\varphi_n^{lc})$, it turns out that $g:=\inf_n \f_n^{lc}$ is  level convex. Then $$f^{lc}=\inf_n \f_n^{lc}$$ and, by Proposition \ref{gammaprop} (2)-(3) we have that 
$$f^{lslc}=\Gamma\hbox{-}\lim_{n\to \infty} \f_n^{lc}=\Gamma\hbox{-}\lim_{n\to \infty} \f_n^{lslc}.$$
Then for fixed  $\la \in \R$, and $\xi\in  L_\lambda(f^{lslc})$ there exists a sequence $(\xi_n)$ converging to  $\xi$ such that  for every $\ee>0$ one can find \color{black} $n_0=n_0(\ee)$ such that $$\f_n^{lslc}(\xi_n)\leq  f^{lslc} (\xi)+\ee\leq \lambda +\ee \quad \forall  n\geq n_0$$
that is 
$$(\xi_n)_{n\geq n_0}\subseteq \bigcup_{n\geq n_0} L_{\lambda+\ee}(\f_n^{lslc}) $$  that implies 
$$\xi \in \overline{ \bigcup_{n\geq n_0} L_{\lambda+\ee}(\f_n^{lslc}) }.$$
 By applying \eqref{fnlc}, we get that, for every $\ee>0$ there exists $n_0=n_0(\ee)\in \N$ such that

$$\xi \in  \overline{\bigcup_{n\geq n_0}  \bigcap_{\delta >0}\overline{ co(L_{\la+\ee+\delta}(\f_n))}}  .$$

Since $f\leq \f_n$ for every $n\in \N$ it follows that for every $\ee>0$
$$ \xi  \in \overline{\bigcup_{n\geq n_0}  \bigcap_{\delta >0}\overline{ co(L_{\la+\ee+\delta}(f))}} = \bigcap_{\delta >0}\overline{ co(L_{\la+\ee+\delta}(f))},$$
that implies
$$ L_\lambda(f^{lslc})\subseteq \bigcap_{\ee>0} \bigcap_{\delta >0}\overline{ co(L_{\la+\ee+\delta}(f))}= \bigcap_{\ee>0} \overline{ co(L_{\la+\ee}(f))}. $$

$$ L_\lambda(f^{lslc})\subseteq \overline{\bigcup_{n\geq n_0}  \bigcap_{\delta >0}\overline{ co(L_{\la+\ee+\delta}(\f_n))}}  .$$


\end{itemize}
\qed

\section{Relaxation results}\label{relaxsec}
This section is mainly devoted to the proof of Theorem \ref{relax1}.  First of all we give an equivalent formulation of assumption (H).

 \begin{rem}\label{equiv} {\rm Assumption (H) is equivalent to require  the following property:\\

$(H')$: $f$ is level convex and  there exist two sequences $(\xi_n) \subseteq \R^{d\times N}$ and $(\la_n) \searrow  \inf_{\mathbb R^{d \times N}}f $  such that

\begin{equation}
 	\label{H1}
 	f(\xi_n)\leq \la_n  \hbox{ and } \limsup_{\xi \to 0} f(\xi_n+ \xi)\leq  \la_n\quad  \forall n\in \N.
 	\end{equation}

%
 Indeed, sssume that $(H)$ holds. In order to show that $f$ is level convex, it remains to  check that when $\inf_{\mathbb R^{d \times N}}f= \min_{\R^{d \times N}}f=:\bar \la \in \R$ the sublevel set $L_{\bar \la}(f)$ is convex. This holds  since the sublevel set corrisponding to the minimum value $\bar \la$ satisfies  $$L_{\bar \lambda}(f)=\bigcap_{\lambda >\bar{\lambda}}L_\lambda(f)$$ and $L_\lambda (f)$ is  convex for every $ \lambda >\bar{\lambda}$ by hypothesis. In order to prove \eqref{H1} it suffices to take $(\la_n)$ such that $(\la_n) \to \inf_{\mathbb R^{d \times N}}f$ and choose $\xi_n$  in the interior of 
 $E_{\la_n}.$
 

 Viceversa, assume that $(H')$ holds, thus $L_{\la}(f)$ is convex for any $\la\in \R$ such that  $\la \geq \inf_{\mathbb R^{d \times N}}f$. 
In order to show  that $L_\la(f)$ has nonempty interior for any $\la> \inf_{\mathbb R^{d \times N} } f$, let us  choose   $n$ big enough such that $\la_n<\la$. Let  $0<\epsilon<\la-\la_n$.Thanks to  \eqref{H1} the set    $L_{\la_n+\epsilon}(f)$  has nonempty interior and since $L_{\la_n+\epsilon}(f)\subseteq  L_{\la}(f)$, the same holds for $L_{\la}(f)$. }
 \end{rem}

The proof of Theorem \ref{relax1} relies on the following result, which is a consequence of \cite[Theorem 2.1]{AHM} and exploits arguments as in \cite[Theorem 3.1]{GP}.

\begin{thm}\label{AHM}
	Let $I_C: \mathbb R^{d\times N} \to [0,+\infty]$ be the indicator function of a nonempty open bounded convex set $C$, such that $\underline 0 \in C$, i.e.
	\begin{equation}\label{indicatorC}I_C(\xi):=\left\{\begin{array}{ll}
	0 &\hbox{ if }\xi \in C,\\
	+\infty &\hbox{ if }\xi \not \in C. 
	\end{array}\right.
	\end{equation}
	Let ${\mathcal I}, \overline{\mathcal I}:W^{1,\infty}(\Omega; \mathbb R^d)\to[0,+\infty]$ be the functionals defined  by
	\begin{equation}
	\label{Ical}
		{\mathcal I}(u):= \int_{\Omega}I_C(\nabla u)dx,
		\end{equation}
 and 
\begin{equation}\label{Ibarcal}\overline{\mathcal I}( u):=\int_{\Omega}I_{\overline C}(\nabla u)dx.
	\end{equation}
	Then  $$\Gamma_{L^1}(\mathcal I )(u)=\overline{\mathcal I}(u) \quad
	\hbox{ for every }u \in W^{1,\infty}(\Omega;\mathbb R^d).$$
	

\end{thm}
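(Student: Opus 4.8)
The plan is to prove the two inclusions $\Gamma_{L^1}(\mathcal I)(u) \leq \overline{\mathcal I}(u)$ and $\Gamma_{L^1}(\mathcal I)(u) \geq \overline{\mathcal I}(u)$ separately, both over $u \in W^{1,\infty}(\Omega;\mathbb R^d)$. The lower bound is the cheap one: since $I_{\overline C} \leq I_C$ pointwise, we have $\overline{\mathcal I} \leq \mathcal I$, and $\overline{\mathcal I}$ is $L^1$-lower semicontinuous on $W^{1,\infty}(\Omega;\mathbb R^d)$ because $\overline C$ is closed and convex (so $I_{\overline C}$ is convex and lower semicontinuous, hence the integral functional $\overline{\mathcal I}$ is weakly lower semicontinuous on each $W^{1,p}$, and in particular sequentially $L^1$-lower semicontinuous along sequences bounded in $W^{1,\infty}$, which is what matters for relaxing $\mathcal I$ whose domain forces such bounds). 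By definition of the relaxed functional as the greatest $L^1$-lsc functional below $\mathcal I$, this gives $\Gamma_{L^1}(\mathcal I) \geq \overline{\mathcal I}$.

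For the upper bound, fix $u \in W^{1,\infty}(\Omega;\mathbb R^d)$ with $\overline{\mathcal I}(u) < +\infty$, i.e. $\nabla u(x) \in \overline C$ for a.e. $x$; the goal is to produce a recovery sequence $u_k \to u$ in $L^1$ with $u_k \in W^{1,\infty}(\Omega;\mathbb R^d)$, $\nabla u_k(x) \in C$ for a.e. $x$, and $\mathcal I(u_k) = 0 \to 0 = \overline{\mathcal I}(u)$. This is where \cite[Theorem 2.1]{AHM} enters: the content of that theorem (specialized to a single bounded convex constraint set with $\underline 0$ in its interior) is precisely that the differential inclusion $\nabla v \in \overline C$ a.e. can be approximated, in $L^1$ (indeed uniformly), by solutions of the open inclusion $\nabla v \in C$ a.e., keeping the approximants Lipschitz. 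The hypothesis $\underline 0 \in C$ is the crucial normalization that makes the scaling/shrinking argument work: since $C$ is open, bounded, convex and contains the origin, for each $t \in (0,1)$ one has $t\,\overline C \subset C$, so $\nabla(t u)(x) = t\,\nabla u(x) \in t\,\overline C \subset C$ a.e., and $t u \to u$ in $L^1$ (in fact in $W^{1,\infty}$-weak*) as $t \to 1^-$. Thus $u_t := t u$ is already an admissible recovery sequence, giving $\Gamma_{L^1}(\mathcal I)(u) \leq \liminf_{t\to 1^-}\mathcal I(u_t) = 0 = \overline{\mathcal I}(u)$.

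Combining the two bounds yields $\Gamma_{L^1}(\mathcal I)(u) = \overline{\mathcal I}(u)$ for every $u \in W^{1,\infty}(\Omega;\mathbb R^d)$ with finite right-hand side; when $\overline{\mathcal I}(u) = +\infty$ there is nothing to prove since then $\mathcal I \geq \overline{\mathcal I}$ forces $\mathcal I(v) = +\infty$ for every admissible $v$ near $u$ as well — more precisely, any $L^1$-approximating sequence $u_k$ with $\sup_k \mathcal I(u_k) < +\infty$ would satisfy $\nabla u_k \in C$ a.e. and, being bounded in $W^{1,\infty}$, a subsequence converges weakly* to some $w$ with $\nabla w \in \overline C$ a.e., forcing $w = u$ and $\overline{\mathcal I}(u) < +\infty$, a contradiction; hence $\Gamma_{L^1}(\mathcal I)(u) = +\infty = \overline{\mathcal I}(u)$ in that case too. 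The only genuine obstacle is the upper bound, but the assumption $\underline 0 \in C$ trivializes it via the dilation $u \mapsto t u$; the subtlety that requires care — and where one should lean on \cite{AHM} rather than reprove things — is verifying that the lower semicontinuity of $\overline{\mathcal I}$ with respect to $L^1$-convergence is legitimate on the relevant class, i.e. that relaxation in the $L^1$ topology on $W^{1,\infty}$ coincides with what one expects from the convex integrand theory, which is exactly the framework set up in \cite[Theorem 2.1]{AHM}.
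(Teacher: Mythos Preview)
Your argument is correct. The paper itself does not prove this statement: it records Theorem~\ref{AHM} as a particular case of \cite[Theorem~2.1]{AHM} (a general relaxation/homogenization theorem for unbounded singular integrals in $W^{1,\infty}$) and moves on. What you have written is a direct, self-contained proof that is strictly more elementary than invoking that general machinery. The upper bound via the dilation $u_t = tu$ is the heart of it: since $C$ is open, convex, and contains the origin, the standard fact that the open segment from an interior point to a boundary point of a convex set stays in the interior gives $t\,\overline C \subset C$ for every $t\in(0,1)$, and $u_t \to u$ in $W^{1,\infty}$ (hence in $L^1$) as $t\to 1^-$. The lower bound is the routine direction: convexity and lower semicontinuity of $I_{\overline C}$ make $\overline{\mathcal I}$ sequentially weakly lower semicontinuous on each $W^{1,p}$, and the boundedness of $C$ forces any competing sequence with $\mathcal I(u_n)<\infty$ to be bounded in $W^{1,\infty}$ (gradients bounded by $C$, and then Poincar\'e--Wirtinger plus $L^1$-convergence controls the functions), so one may pass to a weak* convergent subsequence and conclude.

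One expository point worth cleaning up: your write-up is ambivalent about where \cite{AHM} is actually needed. You first invoke it for the recovery sequence, then immediately supersede it with the scaling trick; and in your final paragraph you suggest leaning on \cite{AHM} for the lower semicontinuity of $\overline{\mathcal I}$, but that is precisely the direction that follows from elementary convex-integrand theory and needs no outside input. In fact neither inequality in your proof uses \cite{AHM}; the paper's citation is simply a shortcut to avoid spelling out the argument you gave. What your approach buys is transparency and independence from a heavier external result; what the paper's citation buys is brevity and a pointer to the general framework in which such constraint-set relaxations live.
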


\begin{rem}
	\label{remAHM} 
		{\rm Note that when $\Omega$ is a   bounded open subset  with Lipschitz boundary
		$$C \hbox{ bounded} \Longrightarrow  \Gamma_{w^*}(\mathcal I)= \Gamma_{w^*_{seq}}(\mathcal I)=\Gamma_{L^\infty}(\mathcal I)=\Gamma_{L^1}(\mathcal I).$$
	Since $\mathcal I(u)$ is finite if and only if  $\nabla u(x) \in C$ for a.e. $x\in \Omega$,   the first equality is a consequence of Banach-Alaoglu-Bourbaki's Theorem. 
	The second one follows by Rellich-Kondrachov Theorem. For what concerns the last one, it is trivially observed that $\Gamma_{L^\infty}(\mathcal I)\geq \Gamma_{L^1}(\mathcal I)$. In order to show the converse inequality, we note that if $(u_n)\subseteq W^{1,\infty}(\Omega;\mathbb R^d)$ converges to $u$ in $L^1$ and $\liminf_{n\to \infty} \bar I(u_n)=\lim \mathcal I(u_n)<+\infty$ then $(\nabla u_n(x))_n \in C$ for a.e. $x \in \Omega$. Since  (up to a subsequence) the sequence $(u_n)_n$  pointwise converge to $u$,  by  Morrey's inequality and by  Rellich-Kondrachov's Theorem, we get that  the sequence  $(u_n)_n$  uniformly converges to $u$. }

\end{rem}

Now, inspired by the arguments in \cite[Theorem 3.1]{GP}, we prove our result dealing with the relaxation of the functional $F$ in \eqref{supess}.

\begin{proof}[Proof of Theorem \ref{relax1}]
%
Taking into account  \eqref{fslcls}, by \cite[Remark 4.4]{ABP} the functional $W^{1,\infty}(\Omega;\R^d) \ni u \to \overline F(u):=
\supess_{ x \in \Om}f^{ls}(\nabla u(x))$
is $w^*$- lower semicontinuous.
Therefore  we have that

\begin{align}\label{relineq1}
\supess_{x\in \Om}f^{ls}(\nabla u(x))\leq \Gamma_{w*}(F)(u)\leq \Gamma_{w_{seq}^*}(F)(u)
\end{align}
for every $u \in W^{1,\infty}(\Omega;\mathbb R^d)$.
Then it remains  to prove that  $\Gamma_{w_{seq}^*}(F)(u))\leq \overline{F}(u)$ for every $u \in W^{1,\infty}(\Omega;\mathbb R^d)$.

The proof of this inequality  will be developed in several steps.
\begin{itemize}
\item[Step 1. ] First we assume that $f$ satisfies the further hypotheses that 
\beq\label{suplin}f(\xi)\geq \alpha |\xi| 
\eeq for $\alpha >0$ and that   there exists $\bar \xi$ such that $f(\bar \xi)=\min_{\mathbb R^{d \times N}} f$.
Up to a translation argument there is no loss of generality in assuming $\overline \xi=\underline 0$ and $\min_{\mathbb R^{d \times N}}f=0$.
 
 Let $\overline u \in W^{1,\infty}(\Omega;\mathbb R^d)$  and set
\beq
\label{Mlevelsetflslc}
\lambda:= \supess_{ x \in \Omega}f^{ls}(\nabla \overline u(x)).
\eeq

We determine a sequence $(u_{\varepsilon_n}) \subset W^{1,\infty}(\Omega;\mathbb R^d)$ such that $$u_{\varepsilon_n} \overset{\ast}{\rightharpoonup} \overline u \hbox{ in } W^{1,\infty}(\Omega;\mathbb R^d)$$ and  $$\lim_{n\to \infty}\supess_{ x \in \Om} f(\nabla u_{\varepsilon_n} (x)) \leq \la.$$
  
With this aim for fixed  $\ee>0$ let $$C_\varepsilon:=\{\xi \in \mathbb R^{d\times N} : f(\xi)\leq \la+\varepsilon\},$$
denote by $I_{C_\varepsilon}$ the indicator function of $C_\varepsilon$, i.e.,
\begin{equation*}
\label{indicator}
I_{C_\varepsilon}(\xi):=\left\{
\begin{array}{ll}
0 &\hbox{ if }\xi \in C_{\varepsilon},\\
+\infty &\hbox{otherwise.} 
\end{array}\right.
\end{equation*}
Clearly  $\underline 0 \in C_{\varepsilon}$.  Since $\la+\epsilon>\inf f$ we get that $C_\varepsilon$ is convex,
 and  has nonempty interior. Moreover the coercivity of $f$ guarantees that $C_\varepsilon$ is bounded.  Set
\begin{equation*}\label{Cinfty}
C^\infty:=\{\xi \in \mathbb R^{N\times d}: f^{ls}(\xi)\leq \la\}.
\end{equation*}
By \eqref{Mlevelsetflslc},   $\nabla \overline u(x) \in C^\infty$ for a.e. $x \in \Omega$ and, by Proposition \ref{properties},  it holds 
$$
C^\infty =\bigcap_{\varepsilon >0}{\overline C_\varepsilon}.
$$
Then $\nabla \overline u(x) \in {\overline C_\varepsilon}$ for a.e. $x \in \Omega$ and for every $\ee>0$.

For fixed  $\ee>0$ denote by ${\mathcal G}_\varepsilon$ and  $	\tilde{\mathcal G}_\varepsilon$ the unbounded integral functionals defined in $W^{1,\infty}(\Omega;\mathbb R^d)$ with values in $[0,+\infty]$, as 
\begin{equation*}
\label{GM}
{\mathcal G}_\varepsilon(u):=\int_{\Omega}I_{C_\varepsilon}(\nabla u(x))dx,
\end{equation*}
and
\begin{equation*}
\label{tildeGM}
\tilde{\mathcal G}_\varepsilon(u):=\int_{\Omega}I_{\rm int(C_\varepsilon)}(\nabla u(x))dx.
\end{equation*}
Let $\Gamma_{L^1}({\mathcal G}_\varepsilon)$  and $\Gamma_{L^1}(\tilde{\mathcal G}_\varepsilon)$ be their lower semicontinuous envelopes with respect to the  $L^1$-topology.
Since ${\rm int}{(C_{\varepsilon})}\not =\emptyset$, by \cite[Proposition 1.1.5]{CDeA} we have that $\overline{\rm int(C_{\varepsilon})} = {\overline C_\varepsilon}$. Therefore, by Theorem \ref{AHM} and Remark \ref{remAHM}, we get 
\begin{equation}
\label{sc-}
\Gamma_{w_{seq}^*}(\tilde{ \mathcal G}_\varepsilon)(u)=\int_{\Omega}I_{\overline C_\varepsilon}(\nabla u(x))dx,
\end{equation}
for every $u\in W^{1,\infty}(\Omega;\mathbb R^d)$.

On the other hand, since 
\begin{equation*}
\int_{\Omega}I_{\overline C_\varepsilon}(\nabla u(x))dx\leq \mathcal G_{\varepsilon}(u)\leq \tilde{\mathcal G}_\varepsilon(u),
\end{equation*}
 we get that
$\Gamma_{w_{seq}^*}(\tilde{ \mathcal G}_\varepsilon)=\Gamma_{w_{seq}^*}( \mathcal G_\varepsilon).$

We notice that the latter equality and the representation formula \eqref{sc-}  imply that for every $u\in W^{1,\infty}(\Omega;\mathbb R^d)$ $$\Gamma_{w_{seq}^*}( \mathcal G_\varepsilon)(u)=0 \Longleftrightarrow \nabla u(x) \in {\overline C_\varepsilon}\hbox{ for a.e. }x \in \Omega.$$ 
In particular, if  $u\in W^{1,\infty}(\Omega;\mathbb R^d)$ is such that  $\nabla u(x) \in {\overline C_\varepsilon}\hbox{ for a.e. }x \in \Omega$ then  there exists a sequence $(v^{\ee}_k)$ converging weakly* to $u$ in $W^{1,\infty}(\Omega;\mathbb R^d)$ such that 
$$
0=\int_{\Omega}I_{\overline C_\varepsilon}(\nabla u(x))dx = \lim_{k}\int_{\Omega} I_{C_\varepsilon}(\nabla v^{\ee}_k(x))dx.
$$
Thus, by the regularity of $\Omega$, the previous identity implies that  there exists $\overline{k}$ (depending on $\varepsilon$)   such that for every $k\geq \overline{k}$ 
$$
\left\{\begin {array}{ll}
\displaystyle 
\nabla v^{\ee}_k(x) \in C_\varepsilon \hbox{ for a.e. } x \in \Omega,\\
\\
\|u-v^{\ee}_{k}\|_{L^\infty}\leq \varepsilon, \\
\end{array}\right.
$$
which equivalently means that  for every $k \geq \overline{k}$
\beq\label{epsilon}
\left\{\begin {array}{ll}
\displaystyle 
f(\nabla v^{\ee}_k(x))\leq \la+\varepsilon \hbox{ for a.e. } x \in \Omega, \\
\\
\|u-v^{\ee}_{k}\|_{L^\infty}\leq \varepsilon. \\
\end{array}.\right.
\eeq

%
%
%
Now for every $n\in \N$ let  $\varepsilon_n>0$ be such that $\varepsilon_n\to 0$.  Since   $\nabla \bar u(x) \in \overline{ C}_{\varepsilon_n} \hbox{ for a.e. }x \in \Omega$ and for every $n\in \N$, by  applying \eqref{epsilon} with $\ee_n$, we can find two sequences $(k_n)$ strictly increasing and such that $k_n\geq n$, and $(v^{\ee_n}_{k_n})\subseteq W^{1,\infty}(\Omega;\mathbb R^d)$  satisfying 
\beq\nonumber
\left\{\begin {array}{ll}
\displaystyle 
f(\nabla v^{\ee_n}_{k_n}(x))\leq \la+\varepsilon_n\hbox{ for a.e. } x \in \Omega, \\
\\
\|\overline u-v^{\ee_n}_{k_n}\|_{L^\infty}\leq \varepsilon_n. \\
\end{array}.\right.
\eeq
Thus we can conclude that for every $n\in \mathbb N$ and $\ee_n>0$ there exists $v^{\ee_n}_{k_n}$ such that $\|\overline u-v^{\ee_n}_{k_n}\|_{L^\infty}\leq \varepsilon_n$ and 
$$
\supess_{x \in \Omega}f(\nabla v^{\ee_n}_{k_n}) \leq \la+\varepsilon_n.
$$ 

Thanks to the coercivity assumption \eqref{suplin}, we get that $(v^{\ee_n}_{k_n})$   weakly$\ast$ converges to $\bar u$ in $W^{1,\infty}(\Omega;\mathbb R^d)$.  As consequence, it results that
$$
\Gamma_{w^*_{seq}}(F)(u)\leq \lim_{\varepsilon_n \to 0}\supess_{x \in \Omega}f(\nabla v^{\ee_n}_{k_n})\leq \la.
$$
Thus it suffices to define $u_{\varepsilon_n}:=v^{\ee_n}_{k_n}$, to conclude the proof.

\item[Step 2.] Next we remove the coercivity assumption on $f$, just assuming that $f$ admits minimum and $f(\underline 0)=\min_{\mathbb R^{d \times N}}f=0$.
  
For every $n \in \mathbb N$ and every $\xi \in \mathbb R^{d\times N}$, define $f_n$  the level convex function given by 
\begin{equation*}
\label{fn}
f_n(\xi):=\max\big\{f(\xi), \tfrac{1}{n}|\xi|\big\}.
\end{equation*}

Clearly $f_n$ satisfies all the assumptions in Step 1. Thus, defining $f_n^{ls}:=(f_n)^{ls}$, and denoting by $F_n$ the functional defined as $W^{1,\infty}(\Omega;\R^d) \ni u \to F_n(u):=\supess_{ x \in \Omega}f_n(\nabla u(x))$, 
we deduce that
\beq\label{barFnrep}
\Gamma_{w*}(F_n)(u)=\supess_{x \in \Omega}f_n^{ls}(\nabla u),
\eeq
for every $u \in W^{1,\infty}(\Omega;\R^d)$   

Moreover $F_n$ decreasingly converges to $F$ since 
$F_n(u)= \max\{F(u), \frac{1}{n}\|\nabla u\|_{L^\infty}\}$ (see \cite[Remark 3.7]{GP}).
 Thus, by virtue of Proposition \ref{gammaprop} (2)-(3) we can conclude that 
 \begin{equation}
 \label{uno}
 \Gamma_{w*}(F)(u) =\Gamma(w^*)\hbox{-}\lim_{n \to +\infty}F_n(u)=\Gamma(w^*)\hbox{ -}\lim_{n \to +\infty}\Gamma_{w*}(F_n)(u),
 \end{equation}
 for every $u \in W^{1,\infty}(\Omega;\R^d)$.
 Since $f_n (\xi)\leq f(\xi)+\frac 1 n  |\xi|$ for every $\xi \in \R^{d \times N}$, then  
$$
 f_n^{ls} (\xi) -\tfrac{1}{n}  |\xi|\leq f(\xi) 
$$
for every $\xi \in \R^{d \times N}$.
The continuity of $\tfrac{1}{n}|\cdot|$ entails

\begin{equation} \label{tre}
  f_n^{ls}(\xi)-\tfrac{1}{n}|\xi|\leq f^{ls}(\xi). \end{equation}
that yields to 
\begin{eqnarray}\nonumber 
\supess_{x \in\Omega}f_n^{ ls}(\nabla u(x))&\leq&  \supess_{x \in \Omega}(f^{ls}(\nabla u(x))+\tfrac{1}{n}|\nabla u(x)|)\\
\nonumber
&\leq& \supess_{x \in \in\Omega}f^{ls}(\nabla u(x))+ \tfrac{1}{n}\|\nabla u\|_{L^\infty},
\end{eqnarray}
 for every $u \in W^{1,\infty}(\Omega;\R^d)$.
Thanks to \eqref{barFnrep}, we get that 
$$ \Gamma_{w*}(F_n)(u)\leq  \supess_{x \in \in\Omega}f^{ls}(\nabla u(x))+ \tfrac{1}{n}\|\nabla u\|_{L^\infty}.$$

  By the latter inequality,  by \eqref{uno}  and by \eqref{relineq1} 
  we get  that 
\begin{eqnarray}\nonumber 
	\Gamma_{w*}(F)(u)&=&\Gamma(w*)\hbox{-}\lim_{n\to +\infty}\Gamma_{w*}(F_n)(u)\\
	\nonumber 
	&\leq & \lim_{n}(\supess_{x \in \Omega}f^{ls}(\nabla u(x))+ \tfrac{1}{n}\|\nabla u\|_{L^\infty})\\
	\nonumber 
	&=&\supess_{x \in \Omega}f^{ls}(\nabla u(x)=\overline{ F}(u).
	\end{eqnarray}

\item[Step 3.] 
Now we remove the assumption that $f$ admits a minimum.  We assume that $f$ admits a real infimum. The existence of the real infimum of $f$ guarantees that $F$ also admits a real infimum and they coincide.
 By \eqref{inf1}  it results that $$\inf_{W^{1,\infty}(\Omega,\mathbb R^d)}F(u)= \inf_{W^{1,\infty}(\Omega,\mathbb R^d)}\Gamma_{w*}(F)(u)=\inf_{\mathbb R^{d \times N}}f.$$
Thanks to Remark \ref{equiv} there exist two sequences $(\xi_n) \subseteq \R^{d\times N}$ and $(\la_n)_n \searrow \inf_{\mathbb R^{d \times N}}f $  such that

$$
 f(\xi_n)\leq \la_n  \hbox{ and }  \limsup_{\xi \to 0} f(\xi_n+ \xi)\leq  \la_n\quad  \forall n\in \N .$$
 Then $(u_n) \subseteq W^{1,\infty}(\Omega;\mathbb R^d)$  given by $u_n(x):=\xi_n \cdot x$ is an infimizing sequence since 

\begin{equation}
\label{infima}
\lim_{n \to +\infty}F(u_n)=\lim_{n \to +\infty} f(\xi_n)=\inf_{\mathbb R^{d \times N}}f=\inf_{W^{1,\infty}(\Omega,\mathbb R^d)}F.
\end{equation}
Consider, for every $n \in \mathbb N$ and for every $u \in W^{1,\infty}(\Omega;\mathbb R^d)$ the functional
$$G_n(u):=\max\{F(u+u_n), \la_n\} - \la_n=\max\{F(u+u_n)- \la_n,0\}=  \supess_{x \in \Omega}g_n(\nabla u(x)),$$
where $g_n$ is the function defined as $$\R^{d\times N} \ni \xi \to g_n(\xi):= \max\{f(\xi+\xi_n),\la_n\}-\la_n=  \max\{f(\xi+\xi_n)-\la_n,0 \} \geq 0.$$ 
Then $g_n(0)=0=\min_{\R^{d\times N} }g_n.$
Then $G_n$ verifies all the assumptions in Step 2, $g_n$ being in particular level convex.
Thus applying the previous step and Proposition \ref{gammaprop}(5)  
we obtain  that
\begin{equation}\label{eq3} 
\supess_{x \in \Omega} g_n^{ls}(\nabla u(x))=\Gamma_{w*}(G_n)(u)=\max\{\Gamma_{w*}(F)(u+u_n), \la_n\}-\la_n
\end{equation}
On the other by \eqref{DM1eq}, it results,
$$
g_n^{ls}=\max\{f(\cdot + \xi_n)^{ls},\la_n\}-\la_n.
$$
In particular, for every $\xi \in \R^{d \times N}$,
$$
g_n^{ls}(\xi)=\max\{(f(\cdot + \xi_n)^{ls})(\xi),\la_n\}-\la_n=
\max\{f^{ls}(\xi + \xi_n),\la_n\}-\la_n.
$$
From the latter equality, and the first identity in \eqref{eq3}, we deduce that
$$
\Gamma_{w*}(G_n)(u)=\max\{\supess_{ x \in \Omega}f^{ls}(\nabla u+\nabla u_n), \la_n\} -\la_n.
$$

By the last equality in \eqref{eq3} and a translation argument
$$
\max\{\Gamma_{w*}(F)(u), \la_n\}=\Gamma_{w*}(G_n)(u-u_n)+\la_n=\max\{\supess_{ x \in \Omega}f^{ls}(\nabla u(x)),\la_n\}
$$
Taking the limit as $n\to +\infty$ and exploiting \eqref{inf1} and \eqref{infima}, we have
$$\Gamma_{w*}(F)(u)=\lim_{n \to +\infty}\max\{\Gamma_{w*}(F)(u),\la_n\} =\lim_{n \to +\infty}\max\{\supess_{ x \in \Omega}f^{ls}(\nabla u(x)),\la_n\}=\supess_{x \in \Omega}f^{ls}(\nabla u(x)).
$$

\item[Step 4.] Now we treat the general case, where $\inf_{\mathbb R^d}f=-\infty$.
Defining, for every $m \in \mathbb R^+$ the function
$f_m:= \sup\{f, -m\}$ we have that $f_m$ admits a real infimum and falls into the case described in Step 3.
Thus, if for every $u \in W^{1,\infty}(\Omega;\mathbb R^d)$ we define  $F_m(u):= \supess_{x \in \Omega}f_m(\nabla u(x))$,
then it results that (once again, exploiting the level convexity of $f$, and applying \cite[Proposition 2.6]{GP})
$$
\max\{\Gamma_{w*}(F)(u), -m\}=\Gamma_{w*}(F_m)(u):=\supess_{x \in \Omega}f_m^{ls}(\nabla u(x))=\max\{\supess_{x \in \Omega}f^{ls}(\nabla u), -m\}. 
$$
 The proof is concluded by sending $m \to +\infty$.
\end{itemize}
\end{proof}

\begin{rem}\label{relaxedfunctionals} {\rm 
	 In the same spirit of Remark \ref{remAHM}, the assumptions on $\Omega$ guarantee that if $f=f(\xi)$  is  coercive, then the  relaxed functional $\Gamma_{w*}(F)$ coincides on $W^{1,\infty}(\Omega;\mathbb R^d)$ with the lower semicontinuous envelopes of $F$ with respect to the $L^\infty$ and $L^1$ convergences, i.e. $\Gamma_{w*}(F)=\Gamma_{w_{seq}^*}(F)=\Gamma_{L^\infty}(F)=\Gamma_{L^1}(F)$  by the classical embedding theorems.
 
On the other hand Theorem \ref{relax1}, shows  that even without coercivity assumptions  on $f$, it holds
\begin{equation}\label{w*=w*seq}
\Gamma_{w*}(F)=\Gamma_{w_{seq}^*}(F).
\end{equation}
This fact is not surprising  since the level convexity of $F$ entails the validity of Corollary \ref{weakseq}.}
 \end{rem}

 Thanks to Theorem \ref{relax1}, we can deduce that $f^{ls}$  is the strong Morrey quasiconvex ''envelope'' of $f$,  i.e. the greatest strong Morrey quasiconvex minorant of $f$,   provided that  $f$ satisfies $(H)$.
\begin{cor}\label{corfls}  
	Let $f:\R^{d\times N}\to \overline{\R}$ be a level convex Borel function satisfying (H). Then $f^{ls}$ is the greatest strong Morrey quasiconvex function less than or equal to  $f$.
	\end{cor}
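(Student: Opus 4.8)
The plan is to show the two facets that make up the statement: first, that $f^{ls}$ is itself strong Morrey quasiconvex; and second, that it dominates every strong Morrey quasiconvex minorant of $f$. For the first facet I would argue as follows. By hypothesis $f$ is level convex and satisfies $(H)$, so Remark \ref{fslcls1} gives $f^{lslc}=f^{ls}$, and Theorem \ref{relax1} identifies the relaxed supremal functional as $\Gamma_{w^*}(F)(u)=\supess_{x\in\Omega}f^{ls}(\nabla u(x))$. Being a relaxed functional with respect to a topology, $\Gamma_{w^*}(F)$ is $w^*$-lower semicontinuous; equivalently (using $w^*=w^*_{seq}$ from \eqref{relrep} and the first-countability of bounded sets) it is sequentially weak$^*$-lower semicontinuous on $W^{1,\infty}(\Omega;\R^d)$. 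By the characterization of \cite[Theorem 2.7]{BJW} (weak$^*$ sequential lower semicontinuity of a supremal functional $\iff$ the supremand is strong Morrey quasiconvex), together with the fact that $f^{ls}$ is lower semicontinuous by construction, we conclude that $f^{ls}$ is strong Morrey quasiconvex. Here I should double-check that the cited characterization applies to $\overline\R$-valued supremands, or else first reduce to the finite-valued case via the $\arctan$ trick of Remark \ref{arctan}, which preserves both level convexity and the envelope operation by Proposition \ref{Functionalslslc}.

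For the second facet, let $g\le f$ be any strong Morrey quasiconvex function. By \cite[Proposition 2.5]{BJW} (or the remark preceding the statement that strong Morrey quasiconvex functions are lower semicontinuous), $g$ is lower semicontinuous; hence $g$ belongs to the class $\mathcal F_{ls}(f)$ of lower semicontinuous minorants of $f$, so by definition $g\le f^{ls}$. This gives that $f^{ls}$ is an upper bound, and since we have just shown $f^{ls}$ is itself strong Morrey quasiconvex and $\le f$, it is the greatest such function.

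The main obstacle, to my mind, is purely a matter of bookkeeping with the extended-real-valued setting: the lower semicontinuity characterizations in \cite{BJW} are typically stated for real-valued (or locally bounded) supremands, whereas here $f:\R^{d\times N}\to\overline\R$. The clean way around this is to note that all the relevant operations — composing with a continuous strictly increasing $\Phi:\overline\R\to[a,b]$, taking $\Gamma$-relaxation, taking level-convex and lower semicontinuous envelopes, and testing strong Morrey quasiconvexity via \eqref{sMqcx} — commute with $\Phi$ (by Proposition \ref{Functionalslslc} and \eqref{DM1eq}), so one may replace $f$ by $\arctan f$, run the above argument for the bounded function, and then transfer back. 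A secondary point to be careful about is that $(H)$ is exactly what is needed to invoke Theorem \ref{relax1} in the first step; without it the identification $\Gamma_{w^*}(F)=\supess f^{ls}$ need not hold (cf. the discussion around \cite[Example 2.7]{PNodea}), so the hypothesis cannot be dropped.
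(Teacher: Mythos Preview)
Your proof is correct, but the division of labour differs from the paper's in an instructive way. The paper disposes of the first facet (that $f^{ls}$ is strong Morrey quasiconvex) in one stroke via Remark~\ref{fslcls1}: since $f$ is level convex, $f^{ls}=f^{lslc}$ is level convex and lower semicontinuous, hence strong Morrey quasiconvex directly by \cite[Theorems 3.4 and 2.7]{BJW}, without invoking Theorem~\ref{relax1} or $(H)$ at all. For the second facet (maximality), however, the paper takes the longer route: given a strong Morrey quasiconvex $h\le f$, it passes to the supremal functional $S_h(u)=\supess_Q h(\nabla u)$, which is sequentially weak$^*$ lower semicontinuous by \cite[Theorem 2.6]{BJW}, hence $S_h\le \Gamma_{w^*}(F)$; only then does Theorem~\ref{relax1} (and thus $(H)$) enter, to identify $\Gamma_{w^*}(F)$ with $\supess f^{ls}(\nabla\cdot)$, and evaluation on affine maps yields $h\le f^{ls}$. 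Your second-facet argument is more elementary: you simply use that strong Morrey quasiconvex functions are lower semicontinuous (\cite[Proposition 2.5]{BJW}), so any such minorant of $f$ lies in $\mathcal F_{ls}(f)$ and is automatically below $f^{ls}$. A pleasant consequence of combining the paper's first facet with your second is that the corollary in fact holds for every level convex Borel $f$, with no need for $(H)$; the hypothesis $(H)$ and Theorem~\ref{relax1} are not actually required for this particular statement.
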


\begin{proof}
 Thanks to Remark \ref{fslcls1}, it is sufficient to show that $
h\leq f^{ls}
$ for every  strong Morrey quasiconvex function  such that $ h \leq f$. 
Let  $h:\R^{d\times N}\to \overline{\R}$ be a strong Morrey quasiconvex function such that $ h \leq f$. Then, if $Q=]0,1[^N$, 
 the associated supremal functional 	$W^{1,\infty}(Q;\R^d) \ni u \to S_h(u):=\supess_{ x \in Q} h(\nabla u)$ satisfies   $S_h\leq F$ on $W^{1,\infty}(Q;\R^d)$ and, by \cite[Theorem 2.6]{BJW},  is a  $w^*_{seq}$-lower semicontinuous functional. Then \eqref{w*=w*seq} and Theorem \ref{relax1} imply that
	$$S_h(u)\leq \Gamma_{w*}(F)(u) \hbox{ for every }u \in W^{1,\infty}(Q;\R^d)$$
	and evaluating this latter expression on affine functions $u(x):=\xi \cdot x$, with $\xi \in \mathbb R^{d \times N}$, we get $
h\leq f^{ls}.
$
 \end{proof}
 
%
%
%
%

Theorem \ref{relax1} allows us to extend the relaxation results for indicator functionals provided by Theorem \ref{AHM} to the case where the convex set is unbounded, and not necessarily open, and with no requirement that $\underline 0\in {\rm int}C$.


\begin{cor}\label{unbddint}Let $\Omega$ be a bounded  open set of $\R^N$ with Lipschitz boundary.
	Let $C\subseteq \mathbb R^{d \times N}$ be a convex Borel set with nonempty interior. 
Let ${\mathcal I}, \overline{\mathcal I}:W^{1,\infty}(\Omega;\mathbb R^d)\to[0,+\infty]$ be the functionals defined  by
\eqref{Ical} and \eqref{Ibarcal}. Then
 \beq\label{secondeq}
 \overline{\mathcal I}(u) = \Gamma_{w*} (\mathcal I)(u) = \Gamma_{w^*_{seq}} (\mathcal I)(u) \quad \forall u \in W^{1,\infty}(\Omega;\R^d).
 \eeq

\end{cor}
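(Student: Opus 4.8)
The plan is to reduce the statement to Theorem \ref{relax1} by choosing the supremand to be the indicator function $I_C$ of the convex set $C$, and then to upgrade the equality from $\Gamma_{w^*}$ to the other topologies exactly as in Remark \ref{remAHM}. First I would set $f := I_C$, which takes values in $\{0,+\infty\}\subseteq\overline\R$, and observe that $f$ is level convex (its only nontrivial sublevel set is $C$, which is convex) and Borel (since $C$ is a Borel set). The hypothesis $(H)$ is satisfied: for every $\lambda > \inf f = 0$ we have $L_\lambda(f) = C$, which is convex and has nonempty interior by assumption. Hence Theorem \ref{relax1} applies to $F(u) = \supess_{x\in\Omega} I_C(\nabla u(x))$, giving $\Gamma_{w^*}(F)(u) = \Gamma_{w^*_{seq}}(F)(u) = \supess_{x\in\Omega} f^{ls}(\nabla u(x))$ for every $u \in W^{1,\infty}(\Omega;\R^d)$.

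Next I would identify $f^{ls}$ with $I_{\overline C}$. Since $f = I_C$ is the indicator of $C$, its lower semicontinuous envelope is the indicator of the closure $\overline C$; concretely, $f^{ls}(\xi) = 0$ if $\xi \in \overline C$ and $+\infty$ otherwise. (One can check this directly from $L_\lambda(f^{ls}) = \bigcap_{\varepsilon>0}\overline{L_{\lambda+\varepsilon}(f)} = \overline C$ for $\lambda \geq 0$ via \eqref{lsflc0}.) Consequently $\supess_{x\in\Omega} f^{ls}(\nabla u(x)) = 0$ if $\nabla u(x) \in \overline C$ for a.e.\ $x\in\Omega$, and $+\infty$ otherwise, which is exactly $\overline{\mathcal I}(u) = \int_\Omega I_{\overline C}(\nabla u(x))\,dx$ (the integral of a $\{0,+\infty\}$-valued function equals its essential supremum over a set of positive measure). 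Similarly $F(u) = \mathcal I(u)$. Thus Theorem \ref{relax1} already yields $\overline{\mathcal I}(u) = \Gamma_{w^*}(\mathcal I)(u) = \Gamma_{w^*_{seq}}(\mathcal I)(u)$, which is \eqref{secondeq}.

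The only remaining point is that the proof of Theorem \ref{relax1} as presented uses Remark \ref{arctan} (composition with $\arctan$) to reduce to finite-valued supremands, and here $f = I_C$ is genuinely $\{0,+\infty\}$-valued; the $\arctan$ trick handles this cleanly since $\arctan\circ I_C$ is bounded and the reduction is valid, so no extra work is needed. I would also remark, for completeness, that when $C$ is additionally bounded one recovers the stronger chain $\Gamma_{w^*}(\mathcal I) = \Gamma_{w^*_{seq}}(\mathcal I) = \Gamma_{L^\infty}(\mathcal I) = \Gamma_{L^1}(\mathcal I)$ of Remark \ref{remAHM}, but for unbounded $C$ only the weak$^*$ statements survive. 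The main (indeed only) conceptual obstacle is verifying that the hypotheses of Theorem \ref{relax1} — in particular $(H)$ — are met by the indicator function; everything else is bookkeeping and the translation between supremal and integral forms of indicator functionals.
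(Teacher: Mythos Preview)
Your proof is correct and in fact more direct than the paper's. The paper introduces the auxiliary supremal functional $G(v) = \supess_{x\in\Omega} I_C(\nabla v)$ and applies Theorem \ref{relax1} to it, but then does \emph{not} identify $G$ with $\mathcal I$; instead it argues by cases, invoking \cite[Proposition 3.3]{DM} (a topological neighbourhood argument) to transfer $\Gamma_{w^*}(G)(u) = +\infty$ to $\Gamma_{w^*}(\mathcal I)(u) = +\infty$, and a separate recovery-sequence argument to transfer $\Gamma_{w^*_{seq}}(G)(u) = 0$ to $\Gamma_{w^*_{seq}}(\mathcal I)(u) = 0$. Your observation that $G$ and $\mathcal I$ are literally the same functional on $W^{1,\infty}(\Omega;\R^d)$ --- both equal $0$ when $\nabla u \in C$ a.e.\ and $+\infty$ otherwise, since for a $\{0,+\infty\}$-valued integrand the essential supremum and the integral coincide --- collapses this two-case argument into a single line: once $G = \mathcal I$, Theorem \ref{relax1} delivers \eqref{secondeq} immediately. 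What your approach buys is economy and transparency; the paper's route is a template that would still work if the supremal and integral functionals were merely comparable rather than identical, but no such generality is needed here.
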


\begin{proof}

First we show that for every $u\in W^{1,\infty}(\Omega;\mathbb R^d)$ 
\beq\nonumber
	\overline{\mathcal I}(u)\leq \Gamma_{w*}(\mathcal I)(u). 
\eeq
	Without loss of generality we assume that $\overline{\mathcal I}(u)= +\infty$, then there exists a subset $E$  of $\Omega$ with positive measure such that $\nabla  u(x) \not \in \overline{C}$ for every $x \in E$. Now, consider the functional $G: v \in W^{1,\infty}(\Omega;\mathbb R^d)\to \supess_{x \in\Omega}I_C(\nabla v$). Since $g:=I_C$ satisfies $(H)$, then  Theorem \ref{relax1} guarantees that \\  $\Gamma_{w*}(G)(u)= \supess_{x\in\Omega}I_{\overline C}(\nabla u)$. Thus $$\Gamma_{w*}G(u)=+\infty.$$ Thus, by \cite[Proposition 3.3]{DM}, there exists a neighborhood $U$ of $u$ (with respect to the weak* topology of $W^{1,\infty}(\Omega;\mathbb R^d)$) such that $\supess_{x \in \Omega}I_C(w)=+\infty$ for every $w$ in $U$. This, in turn, implies that $\int_\Omega I_C(\nabla w)dx=+\infty$ for every $w\in U$, i.e. $\Gamma_{w*}(\mathcal I)( u)=+\infty$.
	
\noindent Since $\Gamma_{w*}(\mathcal I)\leq \Gamma_{w_{seq}^*}(\mathcal I)$,	in order to conclude the proof, it is sufficient to show that for every $u \in W^{1,\infty}(\Omega;\mathbb R^d)$ 	 \begin{align*}
	\Gamma_{w^*_{seq}}(\mathcal I)(u)\leq \overline{\mathcal I}(u).
	\end{align*}
Without loss of generality, assume that  $u\in W^{1,\infty}(\Omega;\R^d)$ is such that
$ \int_\Omega I_{\overline C}(\nabla u (x))dx =0$, 
i.e. $\nabla u(x) \in \overline C$ for a.e. $x \in \Omega$.
Thus, arguing as above,  we have that  $\Gamma_{w^*}(G)(u)= \Gamma_ {w_{seq}^*}(G)(u)=\supess_{x \in\Omega}I_{\overline C}(\nabla u(x))=0$. 
In particular there exists a  sequence $(u_n)\subseteq  W^{1,\infty}(\Omega;\mathbb R^d)$, such that $u_n \overset{\ast}{\rightharpoonup}  u$ in $W^{1,\infty}(\Omega;\mathbb R^d)$ and
$$0=\supess_{x \in\Omega}I_{\overline C}(\nabla u(x))= \lim_{n\to \infty}\supess_{x \in\Omega}I_C(\nabla u_n (x)).
$$ Consequently there exists $\bar n \in \mathbb N$ such that $\nabla u_n(x) \in C$ for a.e. $x \in \Omega$ and for every $n > \bar n$, and this in turn entails that $\int_\Omega I_{C}( \nabla u_n (x)) dx = 0$ for every $n > \bar n$.
Finally it results
$$ \Gamma_{w_{seq}^*}(\mathcal I)\leq  \lim_{n\to \infty}\int_{\Omega}I_C(\nabla u_n (x))dx=0=\overline{\mathcal I}(u)$$
and this concludes the proof. 
	\end{proof}

\begin{rem}{\rm We underline that the above result has been obtained by a self-contained argument. On the other hand, as observed in Remark \ref{relaxedfunctionals}, the convexity assumption on $C$ allows to obtain the second equality in \eqref{secondeq}  directly by   Corollary \ref{weakseq}.}
	\end{rem}
\section{The $L^p$-approximation}\label{Lpapproximation}
In this section we prove Theorem \ref{curlcase2},
in details, we study 
$\Gamma$-convergence, as $p\to +\infty$,  of the functionals $F_p: C(\bar \Omega;\R^d)\to [0,+\infty)$  given by
\beq\nonumber F_p(u):=\left\{\begin {array}{cl}
\displaystyle \left( \int_{\Om} f^p(x,\nabla u(x))dx \right)^{1/p}
&  \hbox{if } \, u\in W^{1,p}(\Omega;\R^d),\\
+\infty  & \hbox{otherwise}.
\end{array}\right.
\eeq
where $f :\Omega \times \R^{d\times N}$ is $\L^N\otimes \B_{d\times N}$ function satisfying the growth condition \eqref{fcrescita2}.  We show that, as $p\to\infty$,  $(F_p)_{p\geq 1}$ $\Gamma$-converges with respect to  the uniform convergence to the functional $\bar F:C(\bar \Omega;\R^d)\to [0,+\infty)$ given by \eqref{Finfty2}.

With this aim, we first prove the following result, containing an $L^p$- approximation for $f^{lslc}$, that will be useful in the proof of some particular cases of Theorem \ref{curlcase2}. It generalizes \cite[Proposition 2.9]{AP0} where $f$ is assumed to be level convex and lower semicontinuous.
\begin{prop}\label{lev}
	Let \(f:\R^{d\times N}\rightarrow \R\) be  a  Borel function satisfying  
	\beq \label{coe} f(\xi)\geq \alpha |\xi|      
	\eeq
	for a fixed $\alpha>0$ and for every $\xi\in\R^{d\times N}$.  For every $p\ge 1$, let $(f^p)^{**}$ be the lower semicontinuous and convex envelope of $f^p$.
	Then 
	\beq \label{p-approx}\lim_{p\to\infty}((f^p)^{**})^{1/p}(\xi)= f^{lslc}(\xi).
	\eeq
Moreover if $f$ is level convex, then 
	\beq \label{plevconv}\lim_{p\to\infty}((f^p)^{**})^{1/p}(\xi)= \lim_{p\to\infty}(Qf^p)^{1/p}(\xi)= f^{ls}(\xi)\,
	\eeq
	\noindent where $Qf^p:=Q(f^p)$ is the quasiconvex envelope of $f^p$ in \eqref{Qg}.
	

\end{prop}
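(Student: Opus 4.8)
The plan is to display $p\mapsto((f^p)^{**})^{1/p}$ as a non-decreasing family of lower semicontinuous, level convex minorants of $f$, to identify its limit, and to close the estimate from below by comparison with $f^{lslc}$ via the result \cite[Proposition 2.9]{AP0} already available for level convex, lower semicontinuous, coercive densities. First I would record the elementary consequences of \eqref{coe}: since $\alpha^p|\cdot|^p$ is convex and continuous, $\alpha^p|\cdot|^p\le(f^p)^{**}\le f^p$, so $\alpha|\cdot|\le((f^p)^{**})^{1/p}\le f$; likewise $\alpha|\cdot|\le f^{lslc}\le f$, so that $f^{lslc}$ is a real-valued, level convex, lower semicontinuous function still satisfying \eqref{coe}. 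Next, for $1\le p<q$ the function $((f^p)^{**})^{q/p}$ is convex (composition of the increasing convex map $t\mapsto t^{q/p}$ with the non-negative convex function $(f^p)^{**}$), lower semicontinuous, and bounded above by $(f^p)^{q/p}=f^q$; hence $((f^p)^{**})^{q/p}\le(f^q)^{**}$ and, taking $q$-th roots, $((f^p)^{**})^{1/p}\le((f^q)^{**})^{1/q}$. Therefore $g_\infty:=\lim_{p\to\infty}((f^p)^{**})^{1/p}=\sup_{p\ge1}((f^p)^{**})^{1/p}$ exists and is finite.

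Each $((f^p)^{**})^{1/p}$ is lower semicontinuous (continuous increasing image of the lower semicontinuous function $(f^p)^{**}$) and level convex (its $\lambda$-sublevel set equals $\{(f^p)^{**}\le\lambda^p\}$, which is convex); since a supremum of lower semicontinuous, level convex functions enjoys both properties (Proposition \ref{lcGammalimite}), $g_\infty$ is a lower semicontinuous, level convex minorant of $f$, and hence $g_\infty\le f^{lslc}$. For the opposite inequality, $f^{lslc}\le f$ yields $((f^{lslc})^p)^{**}\le(f^p)^{**}$, so $g_\infty\ge\sup_p(((f^{lslc})^p)^{**})^{1/p}$; as $f^{lslc}$ is a real-valued, level convex, lower semicontinuous function satisfying \eqref{coe}, \cite[Proposition 2.9]{AP0} applies to it and gives $\sup_p(((f^{lslc})^p)^{**})^{1/p}=(f^{lslc})^{lslc}=f^{lslc}$. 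This proves \eqref{p-approx}.

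Assume now that $f$ is level convex. Then $f^{lslc}=f^{ls}$ by \eqref{fslcls}, so \eqref{p-approx} already gives $\lim_p((f^p)^{**})^{1/p}=f^{ls}$. Since a convex function is quasiconvex, $(f^p)^{**}\le Qf^p\le f^p$, whence $\liminf_p(Qf^p)^{1/p}\ge\lim_p((f^p)^{**})^{1/p}=f^{ls}$. For the matching upper bound I would use that a real-valued level convex function agrees a.e.\ with its lower semicontinuous envelope, so $f=f^{ls}$ a.e.; because $Qf^p$ is continuous (Remark \ref{Qgcont}) while $(f^{ls})^p$ is lower semicontinuous, the a.e.\ inequality $Qf^p\le f^p=(f^{ls})^p$ is in fact valid everywhere (its validity set is closed and of full measure, hence all of $\R^{d\times N}$). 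Thus $Qf^p$ is a quasiconvex minorant of $(f^{ls})^p$, so $Qf^p\le Q((f^{ls})^p)$, and the reverse inequality being clear from $(f^{ls})^p\le f^p$, we get $Qf^p=Q((f^{ls})^p)$. Applying \cite[Proposition 2.9]{AP0} to the level convex, lower semicontinuous, coercive function $f^{ls}$ now gives $\lim_p(Qf^p)^{1/p}=\lim_p(Q((f^{ls})^p))^{1/p}=f^{ls}$, which completes \eqref{plevconv}.

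The monotonicity and the bound $g_\infty\le f^{lslc}$ are soft. The substantive step is the lower bound $g_\infty\ge f^{lslc}$, which I reduce — after verifying that $f^{lslc}$ is itself an admissible density there — to \cite[Proposition 2.9]{AP0}; I expect this to be the main obstacle, or rather the place where the argument must rely on the already delicate vectorial result for level convex, lower semicontinuous densities. The other point requiring care, in the quasiconvex half of \eqref{plevconv}, is the identification $Qf^p=Q((f^{ls})^p)$: it rests on two facts about real-valued level convex functions — that they are locally bounded (so that the representation Theorem \ref{repqcx} and the continuity of the quasiconvex envelope are available for $f^p$) and that they coincide a.e.\ with their lower semicontinuous envelope. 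An alternative route for this upper bound, valid under the extra hypothesis $(H)$, passes through $\Gamma_{w*}(F)=\Gamma_{L^1}(F)$ (Theorem \ref{relax1}, Remark \ref{relaxedfunctionals}) and a boundary-layer modification imposing affine boundary data on the unit cube.
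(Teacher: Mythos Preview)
Your argument for \eqref{p-approx} is correct and matches the paper's line for line: monotonicity of $p\mapsto((f^p)^{**})^{1/p}$, each $((f^p)^{**})^{1/p}$ is a lower semicontinuous level convex minorant of $f$ (hence bounded above by $f^{lslc}$), and the lower bound via \cite[Proposition 2.9]{AP0} applied to $f^{lslc}$.

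The gap is in your upper bound for $(Qf^p)^{1/p}$ in the level convex case. You claim that the validity set $\{Qf^p\le(f^{ls})^p\}$ is closed because $Qf^p$ is continuous while $(f^{ls})^p$ is lower semicontinuous. This is false: for $g$ continuous and $h$ lower semicontinuous, the set $\{g\le h\}$ need \emph{not} be closed---take $g\equiv 0$ and $h=-\chi_{\{0\}}$ on $\R$, which is lsc, so that $\{g\le h\}=\R\setminus\{0\}$. What \emph{is} closed is $\{h\le g\}$, the zero sublevel set of the lsc function $h-g$; you have the direction reversed. Hence the a.e.\ inequality cannot be upgraded to a pointwise one by this route, and the detour through $Qf^p=Q((f^{ls})^p)$ followed by a second invocation of \cite{AP0} collapses. (Your preliminary claim that a coercive real-valued level convex $f$ agrees a.e.\ with $f^{ls}$ is in fact true---the set $\{f>f^{ls}\}$ is contained in a countable union $\bigcup_{r\in\QQ}\partial L_r(f)$ of boundaries of convex sets---but it is neither justified in your write-up nor needed.)

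The paper closes this step in one line and avoids the detour entirely: $(Qf^p)^{1/p}$ is continuous by Remark \ref{Qgcont}, hence lower semicontinuous, and $(Qf^p)^{1/p}\le f$; therefore $(Qf^p)^{1/p}\le f^{ls}$ directly from the definition of $f^{ls}$ as the greatest lower semicontinuous minorant. Sandwiching with $((f^p)^{**})^{1/p}\le(Qf^p)^{1/p}$ and \eqref{p-approx} then yields \eqref{plevconv} immediately, with no need for the a.e.\ argument or the alternative boundary-layer route you sketch.
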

\begin{proof} 
	Clearly the family $((f^p)^{**})^{1/p})_{p}$ is not decreasing and for every $\xi \in \R^{d\times N}$  and $ p \in [1,+\infty)$ we have that
	$$
	((f^p)^{**})^{1/p}(\xi) \leq f(\xi).
	$$
	Since $((f^p)^{**})^{1/p}$ is lower semicontinuous and level convex, it results that
	\beq\label{CDPest0}
	((f^p)^{**})^{1/p}(\xi) \leq f^{lslc}(\xi) 
	\eeq
	for every  $\xi \in \R^{d\times N},$  and $ p\in [1,+\infty).$
	\noindent Thus the first inequality in \eqref{p-approx} follows as $p \to +\infty$.   
	Moreover, by \cite[Proposition 2.9]{AP0} applied to $ f^{lslc}$, we have that 
	\beq\label{CDPest}  f^{lslc}(\xi)=\lim_{p\to\infty}(((f^{lslc})^p)^{**})^{1/p}(\xi)\leq \lim_{p\to\infty} ((f^p)^{**})^{1/p}(\xi) 
	\eeq
	for every $\xi \in \R^{d\times N}$.
Now we assume that $f$ is level convex. By (\ref{p-approx}) and (\ref{fslcls})  we get  that $$ 	f^{ls}(\xi)= \lim_{p\to\infty} ((f^p)^{**})^{1/p}(\xi)$$

		\noindent We note that for every fixed  $p\geq 1$ the function $(f^p)^{**}$ is  quasiconvex (see Definition \ref{defqcx}). Then $(f^p)^{**}\leq Qf^p\leq   f^p$ that yields to  $((f^p)^{**})^{1 /p}\leq (Qf^p)^{ 1 /p}\leq   f.$  By the continuity of $Qf^p$ (see Remark \ref{Qgcont}), 
		it follows that for every $p\geq 1$  \beq\label{f*qf}((f^p)^{**})^{1/ p}\leq (Qf^p)^{1/ p}\leq    f^{ls}.\eeq
		By applying  H\"older's inequality, it is easy to show that  the family   $(\big ( Qf^p\big)^{1/p} )_{p}$ is not  decreasing. 
		So, by \eqref{f*qf} we get that 
		$$		f^{ls}(\xi)=\lim_{p\to\infty} ((f^p)^{**})^{1/p}(\xi) \leq \lim_{p\to\infty}(Qf^p)^{1/p}(\xi) \leq    f^{ls}(\xi),$$
		for every $\xi \in \R^{d\times N}$, which proves formula \eqref{plevconv}.		
		
	\end{proof}

	\begin{rem}\label{varie}{\rm For every  $\xi\in \R^{d\times N}$ we denote  \beq\label{Qinftyfdef}
			Q_{\infty}f(\xi):=\lim_{p\to\infty}  ( Qf^p\big)^{1/p}(\xi)=\sup_{p\geq 1} \big( Qf^p\big)^{1/p}(\xi).
			\eeq
		\noindent Note that,  if $N=1$ or $d=1$,  then $Qf^p=(f^p)^{**}$  for every $p\geq 1$. Therefore, if  $f$ satisfies \eqref{coe},  by Proposition \ref{lev}, we get that 
	 $Q_\infty f=f^{lslc}$.
}
	 
	\end{rem}
\noindent In \cite{AP}  it has been introduced the class of  functions $f:\R^{d\times N}\to [0,+\infty)$  satisfying   $f=\lim\limits_{p\to\infty}(Q(f^p))^{1/p}$. They  have been referred as curl-$\infty$ quasiconvex.  If $f$ is continuous, in \cite{PNodea} it has been remarked  that any curl-$\infty$ quasiconvex function is strong Morrey quasiconvex (see \eqref{sMqcx}), while it  is currently an open question  whether the converse is true for coercive functions.  The proposition below establishes,  without further assumptions, that the supremum of strong Morrey quasiconvex  functions is itself strong Morrey quasiconvex. In particular, if \(f:\R^{d\times N}\rightarrow [0,+\infty)\) is a  Borel function then, by the very definition \eqref{Qinftyfdef},  we get that $Q_{\infty}f$ is strong Morrey quasiconvex.

	\begin{prop}\label{lev1}
	Let $I$ be a family of indices and let $(f_{\eta})_{\eta\in I}$,  be a family of strong Morrey quasiconvex functions ($f_\eta:\R^{d \times N} \to \R$ for any $\eta\in I$). Then the function $\hat f:=\sup_{\eta} f_{\eta}$ is strong Morrey quasiconvex. In particular, if \(f:\R^{d\times N}\rightarrow [0,+\infty)\) is a  Borel function then  the sequence $( (Qf^p\big)^{1/p})$  converges to the  strong Morrey quasiconvex function $Q_{\infty}f$.
	\end{prop}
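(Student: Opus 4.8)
The plan is to verify the defining inequality \eqref{sMqcx} for $\hat f:=\sup_{\eta\in I}f_\eta$ directly from the fact that each $f_\eta$ satisfies it, and then to obtain the second assertion as an immediate corollary by recognizing $Q_\infty f$ as a supremum of strong Morrey quasiconvex functions. Fix $\varepsilon>0$, $\xi\in\R^{d\times N}$ and $K>0$. We must produce a single $\delta>0$, depending only on $\varepsilon,K,\xi$, such that every $\varphi\in W^{1,\infty}(Q;\R^d)$ with $\|\nabla\varphi\|_{L^\infty(Q)}\le K$ and $\max_{x\in\partial Q}|\varphi(x)|\le\delta$ satisfies $\hat f(\xi)\le\supess_{x\in Q}\hat f(\xi+\nabla\varphi(x))+\varepsilon$.

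First I would reduce to a single index. Since $\hat f(\xi)=\sup_\eta f_\eta(\xi)$, there exists $\eta_0=\eta_0(\varepsilon,\xi)\in I$ with $f_{\eta_0}(\xi)>\hat f(\xi)-\varepsilon/2$ (if $\hat f(\xi)=-\infty$ there is nothing to prove, and if it is finite this is just the definition of supremum; the case $\hat f(\xi)=+\infty$ cannot occur since each $f_\eta$ is real-valued but $\hat f$ could still be $+\infty$ — in that degenerate case one instead picks $\eta_0$ with $f_{\eta_0}(\xi)$ as large as one likes and the argument below gives $\supess_Q\hat f(\xi+\nabla\varphi)=+\infty$ as well, so the inequality holds trivially). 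Now apply the strong Morrey quasiconvexity of the \emph{single} function $f_{\eta_0}$ with parameters $\varepsilon/2$, $\xi$, $K$: this yields $\delta=\delta(\varepsilon/2,K,\xi)>0$ — crucially not depending on the rest of the family — such that for every admissible $\varphi$,
\[
f_{\eta_0}(\xi)\le\supess_{x\in Q}f_{\eta_0}(\xi+\nabla\varphi(x))+\varepsilon/2.
\]
Since $f_{\eta_0}\le\hat f$ pointwise, the right-hand side is bounded above by $\supess_{x\in Q}\hat f(\xi+\nabla\varphi(x))+\varepsilon/2$. Combining with $f_{\eta_0}(\xi)>\hat f(\xi)-\varepsilon/2$ gives
\[
\hat f(\xi)<f_{\eta_0}(\xi)+\varepsilon/2\le\supess_{x\in Q}\hat f(\xi+\nabla\varphi(x))+\varepsilon,
\]
which is exactly \eqref{sMqcx} for $\hat f$ with the chosen $\delta$. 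This proves $\hat f$ is strong Morrey quasiconvex.

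For the second statement, recall from Remark \ref{Qgcont} that each $Qf^p$ is continuous, hence Borel, and from the discussion preceding \cite[Proposition 2.5]{BJW} (or by a direct check using Definition \ref{defqcx}) that a continuous quasiconvex function is in particular strong Morrey quasiconvex; thus each $\big(Qf^p\big)^{1/p}$ is strong Morrey quasiconvex. By \eqref{Qinftyfdef} the family $\big(\big(Qf^p\big)^{1/p}\big)_{p\ge1}$ is nondecreasing (this monotonicity is the Hölder-inequality observation already used in the proof of Proposition \ref{lev}), so it converges pointwise to its supremum $Q_\infty f=\sup_{p\ge1}\big(Qf^p\big)^{1/p}$. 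Applying the first part of the proposition with $I=\{p\in\mathbb N: p\ge1\}$ and $f_p=\big(Qf^p\big)^{1/p}$ immediately gives that $Q_\infty f$ is strong Morrey quasiconvex. I do not anticipate a serious obstacle here; the only point requiring a little care is the bookkeeping of which quantity the constant $\delta$ is allowed to depend on — the whole argument works precisely because $\delta$ inherited from $f_{\eta_0}$ depends only on $\varepsilon,K,\xi$ and not on $\eta_0$'s "position" within the family.
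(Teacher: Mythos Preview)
Your proof of the first assertion is correct and takes a genuinely different route from the paper. The paper argues indirectly via the characterization in \cite{BJW}: for each $\eta$ the supremal functional $F_\eta(u)=\supess_\Omega f_\eta(\nabla u)$ is sequentially weak* lower semicontinuous on $W^{1,\infty}(\Omega;\R^d)$ by \cite[Theorem 2.6]{BJW}; hence so is the pointwise supremum $\hat F=\sup_\eta F_\eta$; after swapping the two suprema to write $\hat F(u)=\supess_\Omega\hat f(\nabla u)$, the necessary condition \cite[Theorem 2.7]{BJW} forces $\hat f$ to be strong Morrey quasiconvex. Your argument instead verifies \eqref{sMqcx} for $\hat f$ directly, by passing to a single near-optimal index $\eta_0=\eta_0(\varepsilon,\xi)$ and inheriting $\delta$ from that one function. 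This is more elementary---it does not invoke the full equivalence between strong Morrey quasiconvexity and lower semicontinuity of the associated functional---and it makes explicit why $\delta$ can be chosen depending only on $(\varepsilon,K,\xi)$. The paper's route is shorter once the BJW machinery is taken for granted and avoids that bookkeeping.

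One small slip in your second part: you write that ``a continuous quasiconvex function is in particular strong Morrey quasiconvex; thus each $(Qf^p)^{1/p}$ is strong Morrey quasiconvex.'' But $(Qf^p)^{1/p}$ is \emph{not} quasiconvex in Morrey's sense---composition with the concave map $t\mapsto t^{1/p}$ does not preserve that notion. What one should say is that $Qf^p$ itself is strong Morrey quasiconvex (this is exactly \cite[Proposition 2.4]{BJW}, the reference the paper invokes), and then check from \eqref{sMqcx} that composing with a continuous increasing function preserves strong Morrey quasiconvexity (use continuity of $t\mapsto t^{1/p}$ at $Qf^p(\xi)$ to trade the target $\varepsilon$ for an $\varepsilon'$ at the level of $Qf^p$). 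The paper's proof glosses over this same composition step, so this is a clarification rather than a gap in your argument.
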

\begin{proof} Let $\Omega \subset \R^N$be a bounded open set with Lipschitz boundary as above. For every $\eta\in I$ the functional $$W^{1,\infty}(\Omega;\R^d) \ni u \to F_{\eta}(u):=\supess_{\Omega}f_{\eta}(\nabla u) $$ is sequentially weakly* lower semicontinuous on  $W^{1,\infty}(\Omega;\R^d)$, see \cite[Theorem 2.6]{BJW}. This implies that the functional  $W^{1,\infty}(\Omega;\R^d)\ni u \to \hat F(u):=\sup_{\eta} F_{\eta}(u)$ is  also sequentially weakly* lower semicontinuous.
Since $$\hat F(u)=\sup_{\eta} \supess_{\Omega}f_{\eta}(\nabla u) =\supess_{\Omega} \sup_{\eta} f_{\eta}(\nabla u)=\supess_{\Omega}  \hat f(\nabla u),$$ then, thanks to the necessary condition for sequentially weak* lower semicontinuity of supremal functionals in \cite[Theorem 2.7]{BJW}, we can conclude  that  $\hat f $ is strong Morrey quasiconvex.

In particular, in order to show that $Q_{\infty}f$ is strong Morrey quasiconvex,  it is sufficient to recall that, by  \cite[Proposition 2.4]{BJW}, for any $p\geq 1$ the function $Qf^p$ is strong Morrey quasiconvex.  \end{proof}
		
%
\bigskip

\begin{proof}[Proof of Theorem \ref{curlcase2}] The proof will be achieved in several steps, some of them follow along the lines of \cite[Proof of Theorem 4.2]{AP}. First we prove that for every $p>N$, the relaxed functional $\Gamma_{L^\infty}(F_p)$  admits an integral representation. In the second step we introduce the function $f_\infty$ appearing in \eqref{Finfty2} and obtain the comparison in \eqref{comparison}. Then steps 3. and 4. are devoted to the proof of $\Gamma$-liminf and $\Gamma$-limsup inequalities, respectively.\\
\begin{itemize}
\item[Step 1.]  For every $p\geq 1$ let   $\Gamma_{L^{\infty}} (F_p):C(\bar \Omega;\R^d)\to \overline{\R}$ be the lower semicontinuous envelope  of the functional $F_p$ in \eqref{curl2} with respect to  the   uniform convergence.
Since the family  $(F_p)_{p\geq 1}$ is increasing,  by Proposition \ref{gammaprop}(3)-(4), we have that 
 \begin{equation}\label{disugFtilde2}
 \Gamma(L^{\infty})\hbox{-}\lim_{p\to\infty} F_p = \Gamma(L^{\infty})\hbox{-}\
\lim_{p\to\infty}  \Gamma_{L^{\infty}} (F_p)  =\sup_{p\geq 1} \Gamma_{L^{\infty}} (F_p).
\end{equation}
 Let 
 $G_p:W^{1,p}(\Omega,\R^d)\to  \overline{\R}$ be the functional given by $$ G_p(u):=\left(
\int_\Omega  f^p(x,\nabla u(x))dx \right)^{1/p}.$$ Then, by Theorem \ref{relaxint2}, there exists a Carath\'eodory function $\tilde{ f^p}$, quasiconvex in the second variable, such that 
\beq\label{tilde>Q}
\tilde f^p(x,\xi) \geq Qf^p(x,\xi) \hbox{ for a.e. }x \in \Omega \hbox{ and for every }\xi \in \R^{d\times N},
\eeq and
 $$\Gamma_{w_{seq}} (G_p)(u):=\left(
\int_\Omega  \tilde{f^p}(x,\nabla u(x))dx \right)^{1/p}$$
for every $u \in W^{1,p}(\Omega;\R^d)$.
Now we show that for every $p>N$    $\Gamma_{L^{\infty}} (F_p)$ coincides with the  functional $\phi_p:C(\bar \Omega;\R^d)\to  \overline{\R}$  given by
  $$
\phi_p(u):=\left\{\begin {array}{cl} \displaystyle{\left(
\int_\Omega  \tilde {f^p}(x,\nabla u(x))dx \right)^{1/p}}
 & \hbox{if } \, u\in W^{1,p}(\Omega,\R^d),\\
+\infty  & \hbox{otherwise},
\end{array}\right.
$$
 In order to show that $ \phi_p\leq\Gamma_{L^{\infty}} (F_p)$ we notice that for every $p>1$ the functional $\phi_p$ is  lower semicontinuous  on $C(\overline \Omega;\R^d)$  with respect to  the  uniform convergence. In fact, let  $(u_n)\subseteq C(\overline \Omega,\R^d)$ be such that $u_n\to u$  uniformly and $\liminf\limits_{n\to \infty} \phi_p(u_n)<+\infty$. Without relabelling, take a subsequence such that $\lim\limits_{n\to \infty}\phi_p(u_n)=\liminf\limits_{n\to \infty}\phi_p(u_n)$.
Thanks to the coercivity assumption (\ref{fcrescita2}), we have that the sequence $(u_n)$ is bounded in $W^{1,p}(\Omega,\R^d)$. Therefore,  up to a not relabelled subsequence, $(u_n)$  weakly converges to $u$ in $W^{1,p}(\Omega,\R^d)$. 
Then $$
\phi_p(u)=\Gamma_{w_{seq}} (G_p)(u)\leq \liminf_{n\to \infty} \Gamma_{w_{seq}} (G_p)(u_n)=\liminf_{n\to \infty} \phi_p(u_n).$$ 
 Since $\phi_p\leq F_p$ on $C (\overline \Omega,\R^d)$ and  $\phi_p$ is  lower semicontinuous   with respect to  the   uniform convergence,  we obtain that  \beq\label{unversox}\phi_p(u)\leq \Gamma_{L^{\infty}} (F_p)(u) \quad  \forall \ u\in C(\overline \Omega,\R^d).\eeq
On the other hand,  for every $p>N$ the functional $\Gamma_ {L^{\infty}}(F_p)$ is sequentially  lower semicontinuous on $W^{1,p}(\Omega,\R^d)$  with respect to  the  weak convergence of $W^{1,p}(\Omega,\R^d)$.
In fact, if  $(u_n)\subseteq W^{1,p}(\Omega,\R^d)$ is such that $u_n\rightharpoonup u$ weakly in  $W^{1,p}(\Omega,\R^d)$ then,  thanks to Rellich-Kondrachov Theorem,  we have that $u_n\in C (\overline \Omega,\R^d)$ and $u_n\to u$ uniformly. In particular it follows that $\Gamma_{L^{\infty}} (F_p)(u)\leq \liminf\limits_{n\to \infty} \Gamma_{L^{\infty}} (F_p)(u_n).$\\

\noindent Since $$\Gamma_{L^{\infty}} (F_p)\leq F_p=G_p \quad  \hbox{on } W^{1,p}(\Omega,\R^d)$$ we get that  for every $p>N$ \beq\label{altroversox}\Gamma_{L^{\infty}} (F_p)(u)\leq \Gamma_{w_{seq}} (G_p)(u)=\phi_p(u) \quad \forall \ u\in W^{1,p}(\Omega,\R^d).\eeq
Inequalities \eqref{unversox} and \eqref{altroversox} imply that for every $p>N$
$$\Gamma_{L^{\infty}} (F_p)(u)=\phi_p(u)=\left(
\int_\Omega  \tilde{f^p}(x,\nabla u(x))dx \right)^{1/p} \quad \forall \ u\in W^{1,p}(\Omega,\R^d).$$
%
If  we  show that   
 $\Gamma_{L^{\infty}} (F_p)(u)<+\infty $ if and only if  $u\in  W^{1,p}(\Omega,\R^d)$ then we can conclude that   $\Gamma_{L^{\infty}} (F_p)=\phi_p$ on $C(\overline \Omega,\R^d)$ for every $p>N$.
 In fact  if  $u\in C(\overline \Omega,\R^d)$ is such that $\Gamma_{L^{\infty}} (F_p)(u)<+\infty$ then there exists a sequence
$(u_n)\subseteq C(\overline \Omega,\R^d)$ such that $u_n\to u$ uniformly and $\lim\limits_{n\to \infty} F_p(u_n)=\Gamma_{L^{\infty}} (F_p)(u)<+\infty$.
 Thanks to the coercivity assumption (\ref{fcrescita2}), we have  that the sequence $(u_n)$  is bounded in $W^{1,p}(\Omega,\R^d)$ and, up to a subsequence, weakly converges to $u$ in $W^{1,p}(\Omega,\R^d)$ when $p>1$. In particular $u\in W^{1,p}(\Omega,\R^d)$. The viceversa is trivial.\\
\item[Step 2.]   If $p<q$ then, by applying  H\"older's inequality,  we have that $F_p\leq (\L^N(\Omega))^{1 -\frac p q} F_q.$
In particular $$\Gamma_{L^{\infty}} (F_p)\leq (\L^N(\Omega))^{1 -\frac p q} \Gamma_{L^{\infty}} (F_q).$$
Since  for every $p\geq 1$ $\tilde{ f^p}$ is a  Carath\'edory function, we deduce that 
$\tilde{ f^p}(x,\xi)\leq (\L^N(\Omega))^{1 -\frac p q} \tilde{ f^q}(x,\xi)$ for a.e. $x\in \Omega$ and $\xi\in  \R^{d\times N}$.
Then, set 
\beq\label{finftydef}f_{\infty}(x,\xi):=\sup_{p\geq 1} (\tilde{ f^p})^{ 1/p}(x,\xi),
\eeq we get that  $f_{\infty}$ is $\L^N \otimes \B_{d\times N}$-measurable function, being the supremum of Carath\'eodory functions, and for a.e. $x\in \Omega$ and $\xi\in  \R^{d\times N}$  $$f_{\infty}(x,\xi)=\lim_{p\to\infty}  (\tilde{ f^p})^{1/ p}(x,\xi).$$ Moreover, thanks to  Proposition \ref{lev1},   for a.e. fixed $x\in \Omega$ the function $f_{\infty}(x,\cdot)$ is strong Morrey quasiconvex.  Finally, by \eqref{tilde>Q}, it results that $Qf^p(x,\xi)\leq \tilde{ f^p}(x,\xi)$ for a.e. $x\in \Omega$ and $\xi\in  \R^{d\times N}$. This implies that   $Q_{\infty}(x,\xi)\leq f_{\infty}(x,\xi)$ for a.e. $x\in \Omega$ and $\xi\in  \R^{d\times N}$.
\item[Step 3.] Now we show the $\Gamma$-liminf inequality, that is 
\beq\label{dbound2}
\Gamma(L^\infty)\hbox{-}\lim_{p\to\infty}    F_p \geq \bar F (u) \quad \forall u\in C(\overline \Omega;\R^d).
\eeq
Without loss of generality   we can consider the case when  $u\in C(\overline \Omega;\R^d)$ is
such that $\sup_{p\geq 1} \Gamma_{L^{\infty}} (F_p)(u )<+\infty$. Thanks to the coercivity assumption \eqref{fcrescita2}, we have that $\sup_{p\geq 1} ||u||_{W^{1,p}}=:M<+\infty.$
It follows that $u\in W^{1,\infty}(\Omega;\mathbb R^d)$  and by \eqref{finftydef} and \eqref{fcrescita2} 
$$
\bar F(u)= \supess_{x\in \Om} f_{\infty}(x,\nabla u(x))\leq \beta (1+M)<+\infty.$$
Therefore, for every fixed $\ee>0$,  there exists a measurable set $B_{\ee}\subset \Omega$  such that $\L^N(B_{\ee})>0$ and 
$$ \supess_{x\in\Omega}f_{\infty} (x,\nabla u(x))\leq f_{\infty}(x,\nabla u(x))+\ee
   $$
  for every $x\in B_{\ee}$.
This implies
  $$ \supess_{x\in\Omega} f_{\infty}(x,\nabla u(x))\L^N(B_{\ee})\leq \int_{B_{\ee}}f_{\infty}(x,\nabla u(x))dx+\ee \L^N(B_{\ee})  .
   $$
By Beppo Levi's Theorem, and H\"older's inequality  we obtain 
\begin{eqnarray*} \supess_{x\in\Omega}f_{\infty} (x,\nabla u(x))\L^N(B_{\ee})&\leq &\lim_{p\to\infty}  
\int_{B_{\ee}}  (\tilde{f^p})^{1/ p}(x,\nabla u(x)dx +\ee \L^N(B_{\ee})\\
& \leq& \lim_{p\to\infty} 
\Big(\int_{B_{\ee}}  (\tilde{f^p})(x,\nabla u(x))dx\Big)^{1/ p}\L^N(B_{\ee})^{1-1 /p} +\ee \L^N(B_{\ee}).
\end{eqnarray*} 
It follows that
\begin{eqnarray}\label{Ftildephip} \supess_{x\in\Omega} f_{\infty}(x,\nabla u(x))&\leq &\lim_{p\to\infty}   \Gamma_{L^{\infty}} (F_p) (u)
\L^N(B_{\ee})^{-1/ p} +\ee=  \sup_{p\geq 1} \ \Gamma_{L^{\infty}} (F_p) (u)+\ee\,.
\end{eqnarray} 
By passing to the limit when $\ee\to 0$ and taking into account  \eqref{disugFtilde2}, we get \eqref{dbound2}.

\item[Step 4.]  Now we show the $\Gamma$-limsup inequality,  that is 
\beq\label{upboundx}
\Gamma(L^\infty)\hbox{-}\lim_{p\to\infty}    F_p(u) \leq \bar F (u) \quad \forall u\in C(\overline \Omega;\R^d).
\eeq
Without loss of generality, we consider the case when $u\in W^{1,\infty}(\Omega;\mathbb R^d)$.
Then $$\Gamma_{L^{\infty}} (F_p)(u)= \Bigl(\int_\Omega  \tilde {f^p}(x,\nabla u(x))dx \Bigr)^{1/p}\leq \L^N(\Omega)^{1/ p}\supess_{x\in\Omega}  f_{\infty} (x,\nabla u(x)).$$
In particular, it follows 
\begin{equation} \label{disugFtildex} 
\sup_{p\geq 1} \Gamma_{L^{\infty}} (F_p) (u)\le \lim_{p\to\infty}  \L^N(\Omega)^{1/ p}  \bar F (u)=\bar F (u),
\end{equation}
for every $u \in W^{1,\infty}(\Omega;\R^d)$.
\noindent By \eqref{disugFtildex} and \eqref{disugFtilde2} we get \eqref{upboundx}.

%
%

%
%
%
%
 \end{itemize}
Putting together steps 3. and 4. we conclude the proof.

\end{proof}

\begin{rem}\label{Lpd1}{\rm We note the following facts.
			
		\begin{enumerate}
		\item If $f(x,\cdot)$ is continuous  for a.e. $x\in \Omega$,  Theorem \ref{curlcase2} gives the same representation result for the $\Gamma$-limit shown in \cite{PNodea}.

\item If the supremand  $f(x,\cdot)$ is upper semicontinuous for a.e. $x \in \Omega$, then $f_{\infty}(x,\cdot)=Q_{\infty}f(x,\cdot)$
by Remark \ref{remBut} and \eqref{finftydef}. The same conclusion holds when $f\equiv f(\xi)$.

\noindent In addiction, if $f(x,\cdot)$ is upper semicontinuous and level convex for a.e. $x\in \Omega$, then, in view of \eqref{plevconv}, \eqref{Finfty2} can be specialized, since $$f_{\infty}(x,\cdot)=Q_{\infty}f(x,\cdot)=f^{ls}(x,\cdot) \;\;\hbox{ for a.e. }x \in \Omega.$$ 
The same conclusion holds when $f\equiv f(\xi)$ is level convex.

			\item If $N=1$ or $d=1$, then $Q_\infty f(x,\cdot)=f^{lslc}(x,\cdot)$ for a.e. $x \in \Omega$, (see Remark \ref{varie}). Consequently, by the above arguments, if $f(x,\cdot)$ is upper semicontinuous or $f\equiv f(\xi)$ then 
			we get that $$f_{\infty}(x,\cdot)= Q_\infty f(x,\cdot)= f^{lslc}(x,\cdot)\;\;\hbox{ for a.e. }x \in \Omega.$$ 

			\item In the case when $f_\infty(x,\cdot)= f^{lslc}(x,\cdot)$, the proof of the $\Gamma$-liminf inequality can be simplified.  Indeed
			$f^{ lslc}$ satisfies the assumptions of \cite[Theorem 3.1]{CDPP} and $f^{lslc}\leq f$, then for every $u \in W^{1,\infty}(\Omega;\mathbb R^d)$
			\begin{align}\label{linflslc}
			\supess_{ x \in \Omega}f^{lslc}(x,\nabla u(x))\leq \Gamma(L^{\infty})\,\text{-} \lim_{p\to\infty}\left(\int_{\Omega}(f^{ lslc}(x,\nabla u(x)))^pdx\right)^{1/p}\\
			\nonumber \leq\Gamma(L^{\infty})\text{-}\lim_{p\to\infty} \left(\int_{\Omega}(f^p(x,\nabla u(x)))dx\right)^{1/p}.
			\end{align}
			It is also worth to note that \eqref{linflslc} holds without imposing any growth from above on $f$.

%
\item  We observe that if $f\equiv f(\xi)$, under the weaker  assumption that  $f$ is a Borel function locally bounded and satisfying (up to a constant)  \eqref{coe},
 we can show that the family  of functionals
 $\F_p: C(\bar \Omega;\R^d)\to [0,+\infty]$    given by 
	\beq\label{curl3} \F_p(u):=\left\{\begin {array}{cl}
	\displaystyle \left( \int_{\Om} f^p(\nabla u(x))dx \right)^{1/p}
	&  \hbox{if } \, u\in W^{1,\infty}(\Omega;\R^d),\\
	+\infty  & \hbox{otherwise}
\end{array}\right.
\eeq
$\Gamma(L^\infty)$-converges to the functional 
$\F: C(\bar \Omega;\R^d)\to [0,+\infty]$    given by 
	\beq\nonumber\F(u):=\left\{\begin {array}{cl}
	\displaystyle  \supess_{\Om} Q_{\infty}f(\nabla u(x))
	&  \hbox{if } \, u\in W^{1,\infty}(\Omega;\R^d),\\
	+\infty  & \hbox{otherwise}.
\end{array}\right.
\eeq
Indeed, in this case,    it is sufficient to apply the relaxation result for integral functionals on Sobolev space with respect to the uniform convergence (see \cite[Theorem 9.1]{Dac}) to    get that $$\Gamma_{L^{\infty}} (\F_p)(u)=\left\{\begin {array}{cl}
	\displaystyle \left( \int_{\Om} Qf^p(\nabla u(x))dx \right)^{1/p}
	&  \hbox{if } \, u\in W^{1,\infty}(\Omega;\R^d),\\
	+\infty  & \hbox{otherwise}.
\end{array}\right.
$$
Then the proof develops along the lines of the one of Theorem \ref{curlcase2} and takes into account  the identity $f_{\infty}=Q_{\infty}f$.
			
%
%

\item For the sake of completeness, with the same notations of Theorem \ref{curlcase2}, if $N$ or $d=1$,  one can assume $\Omega$ to be also convex and $f$ to be only Borel measurable  to obtain a representation formula for $\Gamma_{L^1} (G_p)$, see \cite[Theorem 3.10]{DeA}.
			In particular, one obtains, that
			$$\Gamma_{L^1} (G_p)(u)=\left(
			\int_\Omega  (f^p)^{**}(\nabla u(x))dx \right)^{1/p} \quad \forall \ u\in W^{1,p}(\Omega,\R^d).$$  Then, assuming also that $f$ satisfies \eqref{coe}, \eqref{Finfty2} is obtained in the same way as before, relying on the equality $\Gamma_{L^\infty}(F_p)=\phi_p=\Gamma_{L^1}(G_p)$ in $W^{1,p}(\Omega;\R^d)$.
%

		\end{enumerate}
		\color{black}
	}
\end{rem}

\textbf{Acknowledgements} 
 EZ is indebted with Dipartimento di Matematica of University of Ferrara for its kind support and hospitality. 

Both the authors are members of GNAMPA-INdAM, whose support is gratefully acknowledged.

\end{document}